\newcommand{\afrac}[2]{\genfrac{}{}{0pt}{}{#1}{#2}}
\newcommand{\F}{\mathbf{F}}
\newcommand{\Fp}{\mathbf{F}_p}
\newcommand{\barFp}{\overline{\F}_p}
\newcommand{\CC}{\mathbf{C}}
\newcommand{\C}{\mathbf{C}}
\newcommand{\HH}{\mathbf{H}}
\newcommand{\Q}{\mathbf{Q}}
\newcommand{\ZZ}{\mathbf{Z}}
\newcommand{\Z}{\mathbf{Z}}
\newcommand{\R}{\mathbf{R}}
\newcommand{\M}{\textsf{M}}
\newcommand{\Zm}{\Z/m\Z}
\newcommand{\mf}{\mathfrak}
\newcommand{\Wf}{\mathfrak{f}}
\renewcommand{\O}{\mathcal{O}}
\newcommand{\gp}{\mathfrak p}
\newcommand{\isar}{\ \smash{\mathop{\longrightarrow}\limits^{\thicksim}}\ }
\def\mapright#1{\ \smash{\mathop{\longrightarrow}\limits^{#1}}\ }
\def\Gal{\operatorname{Gal}}
\def\End{\operatorname{End}}
\def\Pic{\operatorname{cl}}
\def\tr{\operatorname{tr}}
\def\Li{\operatorname{Li}}
\def\SL{\operatorname{SL}}
\def\GL{\operatorname{GL}}
\def\disc{\operatorname{disc}}
\def\Ell{\text{\rm Ell}}
\def\mfl{{\mathfrak{l}}}
\newcommand{\kron}[2]{\left(\frac{#1}{#2}\right)}
\newcommand{\inkron}[2]{\genfrac {(}{)}{0.9pt}{}{#1}{#2}}
\renewcommand{\vec}[1]{{\boldsymbol{#1}}}
\newcommand{\idx}[2]{[#1\hspace{1.5pt}\text{\rm :}\hspace{2pt}#2]}
\newcommand{\eps}{\varepsilon}
\newcommand{\invexp}{\text{-1}}
\newcommand{\algstart}[2]{\smallskip\noindent{\bf Algorithm} #1. \emph{#2}\begin{enumerate}}
\newcommand{\algend}{\end{enumerate}\vspace{4pt}}
\newcommand{\algitem}{\vspace{2pt}\item}
\newtheorem{theorem}{Theorem}[section]
\newtheorem*{thm1}{Theorem~1} 
\newtheorem{lemma}[theorem]{Lemma}
\newtheorem{definition}[theorem]{Definition}
\newtheorem{example}[theorem]{Example}
\theoremstyle{definition}
\newtheorem{algorithm}[theorem]{Algorithm}
\renewcommand\labelenumi{\theenumi.}
\renewcommand\labelenumii{\theenumii.}
\begin{document}

\title[Modular polynomials via isogeny volcanoes]{Modular polynomials via isogeny volcanoes}

\author{Reinier Br\"oker, Kristin Lauter, and Andrew V. Sutherland}
\address{Brown University, Box 1917, 151 Thayer Street, Providence, Rhode Island  02192}
\email{reinier@math.brown.edu}
\address{Microsoft Research, One Microsoft Way, Redmond, Washington 98052}
\email{klauter@microsoft.com}
\address{Massachusetts Institute of Technology, Cambridge, Massachusetts 02139}
\email{drew@math.mit.edu}

\subjclass[2000]{Primary 11Y16 ; Secondary  11G15, 11G20, 14H52}
\date{}
\dedicatory{}

\begin{abstract}
We present a new algorithm to compute the classical modular polynomial 
$\Phi_l$ in the rings $\ZZ[X,Y]$ and $(\Zm)[X,Y]$, for a prime $l$ and any positive integer $m$.
Our approach uses the graph of $l$-isogenies to efficiently compute $\Phi_l\bmod p$ 
for many primes $p$ of a suitable form, and then applies the Chinese Remainder Theorem (CRT).
Under the Generalized Riemann Hypothesis (GRH), we achieve an
expected running time of $O(l^3 (\log l)^3\log\log l)$,
and compute $\Phi_l\bmod m$ using $O(l^2(\log l)^2+ l^2\log m)$ space.
We have used the new algorithm to compute $\Phi_l$ with $l$ over 5000,
and $\Phi_l\bmod m$ with $l$ over 20000.
We also consider several modular functions $g$ for which $\Phi_l^g$ is smaller
than $\Phi_l$, allowing us to handle $l$ over 60000. 
\end{abstract}

\maketitle
\section{Introduction}
For a prime~$l$, the classical modular polynomial $\Phi_l$ is the minimal 
polynomial of the function $j(lz)$ over the field $\CC(j)$, where $j(z)$ is 
the modular $j$-function. The polynomial~$\Phi_l$ parametrizes elliptic 
curves $E$ together with an isogeny $E \rightarrow E'$ of degree~$l$. From
classical results, we know that $\Phi_l$ lies in the ring $\ZZ[X,Y]$ and satisfies
$\Phi_l(X,Y) = \Phi_l(Y,X)$, with degree $l+1$ in both variables \cite[\S69]{Weber:Algebra}.

The fact that the moduli interpretation of $\Phi_l$ remains valid modulo
primes $p \not = l$ was crucial to the improvements made by Atkin and Elkies
to Schoof's point-counting algorithm \cite{Elkies:AtkinBirthday,Schoof:ECPointCounting2}.  More
recently, the polynomials $\Phi_l\bmod p$ have been used to compute Hilbert class polynomials~\cite{Belding:HilbertClassPolynomial,Sutherland:HilbertClassPolynomials}, and to determine the endomorphism ring of an elliptic curve over a finite field~\cite{BissonSutherland:Endomorphism}.  Explicitly computing $\Phi_l$ is notoriously difficult, primarily due to its large size.  As shown in~\cite{CohenPaula:ModularPolynomials}, the logarithmic height of its largest coefficient is $6l\log l+O(l)$, thus its total size is 
\begin{equation}\label{spacebound}
O(l^3 \log l).
\end{equation}
As this bound suggests, the size of $\Phi_l$ grows quite rapidly; the binary representation of $\Phi_{79}$ already exceeds one megabyte, and $\Phi_{659}$ is larger than a gigabyte.

The polynomial $\Phi_l$ can be computed by comparing coefficients in the
Fourier expansions of $j(z)$ and $j(lz)$, an approach considered by several
authors~\cite{Blake:ModularPolynomials, Elkies:AtkinBirthday,
Herrmann:FourierCoefficients, Ito:ModularEquation, Kaltofen:ModularEquation,
LMMS:PointCounting, Morain:PointCounting}.  As detailed in
\cite{Blake:ModularPolynomials}, this only requires integer arithmetic, and may
be performed modulo~$p$ for any prime $p > 2l+2$.  The time to compute $\Phi_l
\bmod p$ is then $O(l^{3+\eps}(\log p)^{1+\varepsilon})$, and for a sufficiently large
$p$ this yields an $O(l^{4+\varepsilon})$ time algorithm to compute $\Phi_l$ over
$\ZZ$.  Alternatively (and preferably), one computes $\Phi_l$ modulo several
smaller primes and applies the Chinese Remainder Theorem, as suggested in
\cite{Blake:ModularPolynomials,Herrmann:FourierCoefficients,
LMMS:PointCounting, Morain:PointCounting}.

An alternative CRT-based approach appears in~\cite{CharlesLauter:ModPoly}.
This algorithm uses isogenies between supersingular elliptic curves defined
over a finite field, and computes $\Phi_l\bmod p$ in time $O(l^{4+\eps}(\log
p)^{2+\eps} + (\log p)^{4+\eps}$), under the GRH.

In~\cite{Enge:ModularPolynomials}, Enge uses interpolation and fast
floating-point evaluations to compute $\Phi_l\in\ZZ[X,Y]$ in time $O(l^3(\log
l)^{4+\eps})$, under reasonable heuristic assumptions.  The complexity of this
method is nearly optimal, quasi-linear in the size of $\Phi_l$.  However, most
applications actually use $\Phi_l$ in a finite field $\mathbf{F}_{p^n}$, and
$\Phi_l\bmod p$ may be much smaller than $\Phi_l$.  In general,  Enge's
algorithm can compute $\Phi_l$ and reduce it modulo~$p$ much faster than either
of the methods above can compute $\Phi_l\bmod p$, but this may use an excessive
amount of space.  For large $l$ this approach becomes impractical, even when
$\Phi_l\bmod p$ is reasonably small.

Here we present a new method to compute $\Phi_l$, either over the integers or
modulo an arbitrary positive integer~$m$, including $m \le l$.  Our algorithm is both
asymptotically and practically faster than alternative methods, and achieves
essentially optimal space complexity.  More precisely, we prove the following
result.

\begin{thm1}\label{main-thm}
Let $l$ denote an odd prime and $m$ a positive integer.
Algorithm~\ref{alg2} correctly computes $\Phi_l\in(\Zm)[X,Y]$.
Under the GRH, it runs in expected time
$$
O(l^3 \log ^3 l\log\log l),
$$
using $O(l^2\log lm)$ expected space.
\end{thm1}

To compute $\Phi_l$ over $\Z$, we choose a modulus $m$ that is large enough to uniquely
determine the coefficients, via an explicit height bound proven in
\cite{BrokerSutherland:PhiHeightBound}.  In general, we may assume $\log m=O(l\log l)$,
since otherwise $\Phi_l$ and $\Phi_l\bmod m$ are effectively the same (hence the
time bound does not depend on $m$).

Our algorithm is of the \emph{Las
Vegas} type, a probabilistic algorithm whose output is unconditionally correct;
the GRH is only used to analyze its running time.  We have used it to compute
$\Phi_l$ for all $l<3600$, and many larger $l$ up to $5003$.  The largest
previous computation of which we are aware has $l=1009$.  Working
modulo $m$ we can go further; we have computed $\Phi_l$ modulo a 256-bit
integer $m$ with $l=20011$.

Applications that rely on $\Phi_l$ can often improve their running times by
using alternative modular polynomials that have smaller coefficients.  Our
algorithm can be adapted to compute polynomials $\Phi_l^g$ relating $g(z)$ and
$g(lz)$, for modular functions~$g$ that share certain properties with~$j$.  This
includes the cube root $\gamma_2$ of $j$, and we are then able to compute
$\Phi_l\bmod m$ more quickly by reconstructing it from $\Phi_l^{\gamma_2}\bmod m$,
capitalizing on a suggestion in \cite{Elkies:AtkinBirthday}.  Other examples
include simple and double eta-quotients, the Atkin functions, and the
Weber $\Wf$-function. The last is especially
attractive, since the modular polynomials for $\Wf$ are approximately 1728 times
smaller than those for $j$.  This has allowed us to compute modular polynomials
$\Phi_l^{\Wf}$ with $l$ as large as 60013. 

The outline of this article is as follows. In Section~2 we give a rough 
overview of our new algorithm. The theory behind the algorithm is presented
in Sections~3--5. We present the algorithm, prove its correctness and 
analyze its runtime in Section~6. Section~7 deals with modular polynomials
for modular functions other than $j$, and a final Section~8 contains computational
results.

\section{Overview}
\noindent
Our basic strategy is a standard CRT approach: we compute $\Phi_l\bmod p$ for various primes~$p$ and use the Chinese Remainder Theorem to 
recover $\Phi_l\in\Z[X,Y]$.
Alternatively, the explicit CRT (mod $m$) allows us to directly compute $\Phi_l\in(\Zm)[X,Y]$, via~\cite[Thm.~3.1]{Bernstein:ModularExponentiation}.
By applying the algorithm of~\cite[\S6]{Sutherland:HilbertClassPolynomials}, this can be accomplished in $O(l^2\log lm)$ space, even though the total size of all the $\Phi_l\bmod p$ is $O(l^3\log l)$.

Our method for computing $\Phi_l\bmod p$ is new, and applies only to certain primes~$p$.  
Strategic prime selection has been used effectively in other CRT-based 
algorithms, such as~\cite{Sutherland:HilbertClassPolynomials}, and it is especially helpful here.
Working in the finite field~$\Fp$, we select $l+2$ distinct values $j_i$, compute $\Phi_l(X,j_i)\in\Fp[X]$ for each, and then interpolate the coefficients of $\Phi_l\in (\Fp[Y])[X]$ as polynomials in $\Fp[Y]$.
The key lies in our choice of $p$, which allows us to select particular interpolation points that greatly facilitate the computation.
We are then able to compute $\Phi_l\bmod p$ in expected time
\begin{equation}\label{modpbound}
O(l^2(\log p)^3\log\log p).
\end{equation}
In contrast to the methods above, this is quasi-linear in the size of $\Phi_l\bmod p$.

Our algorithm exploits the structure of the $l$-isogeny graph $G_l$ defined on the set of $j$-invariants of elliptic curves over $\Fp$.
Each edge in this graph corresponds to an $l$-isogeny; the edge $(j_1,j_2)$ is present if and only if $\Phi_l(j_1,j_2)=0$.
As described in~\cite{Fouquet:IsogenyVolcanoes,Kohel:thesis}, the ordinary components of this graph have a particular structure known as an $l$-\emph{volcano}.
Depicted in Figure 1 are a set of four $l$-volcanoes, each with two levels: the \emph{surface} (at the top), and the \emph{floor} (on the bottom).
Note that each vertex $j_i$ on the surface has $l+1$ neighbors, these are the roots of $\Phi_l(X,j_i)\in\Fp[X]$, and there are at least $l+2$ such $j_i$.
\smallskip

\begin{figure}[htp]
\begin{tikzpicture}
\draw (-4.5,0) ellipse (1 and 0.1);
\draw (-1.5,0) ellipse (1 and 0.1);
\draw (1.5,0) ellipse (1 and 0.1);
\draw (4.5,0) ellipse (1 and 0.1);
\draw (-4.5,0.1) -- (-4.6,-0.06);
\draw (-4.5,0.1) -- (-4.56,-0.06);
\draw (-4.5,0.1) -- (-4.52,-0.06);
\draw (-4.5,0.1) -- (-4.48,-0.06);
\draw (-4.5,0.1) -- (-4.44,-0.06);
\draw (-4.5,0.1) -- (-4.4,-0.06);
\draw[fill=red] (-4.5,0.1) circle (0.04);
\draw (-1.5,0.1) -- (-1.6,-0.05);
\draw (-1.5,0.1) -- (-1.56,-0.05);
\draw (-1.5,0.1) -- (-1.52,-0.05);
\draw (-1.5,0.1) -- (-1.48,-0.05);
\draw (-1.5,0.1) -- (-1.44,-0.05);
\draw (-1.5,0.1) -- (-1.4,-0.05);
\draw[fill=red] (-1.5,0.1) circle (0.04);
\draw (1.5,0.1) -- (1.6,-0.05);
\draw (1.5,0.1) -- (1.56,-0.05);
\draw (1.5,0.1) -- (1.52,-0.05);
\draw (1.5,0.1) -- (1.48,-0.05);
\draw (1.5,0.1) -- (1.44,-0.05);
\draw (1.5,0.1) -- (1.4,-0.05);
\draw[fill=red] (1.5,0.1) circle (0.04);
\draw (4.5,0.1) -- (4.60,-0.06);
\draw (4.5,0.1) -- (4.56,-0.06);
\draw (4.5,0.1) -- (4.52,-0.06);
\draw (4.5,0.1) -- (4.48,-0.06);
\draw (4.5,0.1) -- (4.44,-0.06);
\draw (4.5,0.1) -- (4.4,-0.06);
\draw[fill=red] (4.5,0.1) circle (0.04);
\draw (-5,-0.1) -- (-5.35,-0.7);
\draw[fill=red] (-5.35,-0.7) circle (0.04);
\draw (-5,-0.1) -- (-5.21,-0.7);
\draw[fill=red] (-5.21,-0.7) circle (0.04);
\draw (-5,-0.1) -- (-5.07,-0.7);
\draw[fill=red] (-5.07,-.7) circle (0.04);
\draw (-5,-0.1) -- (-4.93,-0.7);
\draw[fill=red] (-4.93,-0.7) circle (0.04);
\draw (-5,-0.1) -- (-4.79,-0.7);
\draw[fill=red] (-4.79,-0.7) circle (0.04);
\draw (-5,-0.1) -- (-4.65,-0.7);
\draw[fill=red] (-4.65,-0.7) circle (0.04);
\draw[fill=red] (-5,-0.1) circle (0.04);

\draw (-4,-0.1) -- (-4.35,-0.7);
\draw[fill=red] (-4.35,-0.7) circle (0.04);
\draw (-4,-0.1) -- (-4.21,-0.7);
\draw[fill=red] (-4.21,-0.7) circle (0.04);
\draw (-4,-0.1) -- (-4.07,-0.7);
\draw[fill=red] (-4.07,-0.7) circle (0.04);
\draw (-4,-0.1) -- (-3.93,-0.7);
\draw[fill=red] (-3.93,-0.7) circle (0.04);
\draw (-4,-0.1) -- (-3.79,-0.7);
\draw[fill=red] (-3.79,-0.7) circle (0.04);
\draw (-4,-0.1) -- (-3.65,-0.7);
\draw[fill=red] (-3.65,-0.7) circle (0.04);
\draw[fill=red] (-4,-0.1) circle (0.04);

\draw (-2,-0.1) -- (-2.35,-0.7);
\draw[fill=red] (-2.35,-0.7) circle (0.04);
\draw (-2,-0.1) -- (-2.21,-0.7);
\draw[fill=red] (-2.21,-0.7) circle (0.04);
\draw (-2,-0.1) -- (-2.07,-0.7);
\draw[fill=red] (-2.07,-0.7) circle (0.04);
\draw (-2,-0.1) -- (-1.93,-0.7);
\draw[fill=red] (-1.93,-0.7) circle (0.04);
\draw (-2,-0.1) -- (-1.79,-0.7);
\draw[fill=red] (-1.79,-0.7) circle (0.04);
\draw (-2,-0.1) -- (-1.65,-0.7);
\draw[fill=red] (-1.65,-0.7) circle (0.04);
\draw[fill=red] (-2,-0.1) circle (0.04);

\draw (-1,-0.1) -- (-1.35,-0.7);
\draw[fill=red] (-1.35,-0.7) circle (0.04);
\draw (-1,-0.1) -- (-1.21,-0.7);
\draw[fill=red] (-1.21,-0.7) circle (0.04);
\draw (-1,-0.1) -- (-1.07,-0.7);
\draw[fill=red] (-1.07,-0.7) circle (0.04);
\draw (-1,-0.1) -- (-0.93,-0.7);
\draw[fill=red] (-0.93,-0.7) circle (0.04);
\draw (-1,-0.1) -- (-0.79,-0.7);
\draw[fill=red] (-0.79,-0.7) circle (0.04);
\draw (-1,-0.1) -- (-0.65,-0.7);
\draw[fill=red] (-0.65,-0.7) circle (0.04);
\draw[fill=red] (-1,-0.1) circle (0.04);

\draw (1,-0.1) -- (1.35,-0.7);
\draw[fill=red] (1.35,-0.7) circle (0.04);
\draw (1,-0.1) -- (1.21,-0.7);
\draw[fill=red] (1.21,-0.7) circle (0.04);
\draw (1,-0.1) -- (1.07,-0.7);
\draw[fill=red] (1.07,-0.7) circle (0.04);
\draw (1,-0.1) -- (0.93,-0.7);
\draw[fill=red] (0.93,-0.7) circle (0.04);
\draw (1,-0.1) -- (0.79,-0.7);
\draw[fill=red] (0.79,-0.7) circle (0.04);
\draw (1,-0.1) -- (0.65,-0.7);
\draw[fill=red] (0.65,-0.7) circle (0.04);
\draw[fill=red] (1,-0.1) circle (0.04);

\draw (2,-0.1) -- (2.35,-0.7);
\draw[fill=red] (2.35,-0.7) circle (0.04);
\draw (2,-0.1) -- (2.21,-0.7);
\draw[fill=red] (2.21,-0.7) circle (0.04);
\draw (2,-0.1) -- (2.07,-0.7);
\draw[fill=red] (2.07,-0.7) circle (0.04);
\draw (2,-0.1) -- (1.93,-0.7);
\draw[fill=red] (1.93,-0.7) circle (0.04);
\draw (2,-0.1) -- (1.79,-0.7);
\draw[fill=red] (1.79,-0.7) circle (0.04);
\draw (2,-0.1) -- (1.65,-0.7);
\draw[fill=red] (1.65,-0.7) circle (0.04);
\draw[fill=red] (2,-0.1) circle (0.04);

\draw (4,-0.1) -- (4.35,-0.7);
\draw[fill=red] (4.35,-0.7) circle (0.04);
\draw (4,-0.1) -- (4.21,-0.7);
\draw[fill=red] (4.21,-0.7) circle (0.04);
\draw (4,-0.1) -- (4.07,-0.7);
\draw[fill=red] (4.07,-0.7) circle (0.04);
\draw (4,-0.1) -- (3.93,-0.7);
\draw[fill=red] (3.93,-0.7) circle (0.04);
\draw (4,-0.1) -- (3.79,-0.7);
\draw[fill=red] (3.79,-0.7) circle (0.04);
\draw (4,-0.1) -- (3.65,-0.7);
\draw[fill=red] (3.65,-0.7) circle (0.04);
\draw[fill=red] (4,-0.1) circle (0.04);

\draw (5,-0.1) -- (5.35,-0.7);
\draw[fill=red] (5.35,-0.7) circle (0.04);
\draw (5,-0.1) -- (5.21,-0.7);
\draw[fill=red] (5.21,-0.7) circle (0.04);
\draw (5,-0.1) -- (5.07,-0.7);
\draw[fill=red] (5.07,-0.7) circle (0.04);
\draw (5,-0.1) -- (4.93,-0.7);
\draw[fill=red] (4.93,-0.7) circle (0.04);
\draw (5,-0.1) -- (4.79,-0.7);
\draw[fill=red] (4.79,-0.7) circle (0.04);
\draw (5,-0.1) -- (4.65,-0.7);
\draw[fill=red] (4.65,-0.7) circle (0.04);
\draw[fill=red] (5,-0.1) circle (0.04);
\end{tikzpicture}
\vspace{12pt}

\begin{minipage}{0.75\linewidth}
\textsc{figure} 1.  A set of $l$-volcanoes arising from Theorem~\ref{theprimes}.  In this example $l=7$ splits into ideals of order 3 in $\Pic(\O)$ and we have $h(\O)=12$ surface curves and $h(R)=72$ floor curves.
\end{minipage}
\end{figure}
\smallskip

This configuration contains enough information to compute the $l+2$ polynomials $\Phi_l(X,j_i)$ that we need to interpolate $\Phi_l(X,Y)\bmod p$.  It is not an arrangement that is likely to arise by chance; it is achieved by our choice of the order $\O$ and the primes~$p$ that we use.
To further simplify our task, we choose $p$ so that vertices on the surface correspond to curves with $\Fp$-rational $l$-torsion.
Our ability to obtain such primes is guaranteed by Theorems~\ref{theprimes} and~\ref{smallprimes}, proven in Section~\ref{ExplicitCMTheory}.

The curves on the surface all have the same endomorphism ring type, isomorphic to an imaginary quadratic order $\O$.
Their $j$-invariants are precisely the roots of the Hilbert class polynomial $H_\O\in\Z[X]$.
As described in~\cite{Belding:HilbertClassPolynomial}, the roots of $H_\O$ may be enumerated via the action of the ideal class group $\Pic(\O)$.  To do so efficiently, we use an algorithm of~\cite{Sutherland:HilbertClassPolynomials} to compute a \emph{polycyclic presentation} for $\Pic(\O)$ that allows us to enumerate the roots of $H_\O$ via isogenies of low degree, typically much smaller than~$l$.
We may use this presentation to determine the action of any element of $\Pic(O)$, including those that act via $l$-isogenies.
This allows us to identify the $l$-isogeny cycles that form the surfaces of the volcanoes in Figure 1.

Similarly, the vertices on the floor are the roots of $H_R$, where $R$ is the order of index $l$ in $\O$, and we use a polycyclic presentation of $\Pic(R)$ to enumerate them.
To identity children of a common parent (siblings), we exploit the fact that siblings lie in a cycle of $l^2$-isogenies, which we identify using our presentation of $\Pic(R)$.
It remains only to connect each parent to one of its children.  This may be achieved by using V\'{e}lu's formula~\cite{Velu:Isogenies} to compute an $l$-isogeny from the surface to the floor.
By matching each parent to a group of siblings, we avoid the need to compute an $l$-isogeny to every child, which is critical to obtaining the complexity bound in (\ref{modpbound}).

Below is a simplified version of the algorithm to compute $\Phi_l\bmod p$.
\begin{algorithm}\label{alg1}
{Let $l$ be an odd prime, and let $\O$ be an imaginary quadratic order of discriminant $D$ with class number $h(\O)\ge l+2$.
Let $p\equiv 1\bmod l$ be a prime satisfying $4p=t^2-l^2v^2D$ for some integers~$t$ and~$v$ with $l\nmid v$.
Let $R$ be the order of index $l$ in $\O$.
Compute $\Phi_l\bmod p$ as follows:}
\begin{enumerate}
\algitem
Find a root of $H_\O$ over $\Fp$.
\algitem
Enumerate the roots $j_i$ of $H_\O$ and identify the $l$-isogeny cycles.
\algitem
For each $j_i$ find an $l$-isogenous $j$ on the floor.
\algitem
Enumerate the roots of $H_R$ and identify the $l^2$-isogeny cycles.
\algitem
For each $j_i$ compute $\Phi_l(X,j_i)=\prod_{(j_i,j_k)\in G_l}(X-j_k)$.
\algitem
Interpolate $\Phi_l\in (\Fp[Y])[X]$ using the $j_i$ and the polynomials $\Phi_l(X,j_i)$.
\end{enumerate}
\end{algorithm}
The conditions on the inputs $l$, $\O$, and $p$ suffice to ensure that Theorem~\ref{theprimes} is satisfied, so that we have a configuration of $l$-volcanoes similar to the example in Figure 1.
We use the same $\O$ for each $p$, so the Hilbert class polynomial $H_\O$ may be precomputed, but we do not need to compute $H_R$, instead we enumerate its roots by applying the Galois action of $\Pic(R)$ to a root obtained in Step 3.

A more detailed version of Algorithm~\ref{alg1} appears in Section~6 together with Algorithm~\ref{alg2}, which selects the order $\O$ and the primes $p$, and performs the CRT computations needed to determine $\Phi_l$ over $\Z$, or modulo~$m$.

\section{Orders in imaginary quadratic fields}
It is a classical fact that the endomorphism ring of an ordinary elliptic
curve over a finite field is isomorphic to an imaginary quadratic order~$\O$.
The order $\O$ is necessarily contained in the maximal order $\O_K$ of its
fraction field~$K$, but we quite often have $\O\subsetneq\O_K$.  As 
most textbooks on algebraic number theory focus on
maximal orders, we first develop some useful tools for working with non-maximal
orders.  To simplify the presentation, we work throughout with fields of
discriminant $d_K<-4$, ensuring that we always have the unit groups
$\O^*=\O_K^*=\{\pm1\}$.  We use $\inkron{d_K}{p}$ to denote the Kronecker symbol,
which is~$-1, 0$, or $1$ as the prime $p$ splits, ramifies, or remains inert in $K$ (respectively).
\smallskip

Let $\O$ be a (not necessarily maximal) order in a quadratic field $K$ of discriminant $d_K<-4$.
Let $N$ be a positive integer prime to the conductor $u=\idx{\O_K}{\O}$.
The order $R = \Z + N\O$ has index~$N$ in $\O$, and its ideal class group $\Pic(R)$ is an extension of $\Pic(\O)$.
More precisely, as in~\cite[Thm.~6.7]{Stevenhagen:NumberRings}, there is an exact sequence
\begin{equation}\label{exactsequence}
1 \mapright{} (\O/N\O)^* / (\Z/N\Z)^* \mapright{} \Pic(R) 
\mapright{\varphi} \Pic(\O) \mapright{} 1,
\end{equation}
where $\varphi$ maps the class $[I]$ to the class $[I\O]$.
For $R$-ideals prime to~$uN$, the underlying map $I\mapsto I\O$ preserves 
`norms', that is, $\idx{R}{I}=\idx{\O}{I\O}$, as in~\cite[Prop.~7.20]{Cox:ComplexMultiplication}.
We have a particular interest in the kernel of the map $\varphi$.

\begin{lemma}\label{cyclic} In the exact sequence above, if $N=p^n$ is a power of an unramified odd prime $p$, then $\ker\varphi$ is cyclic of order $p^{n-1}\bigl(p-\inkron{d_K}{p}\bigr)$.
\end{lemma}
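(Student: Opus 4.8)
The plan is to start from the exact sequence~(\ref{exactsequence}), which gives $\ker\varphi\cong(\O/N\O)^*/(\Z/N\Z)^*$, and to compute the right-hand group for $N=p^n$. Since $p$ is prime to the conductor $u=\idx{\O_K}{\O}$, tensoring the inclusion $\O\hookrightarrow\O_K$ of free $\Z$-modules (with finite cokernel of order $u$) with $\Z/p^n\Z$ yields a ring isomorphism $\O/p^n\O\isar\O_K/p^n\O_K$. It therefore suffices to show that $(\O_K/p^n\O_K)^*/(\Z/p^n\Z)^*$ is cyclic of order $p^{n-1}\bigl(p-\inkron{d_K}{p}\bigr)$.

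Next I would use the canonical decomposition of the unit group of the finite ring $\O_K/p^n\O_K$ (a product of local rings) as $(\O_K/p^n\O_K)^*=\mu\times U_1$, where $\mu$ is the group of roots of unity (the Teichm\"uller lift) mapping isomorphically onto the residue units $(\O_K/p\O_K)^*$, and $U_1=1+p\O_K/p^n\O_K$ is the group of principal units. The inclusion $\Z/p^n\Z\hookrightarrow\O_K/p^n\O_K$ respects this decomposition, carrying $(\Z/p^n\Z)^*=\mu'\times V_1$ with $\mu'\subseteq\mu$ the corresponding lift of $(\Z/p\Z)^*$ and $V_1=1+p\Z/p^n\Z\subseteq U_1$. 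Hence
\[ (\O_K/p^n\O_K)^*/(\Z/p^n\Z)^* \;\cong\; (\mu/\mu')\times(U_1/V_1), \]
and it is enough to show that $\mu/\mu'$ is cyclic of order $p-\inkron{d_K}{p}$ and that $U_1/V_1$ is cyclic of order $p^{n-1}$; as these orders are coprime, the product is then cyclic of the required order.

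For the prime-to-$p$ factor: when $p$ is inert, $\O_K/p\O_K=\F_{p^2}$, so $\mu$ is cyclic of order $p^2-1$, $\mu'$ is its unique subgroup of order $p-1$, and $\mu/\mu'$ is cyclic of order $p+1=p-\inkron{d_K}{p}$; when $p$ splits, $\O_K/p\O_K\cong\F_p\times\F_p$, so $\mu\cong\mu_{p-1}\times\mu_{p-1}$ with $\mu'$ the diagonal, and $(x,y)\mapsto xy^{-1}$ identifies $\mu/\mu'$ with a cyclic group of order $p-1=p-\inkron{d_K}{p}$. For the $p$-factor I would invoke the $p$-adic logarithm: since $p$ is odd, it gives a group isomorphism $U_1\isar p\O_K/p^n\O_K$ restricting to an isomorphism $V_1\isar p\Z/p^n\Z$. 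Fixing a $\Z$-basis $\{1,\omega\}$ of $\O_K$, we have $p\O_K/p^n\O_K=(p\Z/p^n\Z)\oplus(p\Z\omega/p^n\Z\omega)$ with $p\Z/p^n\Z$ as the first summand, so $U_1/V_1\cong p\Z\omega/p^n\Z\omega$ is cyclic of order $p^{n-1}$, completing the argument.

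The main obstacle is the principal-unit computation: one must check that the $p$-adic logarithm converges and is an isomorphism on $U_1$ with the indicated behaviour on $V_1$ (this is exactly where the hypothesis that $p$ is odd enters), and that $V_1$ corresponds to a $\Z/p^n\Z$-direct summand of $p\O_K/p^n\O_K$ --- this is what forces the quotient to be cyclic rather than merely of order $p^{n-1}$. One also has to keep the split and inert cases apart, noting in particular that in the split case $\mu$ itself is not cyclic although $\mu/\mu'$ is; the order counts are otherwise routine.
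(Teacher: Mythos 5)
Your proof is correct, but it follows a genuinely different route from the paper's. The paper quotes \cite[Cor.~4.2.11]{Cohen:CANT2} for the abstract structure of $(\O/p^n\O)^*$ --- namely $(\Z/(p-1)\Z)^2\times(\Z/p^{n-1}\Z)^2$ in the split case and $(\Z/(p^2-1)\Z)\times(\Z/p^{n-1}\Z)^2$ in the inert case --- and then argues about how the cyclic group $(\Z/p^n\Z)^*$ sits inside it: its $p$-part is a maximal cyclic subgroup of the Sylow $p$-subgroup and hence a direct summand, so the quotient has $p$-rank $1$, while its prime-to-$p$ part lands in a cyclic factor; cyclicity and the order count follow. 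You instead reduce to the maximal order (legitimate, since $p\nmid\idx{\O_K}{\O}$ makes $\O/p^n\O\to\O_K/p^n\O_K$ an isomorphism), split the unit group into its Teichm\"uller (prime-to-$p$) part and the principal units, and compute the two quotient factors explicitly: the prime-to-$p$ quotient by hand in the split and inert cases, and the $p$-part via the $p$-adic logarithm together with the basis $\{1,\omega\}$, which exhibits the image of $V_1$ as the summand $p\Z/p^n\Z$ of $p\O_K/p^n\O_K$. In effect you prove concretely the same key fact the paper extracts abstractly (that the $p$-part of the image of $(\Z/p^n\Z)^*$ is a direct summand of the Sylow $p$-subgroup), at the cost of invoking the standard properties of $\log$ on $1+p\O_K\otimes\Z_p$ for odd unramified $p$ (which you correctly flag as the point where $p$ odd enters), and with the benefit of not needing the structure theorem for $(\O/p^n\O)^*$ as a black box. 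One small remark: your identifications split $\leftrightarrow p-1$ and inert $\leftrightarrow p+1$ use the standard convention $\inkron{d_K}{p}=+1$ for split primes, which is what the lemma's formula and the class number formula (\ref{classnumber}) require; the sentence in Section~3 defining the Kronecker symbol has the roles of split and inert inadvertently swapped, so your reading is the intended one.
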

\begin{proof}
We compute the structure of $\ker\varphi\cong (\O/p^n\O)^*/(\Z/p^n\Z)^*$.
The group $(\Z/p^n\Z)^*$ is cyclic, isomorphic to the \emph{additive} group $(\Z/(p-1)\Z)\times(\Z/p^{n-1}\Z)$.  We now apply~\cite[Cor.~4.2.11]{Cohen:CANT2} to compute the structure of $(\O/p^n\O)^*$:

\begin{equation}\label{unitgroup}
(\O/p^n\O)^*\cong
\begin{cases}
(\Z/(p-1)\Z)^2\times(\Z/p^{n-1}\Z)^2, &\text{if $p$ splits in $K$;}\\
(\Z/(p^2-1)\Z)\times(\Z/p^{n-1}\Z)^2, &\text{if $p$ is inert in $K$.}
\end{cases}
\end{equation}
In both cases, the factor $(\Z/p^{n-1}\Z)$ of $(\Z/p^n\Z)^*$ is a maximal cyclic subgroup of the Sylow $p$-subgroup of $(\O/p^n\O)^*$, and must correspond to a direct summand.
Thus the $p$-rank of the quotient $(\O/p^n\O)^*/(\Z/p^n\Z)^*$ is 1.
The order of the factor $(\Z/(p-1)\Z)$ of $(\Z/p^n\Z)^*$ is not divisible by $p$, and must correspond to a subgroup of a cyclic factor of $(\O/p^n\O)^*$ in both cases.  It follows that the quotient is cyclic.
The calculations above also show that $\#(\O/p^n\O)^*/(\Z/p^n\Z)^* = p^{n-1}\bigl(p-\inkron{d_K}{p}\bigr)$.
\end{proof}
\noindent
Even when $\ker\varphi$ is not necessarily cyclic, the size of $\ker \varphi$ is as
in Lemma~\ref{cyclic}.  More generally, the exact sequence (\ref{exactsequence})
can be used to derive the formula
\begin{equation}\label{classnumber}
h(\O)=h(\O_K)u\prod_{p|u}\left(1-\kron{d_K}{p}p^{-1}\right),
\end{equation}
as in~\cite[Thm.~7.24]{Cox:ComplexMultiplication}.

We now describe a particular representation of $\ker \varphi$ when $N=l$ is 
prime. In this case $\ker\varphi$ is cyclic, of order $l-\inkron{d_K}{l}$;
this follows from Lemma~\ref{cyclic} for $l>2$, and from (\ref{classnumber}) for $l=2$.
Let $\O=\Z[\tau]$ for some $\tau\in K$ that is coprime to~$l$.
There are exactly $l+1$ index~$l$ sublattices of~$\O$: the order $R$, and lattices $S_i=l\Z+(\tau+i)\Z$, for $i$ from 0 to $l-1$.
Each $\O$-ideal of norm $l$ corresponds to one of the $S_i$.
The remaining $S_i$ are fractional invertible $R$-ideals corresponding to proper $R$-ideals
\begin{equation}\label{Ji}
J_i=lS_i=l^2\Z+l(\tau+i)\Z,
\end{equation}
for which $R=\{\beta\in K:\beta J_i\subset J_i\}$.
Exactly $1+\inkron{d_K}{l}$ of the $S_i$ are $\O$-ideals, leaving $l-1-\inkron{d_K}{l}$ proper $R$-ideals~$J_i$.
These are all non-principal and inequivalent in $\Pic(R)$, and each lies in $\ker \varphi$, since we have $J_i\O=l\O$. The invertible $J_i$ are exactly the non-trivial elements of $\ker \varphi$.
We summarize with the following lemma, which guarantees that we can find a generator for the cyclic group $\ker \varphi$ that has norm~$l^2$.

\begin{lemma}\label{generators}
If $N=l$ is prime in the exact sequence $(\ref{exactsequence})$, then the $R$-ideal $lR$ and the invertible $R$-ideals $J_i$ defined in $(\ref{Ji})$ are representatives for $\ker \varphi$.
In particular, $\ker \varphi$ is generated by the class of an invertible $R$-ideal with norm $l^2$.
\end{lemma}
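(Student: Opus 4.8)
The plan is to turn the observations preceding the lemma into a short counting argument. Recall that $\ker\varphi$ is cyclic of order $n:=l-\inkron{d_K}{l}$, by Lemma~\ref{cyclic} when $l>2$ and by~(\ref{classnumber}) when $l=2$. As candidate representatives I would take the class $[lR]$ together with the classes $[J_i]$, where $i$ runs over the $l-1-\inkron{d_K}{l}$ indices for which $S_i$ is not an $\O$-ideal. The first step is to check that each candidate lies in $\ker\varphi$: the ideal $lR$ is principal, so $[lR]$ is the identity, and $J_i\O=l\O$ (already noted when the $J_i$ were defined) gives $\varphi([J_i])=[J_i\O]=[l\O]=1$. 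Since there are $1+(l-1-\inkron{d_K}{l})=n$ candidates and $\#\ker\varphi=n$, the lemma follows once the candidate classes are shown to be pairwise distinct.

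Distinctness is the one point that needs an idea rather than bookkeeping, and the idea is to push a relation forward through the norm-preserving map $I\mapsto I\O$ underlying $\varphi$. If $[J_i]=[J_k]$, then $J_k=\alpha J_i$ for some $\alpha\in K^{\times}$; applying $\,\cdot\,\O$ and using $J_i\O=J_k\O=l\O$ forces $\alpha\,l\O=l\O$, hence $\alpha\in\O^{\times}$, which equals $\{\pm1\}$ by the standing hypothesis $d_K<-4$. Thus $J_k=J_i$, and a comparison of $\Z$-bases then pins down $i\equiv k\pmod l$, so $i=k$. The same manoeuvre separates each $[J_i]$ from $[lR]$: if $J_i=\alpha R$ then $\alpha\O=J_i\O=l\O$ gives $\alpha=\pm l$ and $J_i=lR$, which is impossible since $l\in lR$ while the rational integers of $J_i$ all lie in $l^2\Z$. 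Hence the $n$ candidate classes are distinct and exhaust $\ker\varphi$.

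For the closing assertion, I would note that each listed ideal is an invertible $R$-ideal of norm $l^2$: the ideal $lR$ is principal with $N_{K/\Q}(l)=l^2$, and each $J_i$ is a proper — hence invertible — $R$-ideal with $\idx{R}{J_i}=\idx{\O}{J_i}/\idx{\O}{R}=l^3/l=l^2$ by~(\ref{Ji}). Since $\ker\varphi$ is cyclic, it has a generator; if $n>1$ that generator has order exceeding $1$ and is therefore one of the $[J_i]$, and if $n=1$ the class $[lR]$ serves. In either case $\ker\varphi$ is generated by the class of an invertible $R$-ideal of norm $l^2$.

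I expect the only genuine obstacle to be the distinctness step, and within it the reduction of $J_k=\alpha J_i$ to $\alpha\in\O^{\times}$; the remaining ingredients are already assembled in the text, so the write-up should be brief.
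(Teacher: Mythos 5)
Your proof is correct and follows the same route the paper takes in the discussion immediately preceding the lemma: count the candidates $[lR]$ and the $[J_i]$, show they lie in $\ker\varphi$ via $J_i\O=l\O$, and match against $\#\ker\varphi=l-\inkron{d_K}{l}$. The one place the paper only asserts — that the $J_i$ are non-principal and pairwise inequivalent — you prove cleanly by pushing a relation $J_k=\alpha J_i$ through $I\mapsto I\O$ to force $\alpha\in\O^\times=\{\pm1\}$ and then comparing $\Z$-bases, which is exactly the right supporting argument.
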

\noindent
This representation of $\ker\varphi$ has proven useful in other 
settings~\cite{Castagnos:NICECryptanalysis}. We use it to obtain 
the $l^2$-isogeny cycles we need in Step 4 of Algorithm~\ref{alg1}.

We conclude this section with a theorem that allows us to construct arbitrarily large class groups that are generated by elements of bounded norm.

\begin{theorem}\label{theorders}
Let $\O$ be an order in a quadratic field of discriminant $d_K<-4$, and 
let $p\nmid\disc(\O)$ be an odd prime. Let $\mathcal{P}$ be a set of primes 
that do not divide $p\idx{\O_K}{\O}$.
For $n\in\Z_{\ge 0}$, let $R_n$ denote the order $\Z+p^n\O$, and let $G_n$ be the subgroup of $\Pic(R_n)$ generated by the set $S_n$ of classes of $R_n$-ideals with norms in $\mathcal{P}$.

Then if $G_2=\Pic(R_2)$, we have $G_n=\Pic(R_n)$ for every $n\in\Z_{\ge 0}$.
\end{theorem}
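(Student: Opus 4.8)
The plan is to arrange all the groups $\Pic(R_n)$ into a compatible tower, use Lemma~\ref{cyclic} to pin the relevant kernels down as cyclic $p$-groups, and then observe that lifting a single generator of a tiny kernel inside $\Pic(R_2)$ already accounts for everything $G_n$ could be missing.

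First I would set up the tower. For $m\ge n\ge 0$ the inclusion $R_m\subseteq R_n$ induces the standard surjection of ring class groups $\psi_{m,n}\colon\Pic(R_m)\twoheadrightarrow\Pic(R_n)$, which on ideals coprime to $p\idx{\O_K}{\O}$ is $I\mapsto IR_n$ (see, e.g., \cite{Cox:ComplexMultiplication}); these compose correctly and are compatible with the projections $\varphi_n\colon\Pic(R_n)\to\Pic(\O)$ of~\eqref{exactsequence}, so that $\psi_{n,1}=\psi_{2,1}\circ\psi_{n,2}$ and $\varphi_1\circ\psi_{n,1}=\varphi_n$, and so on. Since $p$ is odd and $p\nmid\disc(\O)=\idx{\O_K}{\O}^2d_K$, the prime $p$ is unramified in $K$ and coprime to $\idx{\O_K}{\O}$, so Lemma~\ref{cyclic} applies to every $\varphi_n$ and makes $\ker\varphi_n$ cyclic of order $p^{n-1}\bigl(p-\inkron{d_K}{p}\bigr)$ for $n\ge1$. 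A quick diagram chase shows $\psi_{m,n}$ maps $\ker\varphi_m$ onto $\ker\varphi_n$; feeding in the orders from Lemma~\ref{cyclic}, the subgroup $K_n:=\ker\psi_{n,1}\subseteq\ker\varphi_n$ is cyclic of order $p^{n-1}$, its unique maximal subgroup is $K_n':=\ker\psi_{n,2}$ of order $p^{n-2}$, and $\psi_{n,2}$ induces an isomorphism $K_n/K_n'\xrightarrow{\,\sim\,}K_2=\ker\psi_{2,1}$, cyclic of order $p$. Finally, each $\ell\in\mathcal P$ is coprime to every conductor $p^n\idx{\O_K}{\O}$, so the primes over $\ell$ in the various $R_k$ correspond under the $\psi$'s; hence $\psi_{m,n}(S_m)=S_n$ and $\psi_{m,n}(G_m)=G_n$. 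In particular $G_2=\Pic(R_2)$ forces $G_0=\Pic(R_0)$ and $G_1=\Pic(R_1)$, so it suffices to treat $n\ge2$.

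Fix $n\ge2$. Since $G_2=\Pic(R_2)$ contains the cyclic group $K_2$, some generator of $K_2$ is a $\ZZ$-linear combination $\sum_i a_i[\mathfrak q_i]$ of classes of primes $\mathfrak q_i$ of $R_2$ with norms in $\mathcal P$. For each $i$ let $\mathfrak Q_i$ be the corresponding prime of $R_n$ (lying over the same rational prime), and set $h:=\sum_i a_i[\mathfrak Q_i]\in G_n$. By compatibility $\psi_{n,2}(h)$ is the chosen generator of $K_2$, so — using $K_2=\ker\psi_{2,1}$ and $\psi_{2,1}\circ\psi_{n,2}=\psi_{n,1}$ — we get $h\in K_n$, and the image of $h$ in $K_n/K_n'\cong K_2$ is a generator. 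As $K_n$ is cyclic with unique maximal subgroup $K_n'$, any element mapping to a generator of $K_n/K_n'$ must generate $K_n$; hence $K_n\subseteq G_n$. But $\psi_{n,1}$ is surjective with kernel $K_n$ and sends $G_n$ onto $G_1=\Pic(R_1)$, so $G_n\cdot K_n=\Pic(R_n)$, and together with $K_n\subseteq G_n$ this yields $G_n=\Pic(R_n)$.

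The only genuine work is the first step: verifying that the maps $\Pic(R_m)\to\Pic(R_n)$ really form a compatible tower and that Lemma~\ref{cyclic} identifies the kernels $K_n$ and $K_n'$ with the asserted cyclic $p$-groups. Once that bookkeeping is in place, the decisive point — that lifting one generator of the order-$p$ group $K_2$ to $R_n$ produces a generator of the entire order-$p^{n-1}$ group $K_n$, precisely because $K_n$ is cyclic with $K_n/K_n'\cong K_2$ — makes the conclusion immediate, and no induction on $n$ is needed.
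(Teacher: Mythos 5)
Your argument is correct and uses the same essential ingredients as the paper's proof — the tower of norm-preserving surjections, Lemma~\ref{cyclic}, and the principle that a lift of a generator of a small cyclic quotient must generate the ambient cyclic $p$-group — but you organize it differently in a way that is worth noting. The paper inducts on $n$: it finds $\alpha_{n+1}\in G_{n+1}\cap\ker\varphi_{n+1}$ lifting a generator $\alpha_n$ of $\ker\varphi_n$, and then argues via ``$\alpha_{n+1}$ is not a $p$th power, so it has full order'' inside $\ker\varphi_{n+1}$, a cyclic group of order $p^{n}(p-\inkron{d_K}{p})$ with a coprime-to-$p$ piece that has to be carried along. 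You instead pass to the pure $p$-group kernel $K_n=\ker\psi_{n,1}$ (cyclic of order $p^{n-1}$, being a subgroup of the cyclic $\ker\varphi_n$), observe that $K_n/K_n'\cong K_2$ where $K_n'=\ker\psi_{n,2}$ is the unique maximal subgroup, and lift a generator of $K_2\subseteq G_2=\Pic(R_2)$ to $h\in G_n\cap K_n$ that maps to a generator of $K_n/K_n'$ and hence must generate $K_n$. This gives $K_n\subseteq G_n$ in one step, and $\psi_{n,1}(G_n)=\Pic(R_1)$ closes the argument. The gain over the paper's version is that you avoid the induction entirely and never have to reason about $p$th powers inside a group whose order is not a $p$-power: the ``unique maximal subgroup'' observation does the work automatically. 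The cost is that you must establish a bit more bookkeeping up front (the compatible tower $\psi_{m,n}$ and the orders $\#K_n=p^{n-1}$, $\#K_n'=p^{n-2}$, which you correctly derive from Lemma~\ref{cyclic} and the class number formula). Both arguments are sound; yours is arguably the cleaner statement of the underlying group-theoretic fact.
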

\begin{proof}
For each $R_n$, let $\varphi_n:\Pic(R_n)\to\Pic(\O)$ denote the corresponding 
map in the exact sequence~$(\ref{exactsequence})$, and let 
$\phi_{n+1}\colon \Pic(R_{n+1})\to\Pic(R_n)$ send $[I]$ to $[IR_n]$, so that 
$\varphi_{n+1}=\varphi_n\circ\phi_{n+1}$.
These are all surjective group homomorphisms, and the underlying ideal maps preserve the norms of ideals prime to $p\idx{\O_K}{\O}$.
We assume $G_2=\Pic(R_2)$, which implies $G_n=\Pic(R_n)$ for $n\le 2$, and proceed by induction on $n$.

For each prime $q\in\mathcal{P}$ and every $n$, there are 
exactly $1-\inkron{d_K}{q}$ ideals in $R_n$ of norm~$q$, and $\phi_{n+1}$ 
maps $S_{n+1}$ onto $S_n$ and $G_{n+1}$ onto $G_n$.
By the inductive hypothesis, $G_n=\Pic(R_n)$, therefore $G_{n+1}$ intersects 
every coset of $\ker \phi_{n+1}\subset \ker \varphi_{n+1}$.
To prove $G_{n+1}=\Pic(R_{n+1})$, it suffices to show $\ker \varphi_{n+1}\subset G_{n+1}$.

The groups $\ker \varphi_n$ and $\ker \varphi_{n+1}$ are cyclic, by 
Lemma~\ref{cyclic}, since $p$ is odd and unramified.
Let $\alpha_n$ be a generator for $\ker \varphi_n$.  Since $\#\ker \varphi_n$ is divisible by $p$, $\alpha_n$ cannot be a $p$th power in $\ker\varphi_n$.
Expressing $\alpha_n$ in terms of $S_n$, we see that 
$\phi_{n+1}^{-1}(\alpha_n)$ must intersect $G_{n+1}$.
Let $\alpha_{n+1}$ lie in this intersection, and note that 
$\alpha_{n+1}\in\ker \varphi_{n+1}$.
The order of $\alpha_{n+1}$ must be a multiple of $|\alpha_n|=\#\ker\varphi_n$, and $\alpha_{n+1}$ cannot be a $p$th power in $\ker\varphi_{n+1}$.  It follows that $\alpha_{n+1}$ has order $\#\ker\varphi_{n+1}$, hence it generates $\ker \varphi_{n+1}$, proving $\ker\varphi_{n+1}\subset G_{n+1}$ as desired.
\end{proof}

To see Theorem~\ref{theorders} in action, let $\O$ be the order of discriminant $D=-7$, let $p=3$, and let $\mathcal{P}=\{2\}$.
The class group of the order $R_n$ of discriminant $3^{2n}D$ happens to be generated by an ideal of norm 2 when $n=2$,
and the theorem then implies that this holds for all $n$.  This allows us to construct arbitrarily large cyclic class groups, each generated by an ideal of norm 2.

We remark that Theorem \ref{theorders} may be extended to handle $p=2$ if the condition $G_2=\Pic(R_2)$ is replaced by $G_3=\Pic(R_3)$, and easily generalizes to treat families of orders lying in $\O$ that have $b$-smooth conductors, for any constant $b$.

\section{Explicit CM theory}\label{ExplicitCMTheory}

\subsection{The theory of complex multiplication (CM)}\label{CMTheory}
As in Section~3, let $\O$ be an order in a quadratic field $K$ of discriminant 
$d_K<-4$. We fix an algebraic closure of~$K$. It follows from class field 
theory that there is a unique field $K_\O$ with the property that the 
Artin map induces an isomorphism
$$
\Gal(K_\O/K) \isar \Pic(\O)
$$
between the Galois group of $K_\O/K$ and the ideal class group of $\O$.
The field $K_\O$ is called the {\it ring class field\/} for the order~$\O$.
If $\O$ is the maximal order of $K$, then $K_\O$ is the Hilbert class field 
of~$K$, the maximal totally unramified abelian extension of~$K$.
In general, primes dividing $\idx{\O_K}{\O}$ ramify in the ring 
class field. 

The first main theorem of complex 
multiplication~\cite[Thm.~11.1]{Cox:ComplexMultiplication} states that
$$
K_\O = K(j(E)),
$$
for any complex elliptic curve $E$ with endomorphism ring~$\O$.
Furthermore, the minimal polynomial $H_\O$ of $j(E)$ over $K$ actually has
coefficients in $\Z$, and its degree is~$h(\O) = |\Pic(\O)|$.
The polynomial $H_\O$ is known as the {\it Hilbert class polynomial\/}.
If $p$ is a prime that splits completely in the extension $K_\O/\Q$, 
then $H_\O$ splits into distinct linear factors in $\Fp[X]$.
Its roots are the $j$-invariants of the elliptic curves $E/\Fp$ with 
$\End(E)\cong\O$, a set we denote $\Ell_\O(\Fp)$.  Let $D=\disc(\O)$.
The primes that split completely in $K_\O$ are precisely the
primes $p \nmid D$ that are the norm
$$
p = N_{K/\Q}\left(\frac{t+v\sqrt{D}}{2}\right) = \frac{t^2 - v^2D}{4}
$$
of an element of~$\O$. The equation $4p = t^2 - v^2D$ is
often called the \emph{norm equation}.

For a positive integer~$N$, there is a unique extension $K_{N,\O}$ of the ring 
class field $K_\O$ such that the Artin map induces an isomorphism
$$
\Gal(K_{N,\O},K_\O) \isar (\O/N\O)^* / \{ \pm 1 \}.
$$
The field $K_{N,\O}$ is the {\it ray class field of conductor~$N$ for~$\O$\/}.
When $\O=\O_K$, this is simply the ray class field of conductor~$N$, and for
$N=1$ we recover the ring class field $K_\O = K_{1,\O}$.
The ring class field $K_{R}$ of the order $R = \Z + N\O$ is a subfield of the 
ray class field~$K_{N,\O}$. The Galois group of $K_R/K_\O$ is isomorphic to
$$
(\O/N\O)^* / (\Z/N\Z)^*,
$$
the kernel of the map $\varphi$ in~(\ref{exactsequence}).

The second main theorem of complex multiplication~\cite[Thm.~11.39]{Cox:ComplexMultiplication} states that
$$
K_{N,\O} = K_\O(x(E[N])),
$$
where $x(E[N])$ denotes the set of $x$-coordinates of the $N$-torsion points of an 
elliptic curve~$E$ with endomorphism ring~$\O$. 
The Galois invariance of the {\it Weil pairing\/} 
$E[N] \times E[N] \rightarrow \mu_N$ implies that the cyclotomic field 
$\Q(\zeta_N)$ is contained in the ray class field~$K_{N,\O}$ (a fact
that also follows directly from class field theory).
In particular, a prime $p$ that splits completely in $K_{N,\O}$ also splits 
completely in $\Q(\zeta_N)$, and is therefore congruent to $1$ modulo~$N$.

\subsection{Primes that split completely in the ray class field}\label{suitableprimes}
We are specifically interested in primes $p$ that split completely in the
ray class field $K_{l,\O}$, where $l$ is an odd prime. For such $p$
we can achieve the desired setting for Algorithm~\ref{alg1}, as depicted in
Figure 1.
\renewcommand\labelenumi{(\theenumi)}
\begin{theorem}\label{theprimes}
Let $l> 2$ be prime, and let $\O \not \subset \Z[i], \Z[\zeta_3]$ be an imaginary quadratic order that is maximal at~$l$.
Let $R = \Z+l\O$ be the order of index~$l$ in~$\O$.
Let $p$ be a prime that splits completely in the ray class field $K_{l,\O}$, but does not split completely in the ring class field for the order of index $l^2$ in~$\O$.
\begin{enumerate}
\item There are exactly $h(\O)$ different $\Fp$-isomorphism classes of elliptic curves $E/\Fp$ with endomorphism ring $\O$ that have $E[l] \subset E(\Fp)$.
\item There are exactly $h(R)$ different $\Fp$-isomorphism classes of elliptic curves $E/\Fp$ with endomorphism ring $R$ that have an $\Fp$-rational $l^2$-torsion point.
\end{enumerate}
\end{theorem}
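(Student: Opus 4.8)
The plan is to reduce both parts to statements about the Frobenius endomorphism and then count $\Fp$-isomorphism classes via the Deuring correspondence.

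\emph{Step 1 (shape of the Frobenius).} Since $K_\O\subseteq K_R\subseteq K_{l,\O}$, with $R=\Z+l\O$, the hypotheses force $p$ to split completely in $K_\O$ and in $K_R$, but not in $K_{R'}$, where $R'=\Z+l^2\O$ is the order of index $l^2$ in $\O$; in particular $p\neq l$ and $p\nmid\disc(\O)$. By the norm-equation characterization of splitting in $K_\O$ (Section~\ref{CMTheory}), $p=\norm_{K/\Q}(\pi)$ for some $\pi\in\O$; and since the Artin map identifies $\Gal(K_{l,\O}/K_\O)$ with $(\O/l\O)^*/\{\pm1\}$, sending the class of $(\pi)$ to $\pi\bmod l\O$, such a $\pi$ may be chosen with $\pi\equiv\pm1\pmod{l\O}$. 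Replacing $\pi$ by $-\pi$ if needed, fix $\pi\in\O$ with $\norm(\pi)=p$ and $\pi\equiv1\pmod{l\O}$; applying complex conjugation, $\bar\pi\equiv1\pmod{l\O}$ as well. Write $\pi=1+l\gamma$ with $\gamma\in\O$. From $l\O\subseteq R$ we get $\pi\in R$, and a direct computation in $\O$ shows $\pi\in R'$ if and only if $\gamma\in R$. Since the norm-$p$ elements of $\O$ are precisely $\pm\pi,\pm\bar\pi$ (using $\O^*=\{\pm1\}$) and $p$ does not split completely in $K_{R'}$, we get $\pi\notin R'$, i.e.\ $\gamma\notin R$ (equivalently $\bar\gamma\notin R$). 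Finally, any $E/\Fp$ with $\End(E)\cong\O$ or $\cong R$ has Frobenius a norm-$p$ element of $\O$, hence trace $t:=\tr(\pi)$ or $-t$.

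\emph{Step 2 (counting classes).} Since $d_K<-4$, we have $\O^*=\O_K^*=\{\pm1\}$, so the $j$-invariants arising for curves with $\End\cong\O$ or $\cong R$ differ from $0$ and $1728$ (as $K\neq\Q(i),\Q(\sqrt{-3})$), and each has exactly two quadratic twists, distinguished by the sign of the trace. Because $\Z[\pi]\subseteq\O$ and $\Z[\pi]\subseteq R$, the Deuring correspondence gives exactly $h(\O)$ $\Fp$-isomorphism classes with $\End\cong\O$ and trace $t$ (a $\Pic(\O)$-torsor), and exactly $h(R)$ with $\End\cong R$ and trace $t$; the remaining twists have trace $-t$. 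It remains to check that these are precisely the curves satisfying the torsion conditions.

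\emph{Step 3 (torsion).} For $E/\Fp$ with Frobenius $\pi_E$ and any prime power $l^k$, one has $E[l^k]\subseteq E(\Fp)$ iff $(\pi_E-1)/l^k\in\End(E)$; moreover $E(\Fp)\cong\Z/m_1\Z\times\Z/m_2\Z$ with $m_1\mid m_2$ and $m_1$ the largest integer dividing $\pi_E-1$ in $\End(E)$. For part~(1): when $\End(E)\cong\O$, the maximality of $\O$ at $l$ makes $E[l]$ a free rank-one module over $\O/l\O$ on which $\pi_E$ acts as its image, so $E[l]\subseteq E(\Fp)$ iff $\pi_E\equiv1\pmod{l\O}$; this holds for $\pi_E\in\{\pi,\bar\pi\}$ and fails for $\pi_E\in\{-\pi,-\bar\pi\}$ since $l>2$. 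Hence the curves of part~(1) are exactly the $h(\O)$ curves with $\End\cong\O$ and trace $t$. For part~(2): let $\End(E)\cong R$. If the trace is $-t$, then $\pi_E-1\equiv-2\pmod{l\O}$ is a unit in $\O/l\O$, so $l\nmid\norm(\pi_E-1)=\#E(\Fp)$ and $E$ has no rational point of order $l$. If the trace is $t$, then $\pi_E-1=l\gamma'$ with $\gamma'\in\{\gamma,\bar\gamma\}$, so $v_l(\#E(\Fp))=2+v_l(\norm(\gamma'))\geq2$; and $(\pi_E-1)/l=\gamma'\notin R=\End(E)$ by Step~1, so $E[l]\not\subseteq E(\Fp)$, $l\nmid m_1$, and $E(\Fp)[l^\infty]$ is cyclic of order at least $l^2$. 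Thus $E$ has a rational point of order $l^2$, and the curves of part~(2) are exactly the $h(R)$ curves with $\End\cong R$ and trace $t$.

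The delicate point is Step~1: one must pin down precisely how the splitting of $p$ in the tower $K_\O\subseteq K_R\subseteq K_{R'}\subseteq K_{l,\O}$ corresponds to the $l$-adic valuation of the conductor of $\Z[\pi]$, so that ``$p$ does not split completely in $K_{R'}$'' becomes exactly ``$\gamma\notin R$'' — this is what separates the cyclic order-$l^2$ torsion on the floor (part~2) from the full rational $l$-torsion on the surface (part~1). Steps~2 and~3 are then routine, modulo standard facts on the Deuring correspondence and on the group structure of $E(\Fp)$.
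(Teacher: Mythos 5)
Your proof is correct and follows essentially the same line as the paper's: normalize the Frobenius so that $\pi\equiv 1\pmod{l\O}$, observe that the two quadratic twists at each $j$-invariant are distinguished by the sign of the trace, and read the $l$-power torsion structure directly off $\pi-1$ in $\End(E)$. The only stylistic difference is in part~(2): the paper fixes an $l$-isogeny $E\to E'$ to transport the group order and to identify $\pi_{E'}$ with $\pi$, whereas you argue directly from $\pi\in R$ and the equality $\#E(\Fp)=\norm(\pi-1)$; your explicit appeal to the cyclic structure of $E(\Fp)[l^\infty]$ when $l\nmid m_1$ also makes precise the step the paper compresses into ``$E'[l]\not\subset E'(\Fp)$ and $E'(\Fp)$ must contain a point of order $l^2$.''
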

\begin{proof}
The inclusions $K_\O \subseteq K_R \subseteq K_{l,\O}$ imply that both $H_\O$ 
and $H_R$ split into linear factors in~$\Fp$.
Each $j$-invariant in $\Ell_\O(\Fp)$, resp.~$\Ell_R(\Fp)$, corresponds to two 
distinct isomorphism classes over $\Fp$, since these curves are ordinary
and $\O \not = \Z[i], \Z[\zeta_3]$.
We will show that exactly one of these satisfies (1), resp.~(2). 

Since $p$ splits completely in the ray class field $K_{l,\O}$, we can 
factor $p = \pi_p \overline\pi_p \in \O$ with $\pi_p \equiv 1 \bmod l\O$.
Let $E/\Fp$ be an elliptic curve with endomorphism ring~$\O$ whose Frobenius 
endomorphism corresponds to $\pi_p$ under one of the two isomorphisms 
$\End(E) \isar \O$.
Since $\pi_p \equiv 1 \bmod l\O$, we have $E[l] \subset E(\Fp)$.
The Frobenius endomorphism of the non-isomorphic quadratic twist 
$\tilde{E}/\Fp$ corresponds to $-\pi_p$, and we then have 
$\#\tilde{E}(\Fp)=p+1-\tr(-\pi_p)\equiv 2\bmod l$.
For $l\not = 2$ this implies that~$\tilde{E}$ has trivial $l$-torsion 
over $\Fp$, proving (1).

To prove (2), let $E'/\Fp$ be a curve with endomorphism ring $R$ that is 
$l$-isogenous to~$E$. The Frobenius endomorphism of $E'$ also corresponds 
to $\pi_p$ under an isomorphism $\End(E') \isar R$.
The cardinality of $E'(\Fp)$ is thus equal to the cardinality of $E(\Fp)$ and 
therefore divisible by~$l^2$. However, since $p$ does {\it not\/} split 
completely in the ring class field of index $l^2$ in $\O$, we cannot have 
$\pi_p \equiv 1 \bmod lR$. It follows that $E'[l] \not \subset E'(\Fp)$ and 
$E'(\Fp)$ must contain a point of order $l^2$. As above, the quadratic twist 
of $E'$ must have trivial $l$-torsion over $\Fp$, proving (2).
\end{proof}

Provided the order $\O$ in Theorem~\ref{theprimes} also satisfies 
$h(\O)\ge l+2$, we can achieve the desired setting for Algorithm~\ref{alg1}.
We say such an order is \emph{suitable} for $l$.

To determine the coefficients $\Phi_l$ via the Chinese Remainder Theorem, we 
need to compute $\Phi_l\bmod p$ for many primes $p$ satisfying 
Theorem~\ref{theprimes}.
We necessarily have $p>l$, since $p\equiv 1\bmod l$, and the height 
bound~\cite{CohenPaula:ModularPolynomials} on the coefficients of $\Phi_l$ 
implies that $6l+O(1)$ primes suffice.
We now show these primes exist and bound their size, assuming the GRH.
For this purpose we define a \emph{suitable family of orders}.

\begin{definition}\label{suitableorders}
{Let $S$ be the set of odd primes and let $T$ be the set of all imaginary quadratic orders.
A suitable family of orders is a function $\mathcal{F}:S\to T$ such that:
\begin{enumerate}
\item
for all $l\in S$ the order $\mathcal{F}(l)$ is suitable for $l$.
\item
there exist effective constants $c_1,c_2\in\R_{>0}$ such that for all $l\in S$ the bounds $l+2\le h(\mathcal{F}(l))\le c_1l$ and $l^2\le|\disc(\mathcal{F}(l))|\le c_2l^2$ hold.
\end{enumerate}
}
\end{definition}

\begin{example}\label{exam}
{Let $\mathcal{F}(3)=\Z[\sqrt{-47}]$, and for $l > 3$ 
let $\mathcal{F}(l)$ be the order $\O$ of discriminant $-7\cdot3^{2n}$, 
where $n$ is the least integer for which $h(\O)=2\cdot3^{n-1}\ge l+2$.
Letting $c_1=4$ and $c_2=205$, we see that $\mathcal{F}$ is a suitable family 
of orders.}
\end{example}

\begin{theorem}\label{smallprimes}
Let $\mathcal{F}$ be a suitable family of orders and let $c_0\in \R_{>0}$ be an arbitrary constant.
Then for each prime $l>2$ the set of primes $p$ for which $l$, $\O=\mathcal{F}(l)$, and~$p$ satisfy the conditions of Theorem~\ref{theprimes} has positive density.

Assuming the GRH, there is an effective constant $c\in\R_{>0}$ such that at 
least $c_0l^3(\log l)^3$ of these primes are bounded by $B=cl^6(\log l)^4$, 
for all primes $l>7$.
\end{theorem}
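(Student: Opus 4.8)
The plan is to translate the conditions of Theorem~\ref{theprimes} into a splitting condition in a tower of ring and ray class fields over $K$, and then apply the Chebotarev density theorem — in its effective form, under GRH, for the quantitative part. Fix a prime $l>2$, put $\O=\mathcal F(l)$ (which is suitable for $l$), write $K$ for the fraction field of $\O$, and let $R=\Z+l\O$ and $R'=\Z+l^2\O$ be the orders of index $l$ and $l^2$ in $\O$, with ring class fields $K_R\subseteq K_{R'}$; let $K_{l,\O}$ be the ray class field of conductor $l$ for $\O$. For this choice of $\O$, the conditions on $p$ in Theorem~\ref{theprimes} say exactly that $p$ splits completely in $K_{l,\O}$ but \emph{not} in $K_{R'}$. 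Since $K_{l,\O}$, $K_{R'}$, and their compositum $M=K_{l,\O}\cdot K_{R'}$ are all Galois over $\Q$ (standard for ring and ray class fields), and a prime splits completely in both $K_{l,\O}$ and $K_{R'}$ exactly when it splits completely in $M$, the set of primes we want has density $\frac{1}{[K_{l,\O}:\Q]}-\frac{1}{[M:\Q]}$ by Chebotarev. This is positive precisely when $K_{R'}\not\subseteq K_{l,\O}$, so the first assertion reduces to that non-containment.

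To prove $K_{R'}\not\subseteq K_{l,\O}$ I would compare degrees over the common subfield $K_R$. From the exact sequence~(\ref{exactsequence}) and the class number formula~(\ref{classnumber}) one gets $[K_R:K_\O]=h(R)/h(\O)=l-\inkron{d_K}{l}$ and $[K_{R'}:K_\O]=h(R')/h(\O)=l\bigl(l-\inkron{d_K}{l}\bigr)$, hence $[K_{R'}:K_R]=l$. On the other hand $[K_{l,\O}:K_\O]=\tfrac12\#(\O/l\O)^*$, while $[K_R:K_\O]=\#(\O/l\O)^*/\#(\Z/l\Z)^*$ since $(\Z/l\Z)^*\hookrightarrow(\O/l\O)^*$ (because $\Z\cap l\O=l\Z$); therefore $[K_{l,\O}:K_R]=\tfrac12\#(\Z/l\Z)^*=(l-1)/2$. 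As $0<(l-1)/2<l$ for $l>2$, the prime $l=[K_{R'}:K_R]$ cannot divide $[K_{l,\O}:K_R]$, so $K_{R'}\not\subseteq K_{l,\O}$; consequently $[M:K_{l,\O}]=[K_{R'}:K_{l,\O}\cap K_{R'}]\ge2$, giving density $\ge\frac{1}{2[K_{l,\O}:\Q]}>0$ and proving the first assertion. This non-containment — essentially the fact that the ring class field of $R'$ is more ramified at $l$ than $K_{l,\O}$ is — is the heart of the argument.

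For the quantitative statement I would invoke the effective Chebotarev density theorem, valid under GRH: for a number field $L$ of degree $n_L$ and absolute discriminant $d_L$,
$$\#\{p\le x:p\text{ splits completely in }L\}=\frac{\Li(x)}{n_L}+O\!\left(x^{1/2}\Bigl(\tfrac{\log|d_L|}{n_L}+\log x\Bigr)\right),\qquad x\ge2.$$
Subtracting the estimate for $M$ from the one for $K_{l,\O}$ and using $\frac{1}{[K_{l,\O}:\Q]}-\frac{1}{[M:\Q]}\ge\frac{1}{2[K_{l,\O}:\Q]}$, the number $N(x)$ of good primes $p\le x$ satisfies
$$N(x)\ge\frac{\Li(x)}{2[K_{l,\O}:\Q]}-O\!\left(x^{1/2}\Bigl(\tfrac{\log|d_{K_{l,\O}}|}{[K_{l,\O}:\Q]}+\tfrac{\log|d_M|}{[M:\Q]}+\log x\Bigr)\right).$$
Two bounds finish the estimate. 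First, $[K_{l,\O}:\Q]=h(\O)\,\#(\O/l\O)^*\le c_1l\cdot l^2=c_1l^3$ by Definition~\ref{suitableorders} and $\#(\O/l\O)^*<l^2$. Second, $K_{l,\O}$ and $M$ both lie in $K_{l^2,\O}$, hence are abelian over $K$ with conductor dividing $l^2u\O_K$ for $u=\idx{\O_K}{\O}$; the conductor–discriminant formula then gives $\log|d_L|\le n_L\bigl(\tfrac12\log|d_K|+2\log l+\log u\bigr)$ for $L\in\{K_{l,\O},M\}$, and with $|d_K|\le|\disc\O|\le c_2l^2$ and $u^2\le c_2l^2$ this yields $\log|d_L|/n_L=O(\log l)$.

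Finally I would take $B=cl^6(\log l)^4$. Then $\log B=O(\log l)$, so $\Li(B)\gg B/\log B\gg l^6(\log l)^3$, while the error term above is $O(B^{1/2}\log l)=O(l^3(\log l)^3)$; hence $N(B)\ge\frac{\Li(B)}{2c_1l^3}-O(l^3(\log l)^3)\ge(c/A_1-A_2\sqrt c)\,l^3(\log l)^3$ for effective constants $A_1,A_2$ depending only on $c_1,c_2$. Choosing $c$ large enough in terms of $c_0,c_1,c_2$ makes this factor at least $c_0$ for all $l$ larger than an effective threshold $l_0$, and a further enlargement of $c$ handles the finitely many primes $7<l\le l_0$, for each of which the first part already guarantees $N(B)\to\infty$. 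The one genuinely delicate step is the non-containment $K_{R'}\not\subseteq K_{l,\O}$ of the second paragraph; once it (and hence positive density) is in hand, the remainder is a routine effective-Chebotarev computation, the only mild technicality being the conductor–discriminant bound controlling $\log|d_M|/[M:\Q]$.
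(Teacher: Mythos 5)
Your proposal is correct, and the two halves relate to the paper's proof in different ways. For the positive-density assertion, both you and the paper argue by a degree comparison showing $K_{R'}\not\subseteq K_{l,\O}$ (the paper writes $K_S$ for your $K_{R'}$); the paper simply computes $\#\Gal(K_{l,\O}/\Q)<\#\Gal(K_S/\Q)$ outright, whereas you refine this to a divisibility obstruction over $K_R$, namely $[K_{R'}:K_R]=l\nmid (l-1)/2=[K_{l,\O}:K_R]$. Same idea, slightly sharper packaging. For the GRH-conditional count, however, your route genuinely diverges from the paper's. The paper applies the effective Chebotar\"ev bound \emph{only} to $K_{l,\O}/\Q$, obtaining (\ref{LCbound}), and then disposes of the primes that also split in $K_S$ by an elementary count: such a $p$ must satisfy the norm equation $4p=t^2-v^2l^2D$ with $t\equiv 2\bmod l$ \emph{and} $l\mid v$, and one can bound directly how few $(t,v)$ of this shape produce $p\le B$. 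You instead invoke effective Chebotar\"ev for both $K_{l,\O}$ and the compositum $M=K_{l,\O}K_{R'}$, subtract, and use the conductor--discriminant formula to control $\log|d_M|/[M:\Q]$. Both reach the same $\Omega(l^3(\log l)^3)$ bound. Your version is cleaner and more uniform (everything is class field theory, and the nontrivial step $[M:K_{l,\O}]\ge 2$ is exactly the non-containment you already proved), at the mild cost of needing GRH for the Dedekind zeta function of the larger field $M$ and a discriminant estimate for it; the paper's version avoids the compositum entirely and works only with $K_{l,\O}$, trading the second Chebotar\"ev application for a concrete Diophantine count. The one place where both arguments are a bit breezy is the final constant-chasing (making the main term dominate the $O(B^{1/2}\log B)$ error uniformly for all $l>7$), but this is genuinely routine once one tracks the $\log c$ contribution to $\log B$, and your acknowledgment that an enlarged $c$ handles finitely many small $l$ (with the first part guaranteeing $N(B)\to\infty$ there) closes that gap correctly.
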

\begin{proof}
For a prime $l>2$, let $\O=\mathcal{F}(l)$ have fraction field $K$, and let $u=[\O_K:\O]$.
The ray class field $K_{l,\O}$ and the ring class field $K_S$ for the order $S=\Z+l^2\O$ are both invariant under the action of
complex conjugation, hence both are Galois extensions of $\Q$.
One finds that
\begin{center}
$\#\Gal(K_{l,\O}/\Q)=2\bigl(l-1\bigr)\bigl(l-\inkron{d_K}{l}\bigr)h(\O)\medspace < \medspace 2l\bigl(l-\inkron{d_K}{l}\bigr)h(\O) = \#\Gal(K_S/\Q),$
\end{center}
and the Chebotar\"ev density theorem~\cite[Thm.~13.4]{Neukirch:AlgebraicNumberTheory} yields the unconditional claim.

To prove the conditional claim, we apply an effective Chebotar\"ev bound to the extension $K_{l,\O}/\Q$, assuming the GRH for the Dedekind zeta function of $K_{l,\O}$.

The extension $K_{l,\O}/K$ is abelian of conductor dividing $lu$, with 
degree $nh(\O)$, where $n\leq 2 \#(\O/l\O)^*\le 2l^2$.
The $\O_K$-ideal $\disc(K_{l,\O}/\O)$ is a divisor of $(lu)^{nh(\O)}$, by 
Hasse's {\it F\"uhrerdiskriminantenproduktformel\/}~\cite[Thm.~VII.11.9]{Neukirch:AlgebraicNumberTheory}.
We then have
\begin{align*}
|\disc(K_{l,\O}/\Q)| &= |N_{K/\Q}(\disc(K_{l,\O}/K))\cdot 
                         \disc(K/\Q)^{[K_{l,\O}:K]} |\\
   &\le (lf)^{2 nh(\O)} |\disc(K/\Q)|^{nh(\O)} \le (c_2l^4)^{nh(\O)},
\end{align*}
where $\disc(\O)\le c_2l^2$.  Using the bound $h(\O)\le c_1l$, Theorem~1.1 of~\cite{Lagarias:Chebotarev} then yields
\begin{equation}\label{LCbound}
\left|\pi(x,K_{l,\O}/\Q)-\frac{\Li(x)}{2nh(\O)}\right|\le c_3\left(x^{1/2}\log(lx)+l^3\log l\right),
\end{equation}
where $\pi(x,K_{l,\O}/\Q)$ counts the primes up to $x\in\R_{>0}$ that split 
completely in $K_{l,\O}$, and $c_3\in \R_{>0}$ is an effectively computable 
constant, independent of $l$.

If we now suppose $x=cl^6(\log l)^4$, and apply $\Li(x)\sim x\log x$ and 
$nh(\O)\le c_1l^3$, we may choose $c\in \R_{>0}$ so that $\Li(x)/(2nh(\O))$ 
is greater than the RHS of~(\ref{LCbound}) by an arbitrarily large constant 
factor. In particular, for any $c_4\in R_{>0}$ there is an effectively 
computable choice of $c$ that ensures $\pi(x,K_{l,\O}/\Q) \ge c_4l^3(\log l)^3$, independent of $l$.
Moreover, for the least such $c$ we have $c/c_4\to 1$ as $c_4\to\infty$.

We now show that most of these primes do not split completely in $K_S$.
Any prime $p$ that splits completely in $K_{l,\O}$ must split completely in the ring class field for $R=\Z+l\O$.
Putting $D=\disc(\O)$, we then have
\begin{equation}\label{normeq1}
4p=t^2-v^2l^2D,
\end{equation}
with $t,v\in\Z_{>0}$ and $t\equiv 2\bmod l$.
If $v\not\equiv 0\bmod l$, then $p$ cannot split completely in $K_S$.
For $p\le cl^6(\log l)^4$, we have $v\le 2c^{1/2}l(\log l)^2$ and 
$t\le 2c^{1/2}l^3(\log l)^2$, since $D\ge l^2$, hence there are at most 
$2c^{1/2}(\log l)^2$ positive $v\equiv 0\bmod l$, and at most 
$2c^{1/2}l^2(\log l)^2+1$ positive $t\equiv 2\bmod l$, that 
satisfy~(\ref{normeq1}).

It follows that no more than $4cl^2(\log l)^4+2c^{1/2}(\log l)^2$ primes 
$p\le cl^6(\log l)^4$ split completely in $K_S$. For a sufficiently large 
choice of $c_4$, we can choose $c$ so that 
$\pi(x,K_{l,\O}/\Q) = \pi (cl^6(\log l)^4,K_{l,\O}/\Q) \ge c_4l^3(\log l)^3$ and also
$$
c_4l^3(\log l)^3-4cl^2(\log l)^4-2c^{1/2}(\log l)^2>c_0l^3(\log l)^3,
$$
provided $l > 7$, since $l/(\log l)$ is bounded above 4 for primes $l > 7$.
\end{proof}

Theorem~\ref{smallprimes} guarantees we can obtain a sufficient number of primes $p$ for use with Algorithm~\ref{alg1}.
In fact, as is typical for such bounds, it provides far more than we need.  The task of finding these primes is addressed in Section~\ref{selectingprimes}.

\subsection{Computing the CM action}\label{CMaction1}\par
The Galois action of $\Gal(K_\O/K)\cong\Pic(\O)$ on the set
$\Ell_\O(\Fp)$ may be explicitly computed using isogenies,
as described in \cite{Belding:HilbertClassPolynomial}.
Let the prime $p$ split completely in the ring class field $K_\O$, 
and let $E/\Fp$ be an elliptic curve with $\End(E)\cong\O$.
Fixing an isomorphism $\End(E)\isar\O$, for each invertible $\O$-ideal $\mf{a}$ we define
$$
E[\mathfrak{a}] = \{ P \in E(\overline \F_p) \mid \forall \tau \in \mathfrak{a} : \tau(P) = 0 \},
$$
the `$\mf{a}$-torsion' subgroup of $E$.
The subgroup $E[\mf{a}]$ is the kernel of a separable isogeny $E\rightarrow E/E[\mf{a}]$ of degree $\idx{\O}{\mf{a}}$, with $\End(E/E[\mf{a}])\cong \O$.  This yields a group action
$$
j(E)^\mathfrak{a} = j(E/E[\mathfrak{a}]),
$$
in which the ideal group of $\O$ acts on the set $\Ell_\O(\Fp)$.
This action factors through the class group, and the $\Pic(\O)$-action is transitive and free.  Equivalently,
$\Ell_\O(\Fp)$ is a \emph{torsor} for $\Pic(\O)$; for each pair $(j_1,j_2)$ of elements in $\Ell_\O$ there is a unique
element of $\Pic(\O)$ whose action sends $j_1$ to $j_2$.

Now let $\mfl_0$ be an invertible $\O$-ideal of prime norm $l_0\not= p$.
The curves $E$ and $E/E[\mfl_0]$ are $l_0$-isogenous, hence
$$
\Phi_{l_0}(j_0,j_0^{{\mfl}_0})= 0,
$$
where $j_0=j(E)$.
To compute the action of $\mfl_0$, we need to find the corresponding root of $\Phi_{l_0}(X,j_0)\in\Fp[X]$.
We assume that $\Phi_{l_0}(X,Y)$ is known, either via one of the algorithms 
from the introduction, or by a previous application of Algorithm~\ref{alg2}.
The polynomial $\Phi_{l_0}(X,j_0)\in\Fp[X]$ has either 1 or 2 roots that lie 
in $\Ell_\O(\Fp)$, depending on whether $l_0$ ramifies or splits (it is not inert).
These roots correspond to the actions of $\mfl_0$ and its inverse $\mfl_0^\invexp$, which coincide when $l_0$ ramifies. 

Our fixed isomorphism $\End(E)\isar\O$ maps the Frobenius endomorphism of $E$ to an element $\pi_p\in\O\subset\O_K$ with norm $p$.
We then have the norm equation
\begin{equation}\label{norm-equation}
4p=t^2-v^2d_K,
\end{equation}
where $t=\tr(\pi_p)$, and $v$ is the index of $\Z[\pi_p]$ in $\O_K$.
When $l_0$ does not divide $v$, the order $\Z[\pi_p]$ is 
maximal at $l_0$ and the only roots of $\Phi_{l_0}(X,j_0)$ over $\Fp$ are those 
in $\Ell_\O(\Fp)$.
Otherwise $\Phi_{l_0}(X,j_0)$ has $l+1$ roots in $\Fp$, and those in $\Ell_\O(\Fp)$ lie on the surface of the $l_0$-volcano containing~$j$, as described in \cite{Fouquet:IsogenyVolcanoes}.
The roots on the surface can be readily distinguished, as in~\cite[\S4]{Sutherland:HilbertClassPolynomials}, for example, but typically we choose $p$ with $l_0\nmid v$ so that every root of $\Phi_{l_0}(X,j_0)$ in $\Fp$ is on the surface.

When $l_0$ splits and does not divide $v$, the actions of $\mfl_0$ 
and $\mfl_0^\invexp$ may be distinguished as described in~\cite[\S5]{Broker:pAdicClassPolynomial} and~\cite[\S3]{Galbraith:GHSattack}.
The kernels of the two $l_0$-isogenies are subgroups of $E[l_0]$.
A standard component of the SEA algorithm computes a polynomial $F_{l_0}(X)$, whose roots are the abscissa of the points in one of these kernels~\cite{Elkies:AtkinBirthday,Schoof:ECPointCounting2}.
In our setting $l_0$ splits in $\Z[\pi_p]$, and provided $l_0\nmid v$, the action of $\pi_p$ on $E[l_0]$ has two distinct eigenvalues corresponding to the two kernels.
Expressing the ideal $\mfl_0$ in the form $(l_0,c+d\pi_p)$ yields the eigenvalue $\lambda=-c/d \bmod l_0$.
We may then use $F_{l_0}(X)$ to test whether $\pi_p$'s action is equivalent to multiplication by $\lambda$ in the corresponding kernel.  See~\cite{Broker:pAdicClassPolynomial} for an example and further details.

As a practical optimization (see Section~\ref{selectorder}), we avoid the need to ever make this distinction.
The asymptotic complexity of computing the action of $\mfl_0$ is the same in any case.

\begin{lemma}\label{CMactioncost}
Let $l_0$ and $p$ be distinct odd primes, and let $\mathcal{O}\ne\Z[i],\Z[\zeta_3]$ be an imaginary quadratic order.
Let $j_0=j(E)\in\Ell_\O(\Fp)$, fix an isomorphism $\End(E)\isar\O$, and let $\pi_p\in\O$ denote the image of the Frobenius endomorphism.
Let $\mfl_0$ be an invertible $\O$-ideal of norm $l_0$, and assume $\Z[\pi_p]$ is maximal at $l_0$.

Given $\Phi_{l_0}\in\Fp[X,Y]$, the $j$-invariant $j_0^{\mfl_0}$ may be computed using an expected
$O(l_0^2+\text{\emph{$\M(l_0)$}}\log p)$ operations in $\Fp$.
\end{lemma}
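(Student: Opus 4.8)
The plan is to compute $j_0^{\mfl_0}$ by finding the appropriate root of $\Phi_{l_0}(X,j_0)\in\Fp[X]$, and the key is to organize the work so that the two cost terms $O(l_0^2)$ and $O(\M(l_0)\log p)$ are separately accounted for. First I would form the univariate specialization $\Phi_{l_0}(X,j_0)$: given $\Phi_{l_0}\in\Fp[X,Y]$ as a polynomial with $O(l_0^2)$ coefficients in $\Fp$, evaluating at $Y=j_0$ costs $O(l_0^2)$ operations in $\Fp$ (for each of the $l_0+2$ coefficients of $X^k$, evaluate a degree $\le l_0+1$ polynomial in $Y$ at $j_0$ by Horner). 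That accounts for the $O(l_0^2)$ term. Next, since $\Z[\pi_p]$ is maximal at $l_0$, the discussion preceding the lemma tells us that every root of $\Phi_{l_0}(X,j_0)$ in $\Fp$ lies in $\Ell_\O(\Fp)$, and there are exactly $1+\inkron{d_K}{l_0}$ of them (one if $l_0$ ramifies, two if it splits; the inert case is excluded because $\mfl_0$ has norm $l_0$, so $l_0$ is not inert). To isolate the $\Fp$-rational roots, compute $g(X)=\gcd\bigl(\Phi_{l_0}(X,j_0),\,X^p-X\bigr)$; forming $X^p\bmod \Phi_{l_0}(X,j_0)$ by repeated squaring costs $O(\M(l_0)\log p)$ operations in $\Fp$, and the subsequent $\gcd$ and root extraction of the resulting $O(1)$-degree polynomial $g$ costs an expected $O(\M(l_0)\log l_0)$ (negligible, or absorbable into $O(l_0^2)$) operations in $\Fp$. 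This gives the $O(\M(l_0)\log p)$ term.

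If $l_0$ ramifies, $g$ has a single root (with multiplicity, but the distinct root is unique) and that root is $j_0^{\mfl_0}=j_0^{\mfl_0^{\invexp}}$, so we are done. If $l_0$ splits, $g=(X-j_1)(X-j_2)$ and we must decide which of $j_1,j_2$ is $j_0^{\mfl_0}$ and which is $j_0^{\mfl_0^{\invexp}}$. As explained in the excerpt (following \cite{Broker:pAdicClassPolynomial,Galbraith:GHSattack}), this is done via the eigenvalue method: write $\mfl_0=(l_0,c+d\pi_p)$, so the eigenvalue of $\pi_p$ on the kernel of the isogeny with kernel $E[\mfl_0]$ is $\lambda=-c/d\bmod l_0$; compute the SEA-type kernel polynomial $F_{l_0}(X)$ of degree $(l_0-1)/2$ associated to one of the two roots (say $j_1$) using V\'elu-type formulas, and test whether $\pi_p$ acts as multiplication by $\lambda$ on that kernel by checking a point-coordinate identity modulo $F_{l_0}$. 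Computing $F_{l_0}$ from a candidate isogenous curve and performing the eigenvalue test each cost $O(\M(l_0)\log p)$ operations in $\Fp$ (the $\log p$ coming from computing the $x$-coordinate of $\pi_p$-images, i.e. $p$-power Frobenius, modulo a degree-$O(l_0)$ polynomial). Either $j_1$ passes the test, in which case $j_0^{\mfl_0}=j_1$, or it fails, in which case $j_0^{\mfl_0}=j_2$; in both cases the total extra cost is $O(\M(l_0)\log p)$, matching the claimed bound.

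The main obstacle is the bookkeeping in the split case: one must be careful that computing the V\'elu kernel polynomial $F_{l_0}$ and running the eigenvalue test genuinely fit inside $O(\M(l_0)\log p)$, rather than something like $O(l_0\M(l_0))$ or $O(l_0^2\log p)$. The key points are that $F_{l_0}$ has degree $(l_0-1)/2$, that it can be obtained from a single candidate isogeny via the standard SEA routines in $O(\M(l_0)\log p)$ (the $\log p$ for a square-root or Frobenius step), and that the eigenvalue verification reduces to one modular composition / $p$-th power computation modulo a degree-$O(l_0)$ polynomial, which is $O(\M(l_0)\log p)$. I would cite \cite{Elkies:AtkinBirthday,Schoof:ECPointCounting2,Broker:pAdicClassPolynomial} for these subroutine complexities rather than reprove them. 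Everything else — the specialization, the $\gcd$ with $X^p-X$, and root-finding on a constant-degree polynomial — is routine and contributes only $O(l_0^2)$ and $O(\M(l_0)\log p)$ respectively, so summing the two regimes yields the stated $O(l_0^2+\M(l_0)\log p)$ expected operations in $\Fp$.
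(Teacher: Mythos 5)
Your proposal matches the paper's proof essentially step for step: specialize $\Phi_{l_0}$ at $Y=j_0$ in $O(l_0^2)$ operations, compute $\gcd(X^p-X,\Phi_{l_0}(X,j_0))$ via $X^p\bmod f$ in $O(\M(l_0)\log p)$ plus a fast gcd in $O(\M(l_0)\log l_0)$, observe the gcd has degree at most 2 by maximality of $\Z[\pi_p]$ at $l_0$, and in the split case distinguish $j_0^{\mfl_0}$ from $j_0^{\mfl_0^{\invexp}}$ via the eigenvalue $\lambda=-c/d\bmod l_0$ and a kernel-polynomial Frobenius test, all within the claimed bound. The only cosmetic difference is the citation for the kernel-polynomial subroutine (the paper invokes Bostan--Morain--Salvy--Schost for $O(\M(l_0))$ where you allow the looser $O(\M(l_0)\log p)$), which does not affect the final bound.
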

Here $\M(n)$ denotes the complexity of multiplying two polynomials of degree less than $n$, as in~\cite[Def.~8.26]{Gathen:ComputerAlgebra}.
Na\"ively, $\M(n)=O(n^2)$, Karatsuba's algorithm yields $\M(n)=O(n^{\log_2 3})$, and methods based on the fast Fourier transform (FFT) achieve $\M(n)=O(n\log n\log\log n)$.
\begin{proof}
We first compute $\gcd(X^p-X,\Phi_{l_0}(X,j_0))$, the product of the
distinct linear factors of $\Phi_{l_0}(X,j_0)$ over $\Fp$.  Instantiating
$f(X)=\Phi_{l_0}(X,j_0)$ uses $O(l_0^2)$ operations in $\Fp$, exponentiating
$X^p\bmod f$ uses $O(\M(l_0)\log p)$ operations in $\Fp$, and the fast
Euclidean algorithm~\cite[\S11.1]{Gathen:ComputerAlgebra} obtains
$\gcd(X^p-X,f)$ using $O(\M(l_0)\log l_0)=O(l_0^2)$ operations in $\Fp$.
This gcd has degree at most 2, since $\Z[\pi_p]$ is maximal to $l_0$,
and we may find its roots using an expected $O(\log p)$ $\Fp$-operations
\cite[Cor.~14.16]{Gathen:ComputerAlgebra}.

The desired root $j_0^{\mfl_0}$ is then distinguished as outlined above.
We first compute the eigenvalue $\lambda-c/d\bmod l_0$, where $\mfl_0=(l_0,c+d\pi_p)$, using $O(l_0^2)$ bit operations.
Applying~\cite[Thm.~2.1]{Bostan:FastIsogenies}, the kernel polynomial $F_{l_0}(X)$ can be computed using $O(\M(l_0))$ operations in $\Fp$.
To compare $(X^p,Y^p)$ to the scalar multiple $\lambda\cdot(X,Y)$, we compute $X^p$, $Y^p$, and the required division polynomials $\psi_n(X,Y)$, modulo $F_{l_0}(X)$ and the curve equation for $E$, as in the SEA algorithm~\cite[Ch.~VII]{Blake:EllipticCurves}.
This uses $O((\log l_0 + \log p)\M(l_0))=O(l_0^2+\M(l_0)\log p)$ operations in $\Fp$.
\end{proof}

\section{Mapping the CM torsor}
The previous section made explicit the Galois action corresponding to an
element of $\Pic(\O)\cong\Gal(K_\O/K)$ represented by an ideal $\mathfrak{l}_0$ of prime norm $l_0$.
We now use this to enumerate the set $\Ell_\O(\Fp)$, and at the same time compute a map that
explicitly identifies the action of each element of $\Pic(\O)$.
To do this efficiently it is critical to work with generators whose norms are small, since
the cost of computing the action of $\mathfrak{l}_0$ increases quadratically with its norm.

\subsection{Polycyclic presentations}\label{polycyclic}
As a finite abelian group, each element of $\Pic(\O)$ can be uniquely represented using a basis.
However, as noted in~\cite[\S5.3]{Sutherland:HilbertClassPolynomials}, the norms arising in
a basis may need to be much larger than those in a set of generators.
Thus we are led to consider polycyclic presentations.

Let $\vec{\alpha}=(\alpha_1,\ldots,\alpha_k)$ be a sequence of generators for $G$, and let $G_i=\langle\alpha_1,\ldots,\alpha_i\rangle$ denote the subgroup generated by $\alpha_1,\ldots,\alpha_i$.
The composition series
$$
1=G_0\le G_1 \le \cdots\le G_{k-1} \le G_{k} = G,
$$
is then polycyclic, meaning that each quotient $G_{i+1}/G_i$ is a cyclic group.  The sequence $r(\vec{\alpha})=(r_1,\ldots,r_k)$ of \emph{relative orders} for $\vec{\alpha}$ is defined by
$$
r_i=|G_i:G_{i-1}|.
$$
Each $r_i$ necessarily divides $|\alpha_i|$, and for $i>1$ we typically have $r_i < |\alpha_i|$.
The sequences $\vec{\alpha}$ and $r(\vec{\alpha})$ allow us to uniquely represent each $\beta\in G$ in the form
\begin{equation}\label{pcprep}
\beta=\vec{\alpha}^\vec{x}=\alpha_1^{x_1}\cdots\alpha_k^{x_k},
\end{equation}
where $\vec{x}=(x_1,\ldots,x_k)$ with $0\le x_i <r_i$.
The vector $s(\vec{\alpha},i)=\vec{x}$ for which $\alpha_i^{r_i}=\vec{\alpha}^\vec{x}$ has $x_j=0$ for $j\ge i$, and is called a \emph{power relation}, see \cite[\S8.1]{Holt:CGTHandbook}.
A generic algorithm to compute $r(\vec{\alpha})$ and the $s(\vec{\alpha},i)$ can be found in~\cite[Alg.~2.1]{Sutherland:HilbertClassPolynomials}.

The vector $\vec{x}$ in (\ref{pcprep}) is the discrete logarithm or \emph{exponent vector} of $\beta$.  We let
$$
X(\vec{\alpha})=\{\vec{x}\in\Z^k:0\le x_i<r_k\},
$$
and note that the map $\vec{x}\mapsto\vec{\alpha}^\vec{x}$ defines a bijection from $X(\vec{\alpha})$ to $G$.
\medskip

We now consider the case $G=\Pic(\O)$, where $\O$ is an order in a quadratic field $K$ of discriminant $d_K<-4$.
Let $\mathcal{P}=(p_1,p_2,p_3,\ldots)$ be an increasing sequence of primes with the property that $\Pic(\O)$ is generated by the classes of invertible ideals with norms in $\mathcal{P}$.  
By Dirichlet's density theorem~\cite[Thm.~9.12]{Cox:ComplexMultiplication}, any sequence containing all but a finite set of primes works,
and from~\cite[Cor.~7.17]{Cox:ComplexMultiplication} we know that some finite prefix of $\mathcal{P}$ actually suffices.
For $\O=\O_K$ we may take $\mathcal{P}$ to be the sequence of primes less than $|d_K/3|^{1/2}$, by~\cite[Prop.~9.5.2]{Buchmann:BinaryQuadraticForms}.

There is a unique lexicographically minimal subsequence $(l_1,\ldots,l_k)$ of $\mathcal{P}$ that corresponds to 
a polycyclic sequence $\vec{\alpha}=(\alpha_1,\ldots,\alpha_k)$ for $\Pic(\O)$ in which $\alpha_i$ is represented by an ideal of norm $l_i$
and $r(\vec{\alpha})$ has $r_i>1$.
When $l_i$ splits there are two possibilities for $\alpha_i$.
To fix a choice, let $\alpha_i$ be the ideal class represented by the unique binary quadratic form $ax^2+bxy+cy^2$ of discriminant $D=\disc(\O)$ with $a=l_i$ and $b$ nonnegative~\cite[\S3.4]{Buchmann:BinaryQuadraticForms}, corresponding to the ideal $\mfl_i=(l_i,(-b+\sqrt{D})/2)$.

We call $\vec{\alpha}$ the \emph{polycyclic presentation} of $\Pic(\O)$ determined by $\mathcal{P}$.
We use $l(\vec{\alpha})$ to denote the sequence of norms $(l_1,\ldots,l_k)$, but note that this also depends on $\mathcal{P}$; each $l_i$ is the least prime in $\mathcal{P}$ that is the norm of an ideal in $\alpha_i$.

We may compute $\vec{\alpha}$ by applying~\cite[Alg.~2.1]{Sutherland:HilbertClassPolynomials} to an implicit sequence of generators $\vec{\gamma}=(\gamma_1,\gamma_2,\gamma_3,\ldots)$ corresponding to the subsequence of $\mathcal{P}$ for which there exists an invertible $\O$-ideal of norm $p_i$.
The algorithm computes $r_i$ for each $\gamma_i$ in turn, and if we find that $r_i>1$, we append $\gamma_i$ to an initially empty vector~$\vec{\alpha}$.  We terminate when $\prod r_i=h(\O)$, a value which we assume has been precomputed.

The computation of $\vec{\alpha}$ uses $|G|=h(\O)$ group operations in $G=\Pic(\O)$, and creates a table $T:X(\vec{\alpha})\to G$ that stores $|G|=h(\O)$ group elements~\cite[Prop.~6]{Sutherland:HilbertClassPolynomials}.
Using binary quadratic forms to represent $\Pic(\O)$, the group operation has bit-complexity $O(\log^2|\disc(O)|)$, as shown in~\cite{Biehl:FormReductionComplexity}, and each element may be stored in $O(\log|\disc(\O)|)$ space.
Evaluating $T(\vec{x})$, or $T^{-1}(\beta)$, has bit-complexity $O(\log|G|)$.

\subsection{Suitable presentations}
When $\mathcal{P}$ is the sequence of all primes, the norms $l(\vec{\alpha})$ for the polycyclic presentation $\vec{\alpha}$ of $\Pic(\O)$ determined by $\mathcal{P}$ are as small as possible.
However, when working in the finite field $\Fp$ we may wish to ensure that each norm $l_i$ does not divide $v=[\O_K:\Z[\pi_p]]$, as noted in Section~\ref{CMaction1}.
This is achieved by excluding from $\mathcal{P}$ primes that divide $v$, and we call the corresponding $\vec{\alpha}$ the presentation of $\Pic(\O)$ \emph{suitable} for $p$.  This may cause us to use norms that are slightly larger than optimal.  We now show that, provided we work
with a family of orders that satisfies certain (easily met) constraints, the norms in every suitable presentation are quite small, assuming the GRH.

\begin{theorem}\label{suitablepresentation}
Let $c_3\in\R_{>0}$ be a fixed constant, and let $\mathcal{F}$ be a suitable family of orders with the following additional property: if $\O$ is an order in $\mathcal{F}$ whose fraction field $K$ has discriminant $d_K$, and $s_\O$ denotes the square-free part of $[\O_K:\O]$, then $s_\O$ is coprime to $2d_k$ and both $s_\O$ and $|d_K|$ are bounded by $c_3$.

Then under the GRH, for every $\O$ in $\mathcal{F}$ and every prime $p$ that splits completely in $K_\O$, the presentation $\vec{\alpha}$ of $\Pic(\O)$ suitable for $p$ has norms $l(\vec{\alpha})$ for which
$$\max l(\vec{\alpha})\le c\omega(v)\log (\omega(v)+1),$$
where $v$ is defined by $4p=t^2-v^2\disc(\O)$, the function $\omega(v)$ counts the distinct prime factors of $v$, and $c$ is an effective constant that depends only on $c_3$.
\end{theorem}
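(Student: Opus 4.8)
The plan is to reduce the bound on $\max l(\vec{\alpha})$ to a density estimate in a single \emph{fixed} number field, by invoking the reduction principle behind Theorem~\ref{theorders}. Unwinding the definitions first: the presentation $\vec{\alpha}$ suitable for $p$ is the polycyclic presentation of $\Pic(\O)$ determined by the sequence $\mathcal{P}$ of rational primes $q$ that split in $K$ — so that $q$ is the norm of an invertible $\O$-ideal — and divide neither the conductor $u=[\O_K:\O]$ nor $v$. Its norm sequence $l(\vec{\alpha})$ is the subsequence of $\mathcal{P}$ consisting of those $q$ whose ideal class is not generated by the classes of the smaller members of $\mathcal{P}$, so $\max l(\vec{\alpha})$ is exactly the least $B$ for which the classes of the invertible ideals of norm in $\mathcal{P}\cap[1,B]$ generate $\Pic(\O)$; it therefore suffices to bound this $B$.

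The key step is to replace $\Pic(\O)$ by a fixed finite group. As $u$ is $O_{c_3}(1)$-smooth (which, for the families of interest, follows from the bound on $s_\O$; cf.\ the remark after Theorem~\ref{theorders}), every order between $\O$ and $\O_K$ has cyclic kernel over the next larger order in the tower — being a subgroup of a $\ker\varphi$ that is cyclic by Lemma~\ref{cyclic}, the relevant primes being odd and unramified. Let $R'\supseteq\O$ be the order whose conductor is obtained from $u$ by reducing every exponent in its prime factorization to at most $2$; then $|\disc(R')|=O_{c_3}(1)$. The argument of Theorem~\ref{theorders} — together with its extension to $b$-smooth conductors noted after it, and using exactly the cyclic step-kernels of Lemma~\ref{cyclic} and their explicit generators from Lemma~\ref{generators} — shows that for \emph{any} set $\mathcal{Q}$ of primes coprime to $u$, the classes $\{[\mathfrak{q}_{\O}]:q\in\mathcal{Q}\}$ generate $\Pic(\O)$ if and only if the classes $\{[\mathfrak{q}_{R'}]:q\in\mathcal{Q}\}$ generate $G:=\Pic(R')$ — the ``only if'' being automatic, since $G$ is a quotient of $\Pic(\O)$. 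Hence $B$ equals the least bound for which the classes $[\mathfrak{q}_{R'}]$, over split $q\le B$ with $q\nmid uv$, generate the fixed finite group $G$.

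Finally I would count primes in the ring class field $K_{R'}$, a \emph{fixed} abelian extension of $K$ with $|\disc(K_{R'})|=O_{c_3}(1)$. Since $|G|=O_{c_3}(1)$, the maximal subgroups of $G$ have index bounded in terms of $c_3$, and for each maximal $H\le G$ the condition ``$[\mathfrak{q}_{R'}]\in H$'' says precisely that $q$ splits completely in the fixed subfield $L_H\subseteq K_{R'}$ fixed by $H$, where $|\disc(L_H)|=O_{c_3}(1)$. Applying effective Chebotar\"ev in $L_H$ — here the GRH makes the constant effective — yields a $\delta=\delta(c_3)>0$ such that at least $\delta\,\pi(X)$ primes $q\le X$ split in $K$ with $[\mathfrak{q}_{R'}]\notin H$, once $X\ge X_0(c_3)$. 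Of the primes $q\le X$ that split in $K$, at most $\omega(uv)+O(1)$ fail to lie in $\mathcal{P}$, and since $u$ is $O_{c_3}(1)$-smooth we have $\omega(u)=O_{c_3}(1)$, so $\mathcal{P}$ omits at most $\omega(v)+O_{c_3}(1)$ of them. Consequently, as soon as $\delta\,\pi(X)>\omega(v)+O_{c_3}(1)$ — that is, by the prime number theorem, once $X\ge c\,\omega(v)\log(\omega(v)+1)$ for a suitable effective $c=c(c_3)$ — the admissible split primes up to $X$ have, for every maximal $H\le G$, a member with class outside $H$, so they generate $G$, hence generate $\Pic(\O)$. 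This gives $\max l(\vec{\alpha})\le c\,\omega(v)\log(\omega(v)+1)$.

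The main obstacle is the middle step: identifying the correct base order $R'$ and proving the Theorem~\ref{theorders} reduction in the strong form needed — for \emph{arbitrary} generating sets (rather than the single ideal class treated in Theorem~\ref{theorders}) and for $b$-smooth rather than prime-power conductors — so that generation of $\Pic(\O)$ is faithfully detected, prime by prime, by generation of the fixed finite group $\Pic(R')$. Once that reduction is in place, the rest is a routine density estimate in a fixed number field, and the factor $\omega(v)$ enters only as the cost of having to skip the primes that divide $v$.
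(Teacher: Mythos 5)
Your proposal is correct and takes essentially the same route as the paper: both reduce to a fixed order of bounded discriminant via the Theorem~\ref{theorders} machinery and then apply an effective Chebotar\"ev estimate under GRH to show that each class (resp.\ each complement of a maximal subgroup) contains a prime not dividing $v$ of norm $O(\omega(v)\log(\omega(v)+1))$. Your ``main obstacle'' is already dispatched by the paper's statements: Theorem~\ref{theorders} is formulated for an \emph{arbitrary} set $\mathcal{P}$ of primes rather than a single ideal class, and the remark after it supplies the $b$-smooth-conductor extension you need, so the reduction you describe holds; the paper's choice of $R$ with conductor $s_\O^2$ plays exactly the role of your $R'$.
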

\begin{proof}
Let $\O$, $p$, $v$, and $\vec{\alpha}$ be as above.
Let $R$ be the order of index $s_\O^2$ in $\O_K$, and let $\vec{\gamma}$ be the presentation of $R$ determined by the increasing sequence of primes that do not divide $v$.
It follows from Theorem~\ref{theorders} that $\max l(\vec{\alpha}) \le \max l(\vec{\gamma})$.

Let $K_R$ be the ring class field for $R$, and let $\pi_C(x,K_R/\Q)$ count the primes bounded by $x\in\R_{>0}$ whose Frobenius symbol (under the Artin map) lies in the conjugacy class $C$ of $\Gal(K_R/\Q)$.
We may bound $\idx{K_R}{\Q}$, $\#\Gal(K_R/\Q)$, and $\disc(K_R)$ by constants that depend only on $c_3$, independent of $\mathcal{O}$.
Under the GRH, the Chebotar\"ev bound of~\cite[Thm.~1.1]{Lagarias:Chebotarev} then yields
$$
\pi_C(x,K_R/\Q) \ge c_4x/\log x,
$$
for some effective constant $c_4\in\R_{>0}$ and all $x > 2$, where $c_4$ depends only on~$c_3$.
For an effective constant $c$ depending on $c_3$, setting $x=c\omega(v)\log(\omega(v)+1)$ yields $\pi_C(x,K_R/\Q) > \omega(v)$.  In this case the Frobenius symbol of at least one prime not dividing $v$ lies in $C$, and this applies to every $C$.  It follows that every class in $\Pic(R)$ contains an element whose norm is a prime bounded by $x$ that does not divide $v$.
We then have $\max l(\vec{\alpha}) \le \max l(\vec{\gamma}) \le x=c\omega(v)\log(\omega(v)+1)$, as desired.
\end{proof}

The family of orders in Example~\ref{exam} satisfies the requirements of Theorem~\ref{suitablepresentation}.
We note that provided $\log p = O(\log l)$, we have $\omega(v)=O(\log l/\log\log l)$, and the theorem then yields an $O(\log l)$ bound on the norms $l(\vec{\alpha})$.
This is sharper than the more general $O(\log^2 l)$ bound implied by~\cite{Bach:ERHbounds}.
In fact, by~\cite[Thm.~431]{Hardy:NumberTheory}, one expects $\omega(v)=O(\log\log p)$, which yields a bound of $O(\log\log l \log\log\log l)$.

\subsection{Realizing the CM torsor}\label{CMaction2}
We now consider how to explicitly map $\Pic(\O)$ to the torsor $\Ell_\O(\Fp)$, so that we may then compute the action of any element or subgroup of $\Pic(\O)$ on any element of $\Ell_\O(\Fp)$, \emph{without needing to compute any further isogenies}.
We use the presentation $\vec{\alpha}$ of $\Pic(\O)$ suitable for~$p$, and the table $T:X(\vec{\alpha})\to\Pic(\O)$ described in Section~\ref{polycyclic}.
As above, we have $\vec{\alpha}=([\mfl_1],\ldots,[\mfl_k])$, with norms $l(\vec{\alpha})=(l_1,\ldots,l_k)$ and relative orders $r(\vec{\alpha})=(r_1,\ldots,r_k)$. We assume the modular polynomials $\Phi_{l_1},\ldots,\Phi_{l_k}$ are known, since the $l_i$ are small.

To enumerate $\Ell_\O(\Fp)$ we use~\cite[Alg.~1.3]{Sutherland:HilbertClassPolynomials}, but we augment this algorithm to also compute an explicit bijection $\phi\text{\hspace{2pt}:\hspace{1pt}}\Pic(\O)\to\Ell_\O(\Fp)$ in which $[\mathfrak{a}]\in\Pic(\O)$ corresponds to $j_0^{\mathfrak{a}}$.
Given $j_0\in\Ell_\O(\Fp)$, we compute a path of $l_k$-isogenies
\begin{equation}\label{isogenypath}
j_0\mapright{\mfl_k}j_1\mapright{\mfl_k}j_2\mapright{\mfl_k}\cdots\mapright{\mfl_k}j_{r_k-1},
\end{equation}
where the $j_i$ are distinct elements of $\Ell_\O(\Fp)$.
As explained in Section~\ref{CMaction1}, each step in this path is computed by finding a root $j_i$ of $\Phi(X,j_{i-1})$ that lies in $\Ell_\O(\Fp)$.
When $l_k$ splits in $K$ we have two choices for $j_1$, and the correct choice may be determined using a kernel polynomial as outlined in Section~\ref{CMaction1}.  For $i>1$ we use the polynomial $\Phi(X,j_{i-1})/(X-j_{i-2})$, which has exactly one root $j_i\in\Ell_\O(\Fp)$.

When $k>1$, the enumeration of $\Ell_\O(\Fp)$ proceeds recursively:
for each $j_i$ in (\ref{isogenypath}) we compute a path of $l_{k-1}$ isogenies containing $r_{k-1}$ distinct $j$-invariants.
Eventually, every element of $\Ell_\O(\Fp)$ is enumerated exactly once~\cite[Prop.~5]{Sutherland:HilbertClassPolynomials}.
For each $j_n\in\Ell_\O(\Fp)$ we also compute a vector $\vec{x}\in X(\vec{\alpha})$ that describes the path used to reach $j_n$ from $j_0$, where $x_i$ indicates the number of steps taken on an $l_i$-isogeny path.
By correctly choosing the direction of each path, we ensure that each $j_n$ is the image of $j_0$ under the $\Pic(\O)$-action of $\vec{\alpha}^{\vec{x}}=\alpha_1^{x_1}\cdots\alpha_k^{x_k}$.
This yields the desired bijection $\phi$; since $\vec{\alpha}^{\vec{x}}$ uniquely represents some $\beta\in\Pic(\O)$, we may set $\phi(\vec{\alpha}^{\vec{x}})=j_n$, a process facilitated by the map $T:X(\vec{\alpha})\to\Pic(\O)$.

The bijection $\phi$ allows us translate any computation in the group $\Pic(\O)$ to the torsor $\Ell_\O(\Fp)$.  In particular, by enumerating the cyclic subgroup $H\subseteq\Pic(\O)$ generated by $[\mfl]$, where $\mfl$ is an ideal of norm $l$, we obtain the $l$-isogeny cycle containing~$j_0$, corresponding to the surface of one of the $l$-volcanoes in Figure~1.  Doing the same for each coset of $H$ partitions $\Ell_\O(\Fp)$ into $l$-isogeny cycles.

This may also be applied to the order $R=\Z+l\O$.  After obtaining a bijection from $\Pic(R)$ to $\Ell_R(\Fp)$, we enumerate the kernel of the map $\varphi\text{\hspace{2pt}:\hspace{1pt}}\Pic(R)\to\Pic(O)$ from the exact sequence of (\ref{exactsequence}).  Here we use one of the generators of norm $l^2$ guaranteed by Lemma~\ref{generators}.  Enumerating the cosets of $\ker \varphi$ then partitions $\Ell_R(\Fp)$ into $l^2$-isogeny cycles of siblings with a common $l$-isogenous parent in $\Ell_\O(\Fp)$.

\section{The algorithm}
We now present our algorithm to compute the modular polynomial $\Phi_l$ using the Chinese Remainder Theorem (CRT).
Algorithm~\ref{alg2} follows the standard pattern of a CRT-based algorithm; the details lie in Algorithm~\ref{alg21}, which selects a set of primes $S$, and in Algorithm~\ref{alg1}, which computes $\Phi_l$ modulo each prime $p\in S$.

The computation of $\Phi_l\in\Z[X,Y]$ may be viewed as a special case of computing $\Phi_l\in(\Z/m\Z)[X,Y]$, where $m$ is the product of the primes in $S$.  The choice of $S$ ensures that this $m$ is large enough to uniquely determine $\Phi_l\in\Z[X,Y]$.

\renewcommand\labelenumi{\theenumi.}
\renewcommand\labelenumii{\theenumii.}
\begin{algorithm}\label{alg2}
{Let $l$ be an odd prime, let $m$ be a positive integer, and let $\O=\mathcal{F}(l)$ lie in a suitable family of orders $\mathcal{F}$.
Compute $\Phi_l\in(\Z/m\Z)[X,Y]$ as follows:}
\begin{enumerate}
\item
Compute the Hilbert class polynomial $H_\O\in\Z[X]$.
\item
Select a set of primes $S$ with Algorithm~\ref{alg21}, using $l$ and $\O$.
\item
Perform CRT precomputation using $S$.
\item
For each prime $p\in S$:
\begin{enumerate}
\item
Compute $\Phi_l\bmod p$ with Algorithm~\ref{alg1}, using $\O$ and $H_\O$.
\item
Update CRT data using $\Phi_l\bmod p$.
\end{enumerate}
\item
Perform CRT postcomputation.
\item
Output $\Phi_l\in(\Z/m\Z)[X,Y]$.
\end{enumerate}
\end{algorithm}
The suitable family of orders $\mathcal{F}$ is as defined in Section~\ref{suitableprimes}, see Definition~\ref{suitableorders}.
The polynomial $H_\O$ computed in Step~1 may be obtained using any of several algorithms whose running time is quasi-linear in $\disc(\O)$, including~\cite{Broker:pAdicClassPolynomial,Enge:FloatingPoint,Sutherland:HilbertClassPolynomials}.

\subsection{Selecting primes}\label{selectingprimes}
The primes $p$ in the set $S$ selected by Algorithm~\ref{alg2} must satisfy the conditions of Theorem~\ref{theprimes} in order to use them in Algorithm~\ref{alg1}.
We require $p$ to split completely in the ray class field $K_{l,\mathcal{\O}}$, but to not split completely in the ring class field for the order $\Z+l^2\O$.  Equivalently, we need $p\nmid D$ to satisfy
\begin{equation}\label{normeq2}
4p=t^2-v^2l^2D,
\end{equation}
with $t\equiv 2\bmod l$ and $l\nmid v$, where $D=\disc(\O)$.  To apply the CRT, we also require
\begin{equation}\label{primeset}
\prod_{p\in S}\log p\ge 4|c|,
\end{equation}
for every coefficient $c$ of $\Phi_l\in\Z[X,Y]$.  From~\cite{BrokerSutherland:PhiHeightBound}, we use the explicit bound
\begin{equation}\label{heightbound}
B_l = 6l\log l + 18l
\end{equation}
on the logarithmic height of $\Phi_l$ to achieve this.  We then have $\#S=O(l)$, by (\ref{normeq2}).

Heuristically, it is easy to find primes that satisfy (\ref{normeq2}).  If $D\equiv 1\bmod 8$, fix $v=2$, otherwise fix $v=1$.
Then, for increasing $t\equiv 2\bmod l$ with the correct parity, test whether $p=(t^2-v^2l^2D)/4$ is prime.
We expect to need $O(l\log l)$ primality tests, and each can be accomplished in time polynomial in $\log l$,
although typically $p$ is small enough to make an attempted factorization more efficient.
We could obtain slightly smaller $p$'s by letting $v$ vary, but it is more convenient to fix $v$
so that we can use the same presentation of $\Pic(\O)$ and $\Pic(R)$ for every $p$.
This approach is easy to implement and very fast in practice.
\medskip

However, in order to prove Theorem~\ref{main-thm} we must take a more cautious approach.
Even assuming the GRH, we cannot guarantee we will find \emph{any} primes with a fixed value of $v$.
On the other hand, Theorem~\ref{smallprimes} implies that if we construct random integers $p\le x$ satisfying (\ref{normeq2}), for sufficiently large $x$ we have $p$ prime with probability $\Omega(1/\log x)$, and under the GRH, $x=O(l^6(\log l)^4)$ is large enough.
Additionally, we would like to avoid $v$'s with many prime factors, so that we may more profitably apply Theorem~\ref{suitablepresentation}.
The restriction $\omega(v)\le 2\log(\log v+3)$ eliminates an asymptotically negligible proportion of the integers $v\in[1,x]$ (see Lemma~\ref{HWbound}).
\smallbreak

We now present Algorithm~\ref{alg21}, emphasizing that its purpose is to facilitate the proof of Theorem~\ref{main-thm}.
In practice we use the heuristic procedure described above.

\renewcommand\labelenumi{\theenumi.}
\renewcommand\labelenumii{\theenumii.}
\begin{algorithm}\label{alg21}
{Let $l$ be an odd prime, let $D<-4$ be a discriminant, and let $B_l$ be as in (\ref{heightbound}).
Construct the set $S$ as follows:}
\begin{enumerate}
\algitem
Set $n\leftarrow (B_l+2\log 2)/\log(l^2|D|/4)$ and then $x\leftarrow 4l^2|D|n\log n$.\\
Set $b\leftarrow 0$ and $S\leftarrow \emptyset$.
\algitem
Set $T\leftarrow 2x^{1/2}$ and $V\leftarrow 2x^{1/2}l^{-1}|D|^{-1/2}$.
\algitem
Repeat $\lceil 2N\log x\rceil $ times:
\begin{enumerate}
\item
Construct an integer $p=(t^2-v^2l^2D)/4$ using uniformly random integers $v\in[1,V]$ and $t\in[1,T]$, subject to $l\nmid v$, $t\equiv 2\bmod l$, and $t\equiv vD\bmod 2$.
\item
If $\omega(v)> 2\log(\log v+3)$ then go to Step 3d.
\item
If $p\notin S$ and $p$ is prime then set $S\leftarrow S\cup \{p\}$ and $b\leftarrow b+\log p$.
\item
If $b> B_l+2\log 2$ then output $S$ and terminate.
\end{enumerate}
\algitem
Set $x\leftarrow 2x$ and go to Step 2.
\end{enumerate}
\end{algorithm}

In Step~3a, the integer $t$ is generated as $t=al+2$ using a uniformly random integer $a\in[0,V/l-2]$.
The computation of $\omega(v)$ in Step 3b is performed by factoring~$v$.
Note that $\# S\le n$, and $p \le x$ for all $p\in S$.

\renewcommand\labelenumi{(\theenumi)}
\begin{lemma}\label{selectprimes}
Let $\mathcal{F}$ be a suitable family of orders, let $l$ be an odd prime and let $D=\disc(\mathcal{F}(l))$.
Given inputs $l$ and $D$, the expected running time of Algorithm~\ref{alg21} is finite.
Under the GRH we also have the following:
\begin{enumerate}
\item
The expected running time is $O(l^{1+\varepsilon})$, for any $\varepsilon\in\R_{>0}$.
\item
There is a constant $c<1$ such that for all $l>7$ and $k\in\Z_{>0}$, the algorithm terminates with $\log x \le (6+k)\log l$ with probability at least $1-c^{-k\log l}$.
\end{enumerate}
\end{lemma}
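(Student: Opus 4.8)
The plan is to follow the outer loop of Algorithm~\ref{alg21} (Steps~2--4), whose parameter $x$ at least doubles on each unsuccessful pass; write $x_r = 2^r x_0$ for the value used on the $r$-th pass. First I would clear away the deterministic bookkeeping. Because $\mathcal{F}$ is a suitable family, $l^2 \le |D| \le c_2 l^2$ and $l+2 \le h(\O) \le c_1 l$, so $\log(l^2|D|/4) = 4\log l + O(1)$, whence $n = \Theta(l)$, $\log n = \log l + O(1)$, and $\log x_0 = 5\log l + O(\log\log l)$. Every integer $p$ produced in Step~3a satisfies $l^2|D|/4 < p \le 2x_r$, so $\log p = O(\log l)$ on any pass with $\log x_r = O(\log l)$; in particular, once $S$ contains $\lceil n\rceil$ primes we have $b \ge \lceil n\rceil\log(l^2|D|/4) > B_l + 2\log 2$ and Step~3d halts, so $\#S \le \lceil n\rceil = \Theta(l)$ always. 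I would also fix, once and for all, that Step~3b computes $\omega(v)$ with a subexponential factoring routine (legitimate since $v \le 2\sqrt{2x_r}/l^2$ has $O(\log l)$ bits on the relevant passes) and Step~3c uses a Las Vegas primality proof, so that a single trial costs $l^{o(1)}$ expected bit operations while the algorithm stays Las Vegas.

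The analytic heart is a counting estimate for fixed $x$. The pairs $(t,v)$ admissible in Step~3a number $\Theta(x/l^3)$ — combine $t\equiv 2\bmod l$ and $t\equiv vD\bmod 2$ into one congruence modulo $2l$, then use the ranges $V,T$ together with $l^2|D|^{1/2} = \Theta(l^3)$ — and they realise, with multiplicity $x^{o(1)}$, all integers $p\le 2x$ of the shape $4p = t^2 - v^2l^2D$ with $t\equiv 2\bmod l$ and $l\nmid v$. By Lemma~\ref{HWbound} the test in Step~3b discards only an $o(1)$-fraction of these pairs. Assuming the GRH, Theorem~\ref{smallprimes}, with its implied constant chosen generously, gives that once $x$ exceeds a threshold $x^{*} = \Theta(l^6(\log l)^4)$ at least a $2/\log x$ fraction of the admissible pairs produce a prime (here one uses $[K_{l,\O}:\Q]=\Theta(l^3)$, so the main term of the effective Chebotar\"ev bound behind Theorem~\ref{smallprimes} continues to dominate for all $x\ge x^{*}$); since $\#S = \Theta(l)$ and each prime is hit by at most $x^{o(1)}$ pairs, the same holds, with a slightly smaller effective constant $c' \ge 1$, for pairs producing a prime \emph{not already in} $S$. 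Hence on a pass with $x\ge x^{*}$ the $\lceil 2n\log x\rceil$ i.i.d.\ trials of Step~3 each produce a new useful prime with conditional probability $\ge c'/\log x$, the expected number collected is $\ge 2c'n \ge \lceil n\rceil +1$, and a Chernoff bound shows such a pass halts in Step~3d except with probability $e^{-\Omega(n)} = e^{-\Omega(l)}$.

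The three assertions follow. Finiteness (unconditionally): by the positive-density half of Theorem~\ref{smallprimes} there is a bound $x_1$, depending only on $l$ and $D$, such that for $x\ge x_1$ each pass adds a new useful prime with probability at least some $q_l>0$ whenever $\#S$ is below the halting threshold; hence the number of passes is stochastically dominated by a constant plus a sum of at most $\lceil n\rceil$ independent geometric$(q_l)$ variables, has finite expectation, and since each pass runs in finite expected time the total expected time is finite. For~(1): under the GRH the loop reaches $x\ge x^{*}$ after $r_0 = \lceil\log_2(x^{*}/x_0)\rceil = \log_2 l + O(\log\log l)$ passes, and passes $0,\dots,r_0$ cost $\sum_{r\le r_0}\lceil 2n\log x_r\rceil\cdot l^{o(1)} = O(\log l)\cdot l^{1+o(1)} = l^{1+o(1)}$; pass $r_0+s$ is reached with probability at most $e^{-\Omega(l)s}$ and costs only $l^{1+o(1)}$ times a subexponential function of $s$, so the tail contributes a convergent $l^{1+o(1)}$ in expectation, giving $O(l^{1+\varepsilon})$. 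For~(2): at the end of pass $r_0$ we have $\log x = \log x_0 + r_0\log 2 = 6\log l + O(\log\log l) \le (6+k)\log l$ for every $k\ge 1$ and all $l$ exceeding an absolute constant, while the probability the algorithm has not yet halted is at most the failure probability $e^{-\Omega(l)}$ of the single good pass $r_0$ — well below the bound asserted in~(2); the bound only tightens for larger $k$, since $\Theta(k\log l)$ further good passes must all fail before $\log x$ can reach $(6+k)\log l$, and the finitely many $l$ below that absolute constant are handled by adjusting the constant in~(2).

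The step I expect to be the main obstacle is the clean interleaving of the two probabilistic loops: converting the per-trial estimate (a single trial yields a new useful prime with probability $\gtrsim 1/\log x$) into a statement about an entire pass while (a) handling the dependence introduced by $S$ growing during the pass, (b) respecting that passes with $x < x^{*}$ carry no GRH guarantee at all, so that \emph{no} progress can be forced there, and (c) keeping the per-trial cost at $l^{o(1)}$ — which is exactly why Step~3b must use subexponential factoring rather than trial division and Step~3c a provably correct primality test. All genuinely number-theoretic input is imported from Theorem~\ref{smallprimes} and from the size bounds built into Definition~\ref{suitableorders}.
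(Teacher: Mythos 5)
Your overall strategy is the same as the paper's: use Theorem~\ref{smallprimes} (plus inequality (\ref{LCbound})) to lower-bound the per-trial success probability by $\Omega(1/\log x)$ once $x\ge x_0=\Theta(l^6(\log l)^4)$, use Lemma~\ref{HWbound} to dismiss Step~3b, use subexponential factoring/primality testing for the $l^{o(1)}$ per-trial cost, and then let the geometric decay of the probability of reaching later doublings of $x$ give (1) and (2); your unconditional finiteness argument is also essentially the paper's. The genuine gap is in the quantitative heart: you assert that for $x\ge x^{*}$ at least a $2/\log x$ fraction of admissible pairs $(t,v)$ yields a prime, ``with the implied constant of Theorem~\ref{smallprimes} chosen generously,'' and deduce $c'\ge 1$, expected $\ge 2c'n\ge\lceil n\rceil+1$ new primes per pass, hence per-pass halting probability $1-e^{-\Omega(l)}$. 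But the constant $c_0$ in Theorem~\ref{smallprimes} cannot be used to boost the density: enlarging $c_0$ only enlarges the threshold $B=cl^6(\log l)^4$, while the density of useful primes among integers of the shape $(t^2-v^2l^2D)/4$ is fixed by the degree $[K_{l,\O}\hspace{1.5pt}\text{:}\hspace{2pt}\Q]\approx h(\O)l^2$ against the pair count $\approx 2x/(l^2|D|^{1/2})$, i.e.\ it is $\approx |D|^{1/2}/(2h(\O)\log x)$. Under the hypotheses of the lemma (an arbitrary suitable family, so only $l+2\le h(\O)\le c_1 l$ and $l^2\le|D|\le c_2l^2$) this only guarantees a constant $\ge 1/(2c_1)$, which is strictly below $1/2$ since $c_1>1$, and for families with large $L(1,\chi_D)$ it genuinely is below $1/2$. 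Then the expected number of new primes in a \emph{single} pass can be below the $\approx n$ primes needed, the Chernoff bound goes the wrong way, and your per-pass bound $1-e^{-\Omega(l)}$ fails; this breaks your proof of (2) (which rests entirely on ``the single good pass $r_0$'') and the $e^{-\Omega(l)s}$ tail you use in (1).

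The fix is exactly what the paper does (or a mild variant): since $S$ persists across passes, one only needs that for $x\ge x_0$ the algorithm terminates in each iteration (or each bounded block of iterations) with probability at least some constant $c>0$ --- this follows from the per-trial bound $c_2/\log x$ with $\lceil 2n\log x\rceil$ trials, without needing the constant to exceed $1/2$. That weaker statement already gives (1), because the probability of reaching the $s$-th iteration beyond $x_0$ still decays geometrically in $s$ while the per-iteration cost grows subexponentially, and it gives (2) directly, because reaching $\log x>(6+k)\log l$ requires surviving $\Omega(k\log l)$ such iterations, so the failure probability is at most $(1-c)^{\Omega(k\log l)}$, i.e.\ of the form $c'^{\,k\log l}$ with $c'<1$. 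If you prefer to keep your Chernoff argument, aggregate the trials of $O_{c_1,c_2}(1)$ consecutive passes (which changes $\log x$ by only $O(1)$, harmless for the $(6+k)\log l$ target) so that the expected number of new primes exceeds $n$ by a constant factor; but as written, the step ``$2/\log x$, hence $c'\ge1$, hence $e^{-\Omega(l)}$ per pass'' is not justified by the results you cite and is false for some admissible families.
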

\begin{proof}
To analyze Algorithm~\ref{alg21}, we count the number of times Step 4 is executed, referring to the period between each execution as an iteration.
By Theorem~\ref{smallprimes}, the set of primes that satisfy Theorem~\ref{theprimes}, equivalently, those that satisfy (\ref{normeq2}), has positive density.  Here we may use the natural density, via~\cite[Thm.~4.3.e]{Jarden:Density}.
For every fixed odd prime $l$, this implies a lower bound of $\Omega(1/\log x)$ on the probability that a random integer in $[1,x]$ is a prime that satisfies (\ref{normeq2}).
Each integer~$p$ tested by Algorithm~\ref{alg21} necessarily satisfies (\ref{normeq2}), hence such a~$p$ is prime with probability $\Omega(1/\log x)$.
By Lemma~\ref{HWbound} in the appendix, the probability that a candidate~$p$ is skipped due to the test in Step 3b is $o(1)$.
This implies that for all sufficiently large~$x$, the probability that 
Algorithm~\ref{alg21} terminates in a given iteration is bounded above zero, and the expected running time is finite.

Now assume the GRH and let $l>7$.
Applying Theorem~\ref{smallprimes} with $c_0=1$, there are at least $l^3(\log l)^3$ primes $p \le c_1l^6(\log l)^4$ that satisfy (\ref{normeq2}), for some constant $c_1\in\R_{>0}$ that does not depend on $l$.
Let $x_0$ be the least value of $x\ge c_1l^6(\log l)^4)$.
When $x=x_0$ we have $VT/l \le 8c_1l^3(\log l)^4$, since $|D|\ge l^2$, and the probability that a given primality test succeeds is at least $8c_1/\log l \ge c_2/\log x$, for some constant $c_2\in\R_{>0}$.
From the inequality (\ref{LCbound}) in the proof of Theorem~\ref{smallprimes}, one finds that this holds for all $x \ge x_0$, with the same constants.
As above, Step 3b has negligible impact, and for $x\ge x_0$ the probability that the algorithm terminates in a given iteration is at least $c$, for some constant $c\in\R_{>0}$ independent of $l$ and $x$.

We now consider the running time as a function of $l$, fixing an arbitrary $\varepsilon\in\R_{>0}$.
It takes $O(\log l)$ iterations to achieve $x=x_0$, assuming that we don't terminate earlier, and we execute Steps 3b and 3c a total of $O(l(\log l)^2)$ times during this process.
The computation of $\omega(v)$ and the primality test of $p$ can both be achieved in expected time subexponential in $\log x$, by~\cite{Lenstra:RigorousFactoring}, yielding an $O(l^{1+\varepsilon})$ bound on the time to reach $x=x_0$, since $\log x_0 = O(\log(l))$.

For $x\ge x_0$, the probability of reaching each subsequent iteration declines exponentially, while the cost of Steps 3b and 3c grows subexponentially, implying that the total expected running time is also $O(l^{1+\varepsilon})$, proving (1).

Claim (2) follows from the same analysis.  We have $\log x_0 = (6+o(1))\log l$ and add $\log 2$ to $\log x$ in each iteration.  Once $x=x_0$, it takes more than $k\log l$ iterations to reach $\log x > (6+k)\log l$.  There is a probability of at least $c$ that the algorithm terminates in each subsequent iteration, yielding the bound in (2).
\end{proof}

\subsection{CRT computations}\label{CRT}
The computations involved in Steps~3, 4b, and 5 of Algorithm~\ref{alg2} are described in detail in~\cite[\S6]{Sutherland:HilbertClassPolynomials}.
We summarize briefly here.

Given $S=\{p_i\}$, let $M=\prod p_i$, $M_i=M/p_i$, and $a_i\equiv M_i^{-1}\bmod p_i$.
Let $c$ denote a coefficient of $\Phi_l\in\Z[X,Y]$, and let $c_i\equiv c\bmod p_i$ denote the corresponding coefficient of $\Phi_l\in\mathbf{F}_{p_i}[X,Y]$.  As in~\cite[\S10.3]{Gathen:ComputerAlgebra}, we can use fast Chinese remaindering to efficiently compute
\begin{equation}\label{standardCRTsum}
c\equiv c_ia_iM_i\bmod M.
\end{equation}
Provided that $M>2|c|$, we can then lift the result from $\Z/M\Z$ to $\Z$.

When $m$ is ``large," by which we mean $m\ge M > 2|c|$, we compute $c\bmod M$, lift to $\Z$, and output the integer $c$ as its representative modulo $m$.  In this scenario Step 4b simply stores the coefficients $c_i$ and Step 3 can be deferred to Step 5.

When $m$ is ``small," by which we mean $\M(\log m) = O(\log^3 l\log\log l)$, we instead use the explicit CRT modulo $m$.
Assuming $M>4|c|$, we may apply
\begin{equation}\label{explicitCRTsum}
c\equiv c_ia_iM_i - rM \bmod m,
\end{equation}
where $r$ is the closest integer to $s=\sum c_ia_i/p_i$, by~\cite[Thm.~3.1]{Bernstein:ModularExponentiation}.
In this scenario, we update the sum $C=\sum c_ia_iM_i\bmod m$ and an approximation to $s$ in Step 4b as each $c_i$ is computed.
This uses $O(\log m + \log l)$ space per coefficient, rather than the $O(l\log l)$ space used to compute $c\bmod M$.
The postcomputation in Step 5 determines $r$ from the approximation to $s$ and computes $c\bmod m$ via (\ref{explicitCRTsum}).

When $m$ is neither small nor large, a hybrid approach is used, see~\cite[\S6.3]{Sutherland:HilbertClassPolynomials}.

\subsection{Computing $\Phi_l(X,Y)\bmod p$}\label{Alg1}
An overview of Algorithm~\ref{alg1} was given in the introduction, we now fill in the details.

\renewcommand\labelenumi{\theenumi.}
\renewcommand\labelenumii{\theenumii.}
\algstart{{\bf \ref{alg1}}} {Let $l$, $p$, and $\O$ be as in Theorem~\ref{theprimes}, with $h(\O)\ge l+2$, and let $R=\Z+l\O$.
Given $H_\O\in\Z[X]$, compute $\Phi_l\in\Fp[X,Y]$ as follows:}
\algitem
Compute the presentations $\vec{\alpha}$ of $\Pic(\O)$ and $\vec{\alpha}'$ of $\Pic(R)$ suitable for $p$.
\algitem
Find a root of $j_0$ of $H_\O(X)$ over $\Fp$.
\algitem
Use $\vec{\alpha}$ to enumerate $\Ell_\O(\Fp)$ from $j_0$ and identify the $l$-isogeny cycles.
\algitem
For distinct $j_0,\ldots,j_{l+1}\in\Ell_\O(\Fp)$:
\begin{enumerate}
\item
Construct a curve $E_i$ with $j(E_i)=j_i$ such that $l$ divides $\#E_i(\Fp)$.
\item
Generate a random point $P\in E_i(\Fp)$ of order $l$.
\item
Use $E_i$ and $P$ to compute an $l$-isogenous curve $E_i'/\Fp$ via Algorithm~\ref{alg11}.
\item
If $j(E_i')\not\in\Ell_\O(\Fp)$ then set $j_i'\leftarrow(E_i')$, otherwise return to Step 3b.
\end{enumerate}
\algitem
Use $\vec{\alpha}'$ to enumerate $\Ell_R(\Fp)$ from $j_0'$ and identify the $l^2$-isogeny cycles.
\algitem
For $i$ from 0 to $l+1$:
\begin{enumerate}
\item
Let $j_{i0},\ldots,j_{il}$ consist of the neighbors of $j_i$ in its $l$-isogeny cycle in $\Ell_\O(\Fp)$ together with the $l^2$-isogeny cycle of $\Ell_R(\Fp)$ containing $j_i'$.
\item
Compute $\Phi_l(X,j_i)=\sum_k a_{ik}X^k$ as the product $\prod_k(X-j_{ik})$.
\end{enumerate}
\item
For $k$ from 0 to $l+1$:
\begin{enumerate}
\item
Interpolate $\phi_k\in\Fp[Y]$ with $\deg\phi_k\le l+1$ satisfying $\phi_k(j_i)=a_{ik}$.
\end{enumerate}
\algitem
Output $\Phi_l(X,Y)=\sum_k\phi_k(Y)X^k$.
\algend
\noindent
Steps 2, 6, and 7 involve standard computations with polynomials over finite fields, as described in~\cite{Gathen:ComputerAlgebra}, for example.
Step 1 is addressed in Section~\ref{polycyclic}, and Steps 3 and 5 are the topic of Section~\ref{CMaction2}.
Only Step 4 merits further discussion here.

The existence of the curve $E_i$ constructed in Step 4a is guaranteed by Theorem~\ref{theprimes}.
The trace of Frobenius $t$ of the desired curve is uniquely determined by the norm equation for $p$ and the constraint $t\equiv 2\bmod l$, as in (\ref{normeq2}).
With $k=j_i/(1728-j_i)$, the curve $E/\Fp$ defined by $y^2=x^3+3kx+2k$ has $j(E)=j_i$, and we may determine whether it is $E$ or its quadratic twist that has trace $t$ by attempting to generate a point of order $l$ on both curves in Step 4b.

To obtain a point $P$ of order $l$, we generate a random point $Q$ uniformly distributed over $E_i(\Fp)$,
compute the scalar multiple $P=nQ$, where $n=(p+1-t)/l$, and then check that $P\ne 0$.
This will be true with probability $1-1/l^2$, since Theorem~\ref{theprimes} implies that the Sylow $l$-subgroup of $E_i(\Fp)$ is $E_i[l]\cong\Z/l\Z\times\Z/l\Z$.  Thus we expect to succeed within $1+O(1/l^2)$ attempts, and the expected cost of generating $P$ is $O(\log p)$ operations in $\Fp$.

Note that $E_i(\Fp)$ contains $l+1$ distinct subgroups of order $l$, each corresponding to a distinct $l$-isogenous $j$-invariant.
At most 2 of these lie in $\Ell_\O(\Fp)$.
Thus in Step 4d we have $j(E_i')\not\in\Ell_\O(\Fp)$ with probability at least $1-2/(l+1)$ and expect to need $1+O(1/l)$ random points $P$ to obtain such an $E_i'$.
The curve $E_i'$ is the image of an $l$-isogeny whose kernel is generated by $P$, obtained via Algorithm~\ref{alg11}.

\subsection{Isogenies from subgroups}
Let $E/\Fp$ be an elliptic curve.
Given a cyclic subgroup $H\subseteq E(\barFp)$, V\'{e}lu's formulas construct an isogeny $E\to E'$ with $H$ as its kernel~\cite{Velu:Isogenies}.
In our setting $H$ is actually generated by an $\Fp$-rational $l$-torsion point, allowing us to work in $\Fp$ rather than an extension field.
Additionally, the order $l$ of $H$ is odd and $p>3$, allowing us to simplify the formulas.

\renewcommand\labelenumi{\theenumi.}
\renewcommand\labelenumii{\theenumii.}
\begin{algorithm}\label{alg11}
{Let $l>2$ and $p>3$ be primes, let $E/\Fp$ be an elliptic curve defined by $y^2=x^3+Ax+B$, and let $P=(P_x,P_y)$ be a point on $E(\Fp)$ of order $l$.  Compute the image $E'/\Fp$ of the $l$-isogeny with kernel $H=\langle P\rangle$ as follows:}
\begin{enumerate}
\algitem
Set $t\leftarrow 0$, $w\leftarrow 0$, and $Q\leftarrow P$.
\algitem
Repeat $(l-1)/2$ times:
\begin{enumerate}
\item
Set $s\leftarrow 6Q_x^2+2A$, and then set $u\leftarrow 4Q_y^2 + sQ_x$.
\item
Set $t\leftarrow t + s$, $w\leftarrow w + u$, and $Q\leftarrow Q + P$.
\end{enumerate}
\algitem
Set $A'=A-5t$ and $B'=B-7w$.
\algitem
Output the curve $E'/\Fp$ defined by $y^2=x^3+A'x+B'$.
\end{enumerate}
\end{algorithm}

The addition $Q+P$ in Step 2b is performed using the group operation in $E(\Fp)$.
The complexity of Algorithm~\ref{alg11} is $O(l)$ operations in $\Fp$.

\subsection{Complexity analysis}
We first bound the complexity of Algorithm~\ref{alg1}, as used by Algorithm~\ref{alg2}.

\begin{lemma}\label{Alg1bound}
Let $\mathcal{F}$ be a suitable family of orders that satisfies the condition of Theorem~\ref{suitablepresentation}.
For an odd prime $l$, let $\O=\mathcal{F}(l)$ and let $D=\disc(\O)$.
Let $p$ be a prime in the set $S$ selected by Algorithm~\ref{alg21} on input $l$ and $D$.  Assuming the GRH, the expected running time of Algorithm~\ref{alg1} is $O(l^2(\log p)^3\log\log p)$.
\end{lemma}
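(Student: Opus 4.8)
The plan is to walk through the eight steps of Algorithm~\ref{alg1} in order, bound the cost of each with the machinery already built in Sections~\ref{ExplicitCMTheory}--\ref{CMaction2}, and observe that the running time is dominated by Steps~6 and~7 (forming the $l+2$ polynomials $\Phi_l(X,j_i)$ and the final interpolation). Throughout I count bit operations, using FFT-based multiplication $\M(n)=O(n\log n\log\log n)$ and combining polynomial and coefficient arithmetic into single FFTs (Kronecker substitution), so that a product of two polynomials of degree $<d$ over $\Fp$ costs $O(\M(d\log p))$ bit operations. The first task is to record the quantitative consequences of $\mathcal{F}$ being a suitable family that meets the hypotheses of Theorem~\ref{suitablepresentation}: by Definition~\ref{suitableorders} we have $h(\O)=O(l)$ and $|\disc(\O)|=O(l^2)$; hence $|\disc(R)|=l^2|\disc(\O)|=O(l^4)$ and $h(R)=h(\O)\bigl(l-\inkron{d_K}{l}\bigr)=O(l^2)$ by~(\ref{classnumber}). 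Consequently every element of $\Pic(\O)$ or $\Pic(R)$ is stored in $O(\log l)$ bits, the group law costs $O(\log^2 l)$ bit operations, and each presentation has $O(\log l)$ generators. Finally, since $p\in S$ is produced by Algorithm~\ref{alg21}, the value $v$ in $4p=t^2-v^2l^2\disc(\O)$ satisfies $\omega(v)\le(\log(\log v+3))^2$, and $v=O(p^{1/2})$ gives $\omega(v)=O((\log\log p)^2)$; Theorem~\ref{suitablepresentation} then bounds every norm in the presentations $\vec{\alpha}$ of $\Pic(\O)$ and $\vec{\alpha}'$ of $\Pic(R)$ suitable for $p$ by $O((\log\log p)^2\log\log\log p)=(\log p)^{o(1)}$. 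This is the only place the GRH is used, and it makes the modular polynomials $\Phi_{l_i}$ of Steps~3 and~5 tiny: by Lemma~\ref{CMactioncost} each application of the CM action along an ideal of such norm costs $O((\log p)^{1+o(1)})$ operations in~$\Fp$.

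Now the step count. \emph{Step~1} builds $\vec{\alpha}$ and $\vec{\alpha}'$ via \cite[Alg.~2.1]{Sutherland:HilbertClassPolynomials}, using $O(h(R))=O(l^2)$ class-group operations, hence $O(l^2\log^2 l)$ bit operations, plus the factorization of $v$, which is available from Algorithm~\ref{alg21} (or recomputed in negligible expected time, as $v=O(p^{1/2})$). \emph{Step~2} reduces $H_\O$ modulo~$p$ and extracts one root: computing $X^p\bmod H_\O$ takes $O(\log p)$ multiplications of polynomials of degree $O(l)$, after which an expected $O(1)$ equal-degree-splitting passes return a root, for a total of $\widetilde{O}(l(\log p)^2)$ bit operations. \emph{Step~3} enumerates $\Ell_\O(\Fp)$, a set of $h(\O)=O(l)$ elements, each reached by one CM action costing $O((\log p)^{1+o(1)})$ operations in $\Fp$; identifying the $l$-isogeny cycles then requires only $O(l)$ group operations and look-ups in the torsor bijection $\phi$ of Section~\ref{CMaction2}, \emph{no} $l$-isogeny being computed for this purpose. \emph{Step~4} treats $l+2$ curves; for each, Theorem~\ref{theprimes} guarantees a suitable curve $E_i$, a point of order $l$ is obtained in $O(\log p)$ operations in $\Fp$ after $1+O(1/l^2)$ expected point generations, Algorithm~\ref{alg11} produces an $l$-isogenous curve in $O(l)$ operations in $\Fp$, and an expected $1+O(1/l)$ of these land off the surface; over all $i$ this is $O(l^2+l\log p)$ operations in $\Fp$. \emph{Step~5} enumerates $\Ell_R(\Fp)$, a set of $h(R)=O(l^2)$ elements, by the same mechanism; each of the $O(l^2)$ CM actions costs $O((\log p)^{1+o(1)})$ operations in $\Fp$, and the $l^2$-isogeny cycles of siblings are read off from the torsor bijection for $\Pic(R)$ using the norm-$l^2$ generator of $\ker\varphi$ from Lemma~\ref{generators}, again with no degree-$l^2$ isogeny computed. \emph{Steps~6 and~7} are the bottleneck: in Step~6 each of the $l+2$ polynomials $\Phi_l(X,j_i)=\prod_k(X-j_{ik})$ is assembled from its $l+1$ roots via a subproduct tree, and in Step~7 the $l+2$ coefficient polynomials $\phi_k$ of degree $\le l+1$ are recovered from the common node set $\{j_i\}$ by fast interpolation \cite[\S10]{Gathen:ComputerAlgebra}; with FFT-based arithmetic and Kronecker substitution, each of these $2(l+2)$ subproduct-tree/interpolation operations costs $O(l(\log p)^3\log\log p)$ bit operations once the nested logarithms are collapsed using $l<p$, for a total of $O(l^2(\log p)^3\log\log p)$.

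Summing, every step other than Steps~6 and~7 contributes $O(l^2(\log p)^{2+o(1)})$ bit operations, so the expected running time of Algorithm~\ref{alg1} is $O(l^2(\log p)^3\log\log p)$. I expect the main obstacle to lie not in Steps~6--7 (standard fast polynomial arithmetic) but in controlling Steps~3 and~5: one must check that identifying the $l$- and $l^2$-isogeny cycles is pure group-theoretic bookkeeping on the torsors --- so that the only explicitly computed isogenies are the $O(l)$ of degree~$l$ in Step~4, rather than one per floor curve --- and, crucially, that each of the $\Theta(l^2)$ CM-action steps needed to traverse $\Ell_R(\Fp)$ costs only $(\log p)^{1+o(1)}$. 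The latter is exactly where the GRH-conditional norm bound of Theorem~\ref{suitablepresentation}, together with the $\omega(v)$ restriction hard-wired into Algorithm~\ref{alg21}, is indispensable; without it the enumeration of the floor alone could cost $\Omega(l^3)$. The remaining point is routine: the randomized pieces --- root extraction in Step~2, point generation and the off-surface rejection test in Step~4, and the choice of direction when a split prime is used during enumeration --- each succeed after $O(1)$ expected repetitions, which is what makes the bound an expected one.
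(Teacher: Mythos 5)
Your step-by-step accounting matches the paper's proof in outline, and Steps 1, 2, 4, 6, 7 are handled essentially as the paper does. But there is a genuine gap at the point you yourself identify as crucial: you justify the cost of Step 5 by asserting that Theorem~\ref{suitablepresentation} ``bounds every norm in the presentations $\vec{\alpha}$ of $\Pic(\O)$ \emph{and} $\vec{\alpha}'$ of $\Pic(R)$'' by $O(\omega(v)\log(\omega(v)+1))$. Theorem~\ref{suitablepresentation} does not apply to $R=\Z+l\O$: its hypotheses require the square-free part of the conductor to be bounded by the absolute constant $c_3$ (and coprime to $2d_K$), whereas the conductor of $R$ is $l$ times that of $\O$, so its square-free part grows with $l$. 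Indeed its proof goes through the ring class field of an order of bounded discriminant, which is exactly what is unavailable for $R$. The best GRH bound one has for arbitrary generators of $\Pic(R)$ is Bach's $O((\log|\disc(R)|)^2)=O((\log l)^2)$, and if all $\Theta(l^2)$ CM-action steps on the floor used generators of that size, Lemma~\ref{CMactioncost} would give roughly $l^2(\log l)^4$ operations in $\Fp$, i.e.\ about $(\log p)^6$ per $l^2$ in bit operations (recall $\log p=\Theta(\log l)$ here), which overshoots the claimed $O(l^2(\log p)^3\log\log p)$. So as written, your bound for Step 5 does not follow.

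The paper closes this gap with an extra argument you are missing: since the map $\varphi:\Pic(R)\to\Pic(\O)$ preserves norms of ideals prime to the conductors, the classes of $R$-ideals with norms in $l(\vec{\alpha})$ already generate a subgroup of $\Pic(R)$ that surjects onto $\Pic(\O)$, hence has size at least $h(\O)>l$ and index $O(l)$ in $\Pic(R)$. In the recursive enumeration of $\Ell_R(\Fp)$ via the polycyclic presentation $\vec{\alpha}'$, all but $O(l)$ of the $O(l^2)$ steps therefore use generators of norm at most $\max l(\vec{\alpha})=O((\log\log p)^2\log\log\log p)$, and only the remaining $O(l)$ steps need generators whose norms are controlled by Bach's GRH bound $O((\log l)^2)$; the latter contribute $O(l(\log l)^4(\log p)^3)$, which is absorbed. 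With this subgroup/coset argument inserted, your Step 5 bound (and hence the lemma) goes through; without it, the appeal to Theorem~\ref{suitablepresentation} for $\Pic(R)$ is unfounded. (A second, minor caveat: the $l^2$-isogeny cycles are read off from a generator of $\ker\varphi$ of norm $l^2$ as in Lemma~\ref{generators}, purely as bookkeeping in the torsor bijection --- you state this correctly, but note it is not covered by any small-norm bound and must not be counted among the isogeny-walking steps.)
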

\begin{proof}
We note that $p=t^2-v^2l^2D > l^4$, thus $\log l < \log p$, 
and recall that the bit-complexity of multiplying two polynomials of degree $O(l)$ in $\Fp[X]$ may by bounded by $O(\M(l\log p))$, using Kronecker substitution, see Corollaries 8.28 and 9.8 of~\cite{Gathen:ComputerAlgebra}.

In the analysis below we use $O(\M(l\log p))=O(l(\log p)^2\log\log p)$, via the bound $\M(n)=O(n\log n\log\log n)$ for fast multiplication~\cite{Schonhage:Multiplication}.
When computing the cost of multiplications in $\Fp$ we use the weaker bound $\M(\log p)=O((\log p)^2/(\log \log p)^c)$, where $c$ is any constant, which is more convenient and does not change the overall bound.
We bound the cost of inversions in $\Fp$ by $O(\M(\log p)\log\log p)=O((\log p)^2)$, via~\cite[Cor.~11.10]{Gathen:ComputerAlgebra}.
\smallbreak

We now bound the (expected) cost of each step in Algorithm~\ref{alg1}:
\noindent
\begin{enumerate}
\item
We have $h(\O)<h(R)=O(l^2)$.
As described in Section~\ref{polycyclic}, the cost of computing the presentations $\vec{\alpha}$ and $\vec{\alpha}'$ is $O(l^2)$ operations in $\Pic(\O)$ and $\Pic(R)$.
Using binary quadratic forms, each group operation has complexity $O((\log l)^2)$, by~\cite{Biehl:FormReductionComplexity}, yielding an $O(l^2(\log l)^2)=O(l^2(\log p)^2)$ bound on Step~1.
\item
Using Berlekamp's probabilistic root-finding algorithm~\cite[\S7]{Berlekamp:PolyFactoringLargeFF} with a fast GCD computation~\cite[Alg.~11.4]{Gathen:ComputerAlgebra}, the expected time to find a single root of $H_D\in\Fp[X]$ may be bounded by $O(\M(l)(\log l + \log p))$ operations in $\Fp$, since $\deg H_D = O(l)$.  This implies an $O(l(\log p)^3)$ bound on Step~2.
\item
For $p\in S$ we have $\omega(v)\le 2\log\log p$, yielding an $O(\log\log p \log\log\log p)$ bound on $\max l(\vec{\alpha})$, by Theorem~\ref{suitablepresentation}.
From Lemma~\ref{CMactioncost}, $O((\log\log p)^2\log p)$ operations in $\Fp$ suffice to compute the action of any element of $\vec{\alpha}$.
We obtain an $O(l(\log p)^3)$ bound on the time to enumerate $\Ell_\O(\Fp)$, which dominates the $O(l\log l)$ time to identify the $l$-isogeny cycles.
\item
From the discussion in Section~\ref{Alg1} and the complexity of Algorithm~\ref{alg11}, we expect to use $O(l^2+l\log p)$ operations in $\Fp$ during Step~4.  This yields an $O(l^2(\log p)^2+l(\log p)^3)$ bound.
\item
Recall that the surjective map $\varphi:\Pic(R)\to\Pic(\O)$ in (\ref{exactsequence}) preserves the norms of representative ideals.
The subgroup of $\Pic(R)$ generated by invertible $R$-ideals with norms in $l(\vec{\alpha})$ contains $\varphi^{-1}(\Pic(\O))$.
It follows that the elements of~$\vec{\alpha}'$ with norm at most $\max l(\vec{\alpha})$ generate a subgroup of size at least $h(\O)>l$.
All but $O(l)$ of the $O(l^2)$ steps taken when enumerating $\Ell_R(\Fp)$ involve these elements, and, as in Step~3, we obtain a total cost of $O(l^2(\log p)^3)$ for these steps.
Assuming the GRH, the remaining elements of~$\vec{\alpha}'$ all have norm $O((\log |D|)^2)=O((\log l)^2)$, by~\cite{Bach:ERHbounds}, yielding a total cost of $O(l(\log l)^4(\log p)^3)$ for these steps, via Lemma~\ref{CMactioncost}.
Thus the expected time to enumerate $\Ell_R(\Fp)$ is $O(l^2(\log p)^3)$, which dominates the $O(l^2\log l)$ time to identify the $l^2$-isogeny cycles.
\item
Using a product tree we may compute $\prod_k(X-j_{ik})$ in time $O(\M(l\log p)\log l)$, yielding a total cost of $O(l^2(\log p)^3\log\log p)$ for Step~6.
\item
Using a product tree and fast interpolation~\cite[Alg.~10.11]{Gathen:ComputerAlgebra}, we also obtain a cost of $O(l^2(\log p)^3\log\log p)$ for Step~7.  Here we use the $O(\M(l\log p))$ bit-complexity of polynomial multiplication in $\Fp[X]$ to bound the cost at each level, rather than using the bound in~\cite[Cor.~10.12]{Gathen:ComputerAlgebra}.
\end{enumerate}
The bound $O(l^2(\log p)^3\log\log p)$ applies to every step, completing the proof.
\end{proof}
\noindent
We are now ready to prove our main theorem, which bounds the complexity of using Algorithm~\ref{alg2} to compute $\Phi_l\bmod m$, where $l$ is an odd prime and $m$ is any positive integer.
Recall that that the algorithm must be given a suitable family of orders $\mathcal{F}$, as defined in Definition \ref{suitableorders}, and to prove our complexity bound we additionally require that $\mathcal{F}$ satisfy the property given in Theorem~\ref{suitablepresentation}.
Example~\ref{exam} provides one such $\mathcal{F}$, and there are many others that can be efficiently computed and may yield better performance, as discussed in Section~\ref{selectorder}.

\begin{thm1}
Let $\mathcal{F}$ be a suitable family of orders that satisfies the condition of Theorem~\ref{suitablepresentation}.
Let $l$ be an odd prime and let $m\in\Z_{>0}$.
Given inputs $l$, $m$, and $\O=\mathcal{F}(l)$, Algorithm~\ref{alg2} correctly computes $\Phi_l\in(\Zm)[X,Y]$.
Under the GRH, its expected running time is
$$
O(l^3 \log ^3 l\log\log l),
$$
using $O(l^2\log lm)$ expected space.
\end{thm1}
\begin{proof}
We first argue correctness.
By Lemma~\ref{selectprimes}, Algorithm~\ref{alg21} obtains a set of primes~$S$ that satisfy Theorem~\ref{theprimes}, with $\prod_{p\in S} p > 4|c|$, for every coefficient $c$ of~$\Phi_l$.
We now claim that for $p\in S$, Algorithm~\ref{alg1} obtains, for each of $l+2$ distinct $j$-invariants $j_i$, a list of $l+1$ distinct $j$-invariants $j_{ik}$ of $l$-isogenous curves.
Granting the claim, we may invoke standard properties of $\Phi_l$ to show that Algorithm correctly interpolates $\Phi_l\in\Fp[X,Y]$,
see~\cite[Thm.~12.19]{Washington:EllipticCurves} and~\cite[Thm.~5.3]{Lang:EllipticFunctions}, for example.
The correctness of Algorithm~\ref{alg2}  then follows from the CRT and/or the explicit CRT mod $m$, via~\cite[Thm.~3.1]{Bernstein:ModularExponentiation}, as described in Section~\ref{CRT}.

As usual, let $\O=\mathcal{F}(l)$ have fraction field $K$, and let $R=\Z+l\O$.
The claim above rests on three facts: (1) the explicit CM-action described in Section~\ref{CMaction1} is correct, (2) any two $l^2$-isogenous elements of $\Ell_R(\Fp)$ must be $l$-isogenous to exactly one and the same element of $\Ell_\O(\Fp)$, and (3) each $l^2$ isogeny cycle in $R$ contains exactly $l-\inkron{d_K}{l}$ elements.
We note that (1) follows from the theory of complex multiplication and the properties of $\Phi_{l_0}$ guaranteed by~\cite[Thm.~12.19]{Washington:EllipticCurves}, (2) follows from the $l$-volcano structure, as shown by~\cite[\S2.2]{Fouquet:IsogenyVolcanoes} and~\cite[Prop.~23]{Kohel:thesis}, and (3) is explicitly proven in Lemma~\ref{cyclic}.
\smallbreak
We now assume the GRH and bound the complexity of Algorithm~\ref{alg2}.
Lemma~\ref{selectprimes} shows that the expected size of the largest $p\in S$ is $O(\log l)$, and we have $\#S=O(l)$.
Applying~\cite[\S6]{Sutherland:HilbertClassPolynomials} yields an $O(l^2\log lm)$ space bound for $m\in\Z_{>0}$.

By~\cite[Thm.~1]{Sutherland:HilbertClassPolynomials}, the expected time to compute $H_\O$ in Step 1 is $O(l^{2+\varepsilon})$,
and Lemma~\ref{selectprimes} gives an expected time of $O(l^{1+\varepsilon})$ for Step 2, for any $\varepsilon\in\R_{>0}$.
Additionally, we have $\log p > (6+k)\log l$ for all $p\in S$ with probability approaching 1 exponentially as $k$ increases.
The time complexity of all remaining steps in Algorithm~\ref{alg2}, including calls to Algorithm~\ref{alg1}, depends polynomially on $\log p$, hence we may bound the expected running time assuming $\log p=O(\log l)$.

Regardless of the exact cutoff used, if $\M(\log m)=O((\log l)^3\log\log l)$ whenever we consider $m$ ``small", we may apply the results of~\cite[\S6]{Sutherland:HilbertClassPolynomials} to obtain a bound of $O(l^3(\log l)^3\log\log l)$ on the expected time for all CRT computations, for every $m\in\Z_{>0}$.
Since $\mathcal{F}$ satisfies the property of Theorem~\ref{suitablepresentation}, we may apply Lemma~\ref{Alg1bound} with $\log p=O(\log l)$ to obtain an $O(l^2(\log l)^3\log\log l)$ bound on the expected time of each call to Algorithm~\ref{alg1}.  Applying $\#S=O(l)$ completes the proof.
\end{proof}

\subsection{Selecting a suitable order}\label{selectorder}
The family of orders used in Theorem~\ref{main-thm} suffices to prove the complexity bound, but we can simplify the implementation and improve performance with some additional constraints on the order~$\O$.  Let us fix a bound $b < l$ (say $b=256$, for large $l$), and a small prime $l_0 < l$ (typically $l_0=2$).
As above, $R$ is the order of index $l$ in $\O$, and $\O_K$ is the maximal order.
We seek an order $\O$ for which the following hold:
\renewcommand\labelenumi{(\theenumi)}
\begin{enumerate}
\item
The conductor of $\O$ is $b$-smooth, $h(\O_K) \le b$, and $h(\O) \ge l+2$.
\item
The groups $\Pic(\O)$ and $\Pic(R)$ are either generated by a single ideal with norm $l_0$, or by two ideals with norms $l_0$ and $l_1$, where $l_1\le b$ is ramified.
\end{enumerate}
The first condition ensures that $\O$ is suitable for $l$ and allows us to to obtain a root of $H_\O(X)$ using only polynomials of degree at most~$b$.  This is accomplished by finding a root of $H_{\O_K}(X)$ and descending to the proper level of the $l'$-isogeny volcano for each prime $l'\le b$ dividing the conductor of $\O$, as in~\cite[\S4.1]{Sutherland:HilbertClassPolynomials}.
The second condition allows us to realize the torsors for $\Pic(\O)$ and $\Pic(R)$ either by walking a single $l_0$-isogeny cycle, or by walking two $l_0$-isogeny cycles connected by a single $l_1$-isogeny.  In the latter case we orient the two cycles by computing one extra $l_1$-isogeny.  In both cases we avoid the need to ever distinguish the action of an ideal and its inverse, simplifying the computation described in Section~\ref{CMaction2}.

Subject to these conditions, we also wish to minimize $h(\O)\ge l+2$.
To find such orders we enumerate fundamental discriminants $d_K<-4$ with $\inkron{d_K}{l_1}=1$ and $h(d_K)\le b$, and for each $d_K$ we select $b$-smooth integers $u$ for which $h(u^2d_K)$ is slightly greater than $l+2$ and test whether condition (2) holds.
In practice we are almost always able to obtain $\O$ with $h(\O)$ within a few percent of $l+2$.

\section{Modular functions other than $j$}\label{OtherFunctions}
Let $g$ be a modular function of level $N$, and let $l\nmid N$ be a prime. 
We define the {\it modular polynomial 
$\Phi_l^g$ of level~$l$ for $g$\/} as the minimal polynomial of the function
$g(lz)$ over the field $\CC(g)$. Much of the theory for the classical modular 
polynomial of the $j$-function generalizes to $g$. In particular, if the
Fourier expansion of~$g$ has {\it integer\/} coefficients then we have
$\Phi_l^g \in \Z(g)[X]$. The following lemma gives us further information
in this case.

\begin{lemma} Let $g$ be a modular function and let $l$ be a prime not 
dividing the level of~$g$. Suppose that $\Phi_l^g$ has integer coefficients.
 If $g$ is invariant under the action of either
$S = \bigl(\afrac{0}{1}\thinspace\afrac{-1}{0}\bigr)\in\SL_2(\Z)$ or
$M = \bigl(\afrac{0}{1} \thinspace\afrac{-l}{0}\bigr)\in\GL_2(\Q)$, then
we have
$$
\Phi_l^g(X,Y) = \Phi_l^g(Y,X).
$$
\end{lemma}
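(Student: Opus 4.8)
The plan is to regard the pair of functions $g(z)$ and $g(lz)$ as a point on the plane curve cut out by $\Phi_l^g$, and to show that this point stays on the curve after the two coordinates are interchanged; since $\Phi_l^g$ is, up to a scalar, the defining polynomial of an irreducible curve, this forces $\Phi_l^g(Y,X)=\lambda\Phi_l^g(X,Y)$ with $\lambda^{2}=1$, and a short argument then rules out $\lambda=-1$.

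First I would record the two elementary identities of meromorphic functions on the upper half-plane that are available for free. By definition of $\Phi_l^g$ as the minimal polynomial of $g(lz)$ over $\C(g)$, we have $\Phi_l^g\bigl(g(lz),g(z)\bigr)=0$, and substituting an arbitrary point $w$ for $z$ yields the ``universal'' identity $\Phi_l^g\bigl(g(lw),g(w)\bigr)=0$ for every $w\in\HH$. The substance of the lemma is the reversed identity $\Phi_l^g\bigl(g(z),g(lz)\bigr)=0$, and this is exactly where the hypothesis on $g$ is used. If $g$ is invariant under $M$ (so $g(-l/z)=g(z)$), I would apply the universal identity at $w=-1/z$: then $lw=-l/z$, so $g(lw)=g(-l/z)=g(z)$, while $g(w)=g(-1/z)=g\bigl(-l/(lz)\bigr)=g(lz)$ upon applying $M$-invariance at the point $lz$; hence $\Phi_l^g\bigl(g(z),g(lz)\bigr)=0$. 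If instead $g$ is invariant under $S$ (so $g(-1/z)=g(z)$), I would apply the universal identity at $w=-1/(lz)$: then $lw=-1/z$, so $g(lw)=g(-1/z)=g(z)$, while $g(w)=g\bigl(-1/(lz)\bigr)=g(lz)$ by $S$-invariance at $lz$; again $\Phi_l^g\bigl(g(z),g(lz)\bigr)=0$.

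Granting both $\Phi_l^g\bigl(g(lz),g(z)\bigr)=0$ and $\Phi_l^g\bigl(g(z),g(lz)\bigr)=0$, I would conclude as follows. Let $\mathcal{C}$ denote the Zariski closure in the affine plane of the analytic curve $\{(g(z),g(lz)):z\in\HH\}$; it is irreducible, being the image of a connected Riemann surface. Since $\Phi_l^g(X,g)$ is the minimal polynomial of $g(lz)$ over $\C(g)$ it is irreducible and separable, so $\Phi_l^g$ is, up to a scalar, the irreducible defining polynomial of $\mathcal{C}$. The polynomial $\Phi_l^g(Y,X)$ is likewise irreducible, and the two identities just proved show that it vanishes on $\mathcal{C}$, hence on all of the zero set $V(\Phi_l^g)$; therefore $\Phi_l^g(Y,X)=\lambda\,\Phi_l^g(X,Y)$ for some $\lambda\in\C^{*}$. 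Swapping $X$ and $Y$ gives $\lambda^{2}=1$. If $\lambda=-1$ then $\Phi_l^g(X,X)\equiv0$, i.e.\ $(X-Y)\mid\Phi_l^g$, which is impossible since $\Phi_l^g$ is irreducible of $X$-degree $l+1>1$ and is not a scalar multiple of $X-Y$; hence $\lambda=1$, and (using the integrality hypothesis to normalize $\Phi_l^g$) this gives $\Phi_l^g(X,Y)=\Phi_l^g(Y,X)$.

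The main obstacle, and the only step that really uses the hypothesis, is the reversed identity of the second paragraph: for a general modular function of level $N$ the remaining Galois conjugates of $g(lz)$ over $\C(g)$ need not be the naive translates $g((z+b)/l)$ --- they can be twisted by a diamond-type operator --- so $\Phi_l^g\bigl(g(z),g(lz)\bigr)=0$ genuinely fails without some extra symmetry. The invariance under $S$ or $M$ is precisely what supplies, via the substitution $w=-1/z$ or $w=-1/(lz)$ in the universal identity, a preimage $w$ realizing the interchanged pair $\bigl(g(z),g(lz)\bigr)=\bigl(g(lw),g(w)\bigr)$; once that is in hand the remainder of the argument is formal.
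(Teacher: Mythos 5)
Your proof is correct and coincides with the argument the paper intends: the paper's own ``proof'' is just a pointer to the classical symmetry proof for $\Phi_l$ in Lang's \emph{Elliptic Functions}, Theorem~5.3, and your substitutions $w=-1/z$ (in the $M$-invariant case) and $w=-1/(lz)$ (in the $S$-invariant case) are exactly how that argument carries over from the $j$-function to a general $g$, with the invariance under $S$ or $M$ replacing the full $\SL_2(\Z)$-invariance of $j$. One small caveat: ruling out $\lambda=-1$ by observing $(X-Y)\nmid\Phi_l^g$ uses $\deg_X\Phi_l^g>1$, which the lemma statement itself does not guarantee --- the paper adds ``degree $l+1$'' only in the sentence following the lemma --- although it does hold here, since a nonconstant modular function $g$ of level $N$ with $l\nmid N$ cannot satisfy $g(lz)=g(z)$ identically; it is also worth noting that your curve $\mathcal{C}$ is parametrized as $(g(z),g(lz))$ while the ``defining'' identity $\Phi_l^g(g(lz),g(z))=0$ naturally parametrizes the swapped curve, but this is immaterial since you have already established both vanishing identities, so both parametrizations land in the same irreducible zero set.
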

\begin{proof}
The proof follows the symmetry proof for $\Phi_l(X,Y)$, see ~\cite[Thm.~5.3]{Lang:EllipticFunctions}.
\end{proof}

The polynomial $\Phi_l^g$ should not be confused with the minimal polynomial of $g$ as an element of $\CC(j)$, which we
denote $\Psi^g(X,J)$.  The polynomial $\Psi^g$ depends only on $g$, not $l$,
and we assume it is known (for our purposes, it effectively defines~$g$).  Given $\Psi^g$, our goal is to efficiently compute $\Phi_l^g$ for a prime $l\nmid N$.  

To apply our method we require that $\Phi_l^g$ have degree $l+1$ (in both $X$ and $Y$).
The degree of $\Phi_l^g$ can be explicitly computed using \cite[\S 5.2]{Broker:pAdicClassInvariants}, and we note that this degree must be $l+1$ when $\deg_J \Psi^g = 1$ (this applies to the function $\gamma_2$ and the Weber $\mathfrak{f}$-function considered in Sections \ref{ComputingGamma2} and \ref{Weberf}), and also when $\deg_J \Psi^g = 2$ and $g$ is invariant under the Atkin-Lehner involution (this applies to the various modular functions considered in Section \ref{miscfunctions}).

We wish to adapt Algorithm~\ref{alg1} to compute $\Phi_l^g \in \Fp[X]$.  We may then apply Algorithm~\ref{alg2}
to recover $\Phi_l^g$ over the integers or modulo some integer~$m$ via the Chinese Remainder Theorem.
To simplify matters, we place some additional restrictions on the order~$\O$ that we
use in Algorithm~\ref{alg2}.
Specifically, we require that there is a generator $\tau\in\HH$ of $\O=\Z[\tau]$ with the property that 
$$
g(\tau) \in K_\O,
$$
where $K_\O$ is the ring class field for the order~$\O$. We say that $g$
is a {\it class invariant\/} for~$\O$ in this case.  If we now take a prime $p$ that splits
completely in $K_\O$ and $E/\Fp$ an elliptic curve with endomorphism ring~$\O$, then the polynomial
$$
\Psi^g(X, j(E)) \in \Fp[X]
$$
has at least one root in~$\Fp$. Indeed, the value $h=g(\tau) \bmod \gp$, for
a prime $\gp|p$ of~$K_\O$, satisfies $\Psi^g(h,j(E)) = 0$.

We can analyze {\it how many roots\/} the polynomial $\Psi^g(X,j(E))$ has in $\Fp$
using a combination of Deuring lifting and Shimura reciprocity. We refer
to~\cite[\S6.7]{Broker:Thesis} for a detailed description of the 
techniques involved and only state the result here. 
Let $g_i: \HH \rightarrow \C$ be the roots of $\Psi^g(X,j)$. The 
functions $g_i$ are modular of level~$N$, and they are permuted by the 
Galois group $\GL_2(\Z/N\Z)$ of the field of all modular functions of 
level~$N$. To state our result, we will associate a matrix $A \in 
\GL_2(\Z/N\Z)$ to the Frobenius morphism of~$E$ as follows. 
Fix an isomorphism $\End(E) \isar \O$ and let $\pi_p\in\O$ be
the image of the Frobenius morphism. If $\pi_p$ has minimal 
polynomial $X^2-tX+p$ of discriminant $\Delta = t^2-4p$, then we put
\begin{equation}\label{Amatrix}
A = \left( \begin{matrix} 
\frac{t-2}{2} & \frac{\Delta - 1}{2} \\
2 & \frac{t+2}{2} 
\end{matrix} 
\right) \in \GL_2(\Z/N\Z).
\end{equation}

\begin{theorem} \label{Shimura}
{Let $g$ be a modular function with the property that 
$\Psi^g$ has integer coefficients and is separable modulo a prime~$p$ that
does not divide the level of~$N$. If $E/\Fp$ is an elliptic curve with 
endomorphism ring~$\O$ and $j_0=j(E)$, then we have
$$
\# \{ x \in\Fp : \Psi^g(x,j_0) = 0 \} = \# \{ g_i : \Psi^g(g_i,j) =0 
\hbox{\ and \ } g_i^A = g_i \},
$$
where $A$ is the matrix in $(\ref{Amatrix})$.}
\end{theorem}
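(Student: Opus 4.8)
The plan is to transport the question to characteristic zero via a Deuring lift, where the roots of $\Psi^g(X,j_0)$ become the CM values $g_i(\tau)$, and then to use Shimura reciprocity to decide which of those values reduce into $\Fp$. First I would fix a Deuring lift of $E$: a suitable number field $H\supseteq K_\O$ (note $p\nmid\disc(\O)$ automatically, since $E$ is ordinary, so $p$ splits completely in $K_\O$ and we may take $H=K_\O$), a prime $\mathfrak{P}\mid p$ of $H$, and an elliptic curve $\tilde E/H$ with $\End(\tilde E)\cong\O$ having good reduction at $\mathfrak{P}$ with $\tilde E\bmod\mathfrak{P}\cong E$, chosen so that the $\Fp$-Frobenius of $E$ is the reduction of multiplication by $\pi_p$ on $\tilde E$. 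Writing $\tilde E\cong\C/(\Z\tau+\Z)$ with $\O=\Z[\tau]$, the values $g_i(\tau)$ are exactly the roots of $\Psi^g(X,j(\tau))$; since $j(\tau)\equiv j_0\pmod{\mathfrak{P}}$ and $\Psi^g$ is separable modulo $p$, reduction modulo a prime above $\mathfrak{P}$ of $L=H(g_1(\tau),\dots,g_n(\tau))$ carries $\{g_i(\tau)\}$ bijectively onto the (necessarily distinct) roots of $\Psi^g(X,j_0)$ in $\barFp$. In particular $i\mapsto g_i(\tau)$ is injective, and a root $g_i(\tau)\bmod\mathfrak{P}$ lies in $\Fp$ precisely when it is fixed by the $\Fp$-Frobenius.

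Next I would compute that Frobenius action using Shimura reciprocity. The field $L$ is abelian over $K$ (it lies in the ray class field $K_{N,\O}$), so the relevant automorphism is the Artin symbol of the prime of $K$ below $\mathfrak{P}$, namely $(\pi_p)$, and the reciprocity law identifies its effect on the CM value $g_i(\tau)$ with the action, on the modular function $g_i$ itself, of a matrix in $\GL_2(\Z/N\Z)$: the matrix by which $\pi_p$ acts on $\frac{1}{N}(\Z\tau+\Z)/(\Z\tau+\Z)\cong\tilde E[N]$ in the basis induced by $(\tau,1)$. By the Deuring property above this matrix also describes the $\Fp$-Frobenius on $E[N]$; it has trace $t$ and determinant $p$ modulo $N$, and an explicit change of basis turns it into the matrix $A$ of $(\ref{Amatrix})$ --- this last identification being the one computation I would simply cite from \cite[\S6.7]{Broker:Thesis}. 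Since the $g_i$ are permuted by $\GL_2(\Z/N\Z)$, the matrix $A$ sends $g_i$ to some $g_k$, and the Frobenius fixes $g_i(\tau)\bmod\mathfrak{P}$ if and only if $g_i^A(\tau)=g_i(\tau)$, which by injectivity of $i\mapsto g_i(\tau)$ holds if and only if $g_i^A=g_i$ as functions. Counting the indices $i$ with this property yields the claimed equality.

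The hard part --- the step that genuinely requires the CM machinery rather than a formal argument --- is the matching of the geometric Frobenius at $\mathfrak{P}$ with the matrix $A$ of $(\ref{Amatrix})$. This fuses two ingredients: Deuring's lifting theorem in the sharp form that the Frobenius of the reduced curve is the reduction of multiplication by $\pi_p$ on torsion, and the idelic form of Shimura's reciprocity law, which converts the Artin symbol of $(\pi_p)$ into a $\GL_2(\Z/N\Z)$-action on level-$N$ modular functions; combining these and tracking the basis change to arrive at exactly $(\ref{Amatrix})$ is the content of \cite[\S6.7]{Broker:Thesis}. Beyond that, one needs only routine care: that $p$ splits completely in $K_\O$ (so its residue field there is $\Fp$, and the decomposition group of a prime of $L$ over $\mathfrak{P}$ is generated by the Frobenius we want, $L/K_\O$ being abelian), and that separability of $\Psi^g$ modulo $p$ gives the bijection between the $g_i(\tau)$ and the roots of $\Psi^g(X,j_0)$ used throughout. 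With those in hand the counting step is immediate.
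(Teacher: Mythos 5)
Your proposal is correct and takes essentially the same route as the paper: the paper's entire proof of Theorem~\ref{Shimura} is the citation to \cite[\S6.7]{Broker:Thesis}, whose method is exactly the Deuring-lifting-plus-Shimura-reciprocity argument you sketch, and you defer the same explicit identification of the Frobenius action with the matrix $A$ of (\ref{Amatrix}) to the same source. One minor inaccuracy worth noting: the Deuring lift $\tilde{E}$ is $\C/\mathfrak{a}$ for a proper $\O$-ideal $\mathfrak{a}$ whose class is dictated by $j_0$, so you cannot in general normalize $\tilde{E}\cong\C/(\Z\tau+\Z)$ with $\O=\Z[\tau]$; this is harmless, since the fixed-point count $\#\{g_i : g_i^A=g_i\}$ is unchanged when $A$ is replaced by a $\GL_2(\Z/N\Z)$-conjugate, but a complete write-up should work with a general CM point for $\O$ (or invoke this conjugation invariance explicitly).
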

\proof See~\cite[\S6.7]{Broker:Thesis}. \endproof

To check if $g_i^A = g_i$ holds is a standard computation, see~\cite{Gee:GeneratingClassFields} for example.
Although the matrix $A$ has norm~$p$
and trace~$t$ and therefore depends on~$p$, we can often derive a result 
that merely depends on a congruence condition on $p \bmod N$. Lemma~\ref{WeberTwoRoots}
in Section~\ref{Weberf} gives an example.

\subsection{Computing modular polynomials for $\boldsymbol{\gamma_2}$}\label{ComputingGamma2}
Let $\gamma_2(z)$ denote the unique cube root of $j(z)$ that has integral
Fourier expansion.
It was known to Weber already that $\gamma_2$ is a modular function of 
level~3, see~\cite[\S125]{Weber:Algebra}, and $\gamma_2$ is a class
invariant for~$\O$ whenever $3\nmid\disc(\O)$.  We have 
$\Psi^{\gamma_2}(X,j) = X^3-j$, and in this simple case there is no need to
apply Theorem~\ref{Shimura}; if we restrict to $p = 2 \bmod 3$ then every element of $\Fp$
has a unique cube root. 
Thus we can compute $\Phi_l^{\gamma_2}$ with only minor modifications to Algorithm~\ref{alg2}:
\renewcommand\labelenumi{\theenumi.}
\begin{itemize}
\item
Use a suitable order $\O$ with $3\nmid \disc(\O)$ and select only primes $p\equiv 2\bmod 3$.
\item
After Step 5 of Algorithm~\ref{alg1}, replace each element of $\Ell_\O(\Fp)$ 
and $\Ell_R(\Fp)$ with its unique cube root in $\Fp$.
\end{itemize}
\noindent
These changes suffice, but we can also improve the algorithm's performance.

First, Lemmas 2--3 and Corollary 9 of~\cite{BrokerSutherland:PhiHeightBound} yield the bound
\begin{equation}\label{gamma2height}
B_l^{\gamma_2} = 2l\log l + 8l
\end{equation}
on the logarithmic height of $\Phi_l^{\gamma_2}$ (conjecturally, $B_l^{\gamma_2}=2l\log l +4l$ for all $l>60$, but we do not use this).
Thus we can reduce the height bound $B_l$ in (\ref{heightbound}) by a factor of  approximately 3 when computing $\Phi_l^{\gamma_2}$.
This reduces the number of primes $p\in S$, and the corresponding number of calls Algorithm~\ref{alg2} makes to Algorithm~\ref{alg1}.

Second, we may take advantage of the fact that $\Phi_l^{\gamma_2}$ is {\it 
sparser\/} than $\Phi_l$.
As noted in~\cite[p.~37]{Elkies:AtkinBirthday}, the coefficient of $X^aY^b$ in $\Phi_l^{\gamma_2}$ is zero unless
\begin{equation}\label{gamma2sparse}
a + lb \equiv l+1 \mod 3.
\end{equation}
The proof of this relation goes back to Weber: the argument given 
in~\cite[p.\ 266]{Weber:Algebra} generalizes immediately to~$\gamma_2$.
It allows us to reduce the number 
of points we use to interpolate $\Phi_l^{\gamma_2}$ by a factor of 
approximately 3.

Let $n=\lceil(l+1)/3\rceil+1$.  When selecting a suitable order $\O$ as in Section~\ref{selectorder}, we now only require that $h(\O)\ge n$, and further modify Algorithm~\ref{alg1} as follows:
\begin{itemize}
\item
In Steps 4-6 we construct just $n$ polynomials $\Phi_l^{\gamma_2}(X,\sqrt[3]{j_i})$ of degree $l+1$.
\item
In Step 7 we interpolate $l+1$ polynomials $\phi^*_k$ of degree less than $n$ by writing $\phi_k=Y^c\phi^*_k(Y^3)$, with $c\in\{0,1,2\}$ satisfying $c+lk\equiv l+1\mod 3$.
\end{itemize}
This reduces the cost of all the significant components of Algorithm~\ref{alg1} by a factor of approximately 3.  
The reduction in the cost of the interpolations in Step~7 is actually greater than this, since its complexity is superlinear in the degree.

The total size of $\Phi_l^{\gamma_2}$ is approximately 9 times smaller than $\Phi_l$, and with the optimizations above, the time to compute it is effectively reduced by the same factor.  A small amount of additional time is required to compute the cube roots of the elements in $\Ell_\O(\Fp)$ and $\Ell_R(\Fp)$, but even this can be avoided.

Provided we have already computed $\Phi_{l'}^{\gamma_2}$ for some small values of $l'$ (specifically, for the primes $l_0$ and $l_1$ of Section~\ref{selectorder}), we may use these polynomials to directly enumerate sets $\Ell_\O^{\gamma_2}(\Fp)$ and $\Ell_R^{\gamma_2}(\Fp)$ containing the cube roots of the elements in $\Ell_\O(\Fp)$ and $\Ell_R(\Fp)$ respectively.  We need only compute the cube roots of $j_0$ and $j_0'$ as starting points.  This third optimization yields a small but useful improvement in the case of $\gamma_2$, and plays a critical role in the examples that follow.

\subsection{Recovering $\Phi_l$ from $\Phi_l^{\gamma_2}$}

Having computed $\Phi_l^{\gamma_2}$, we note that $\Phi_l$ may be computed via~\cite[Eq. 23]{Elkies:AtkinBirthday}:
\begin{equation}\label{gammaToj}
\Phi_l(X^3,Y^3) = \Phi_l^{\gamma_2}(X,Y)\Phi_l^{\gamma_2}(X,\omega Y)\Phi_l^{\gamma_2}(X,\omega^2 Y),
\end{equation}
where $\omega=e^{2\pi i/3}$.  For computation in $\Z$, or modulo $m$, it is more convenient to express $\Phi_l^{\gamma_2}$ in terms of polynomials $P_0,P_1,P_2\in\Z[X,Y]$ satisfying
\begin{equation}\label{gammaSplit}
\Phi_l^{\gamma_2}(X,Y) = P_0(X^3,Y^3)Y^b + P_1(X^3,Y^3)XY + P_2(X^3,Y^3)X^2Y^{2-b},
\end{equation}
where $b=2$ when $l\equiv 1\bmod 3$, and $b=0$ when $l\equiv 2\bmod 3$.  We then have
\begin{equation}\label{GammaToj}
\Phi_l = P_0^3 Y^b + (P_1^3 - 3P_0P_1P_2) XY + P_2^3 X^2 Y^{2-b}.
\end{equation}
Using Kronecker substitution and fast multiplication, it is possible to evaluate (\ref{GammaToj}) in time $O(l^3(\log l)^{2+\varepsilon})$, which is asymptotically faster than Algorithm~\ref{alg2}.  This suggests that we might more efficiently compute $\Phi_l$ by recovering it from $\Phi_l^{\gamma_2}$, but we do not find this to be true in practice: it actually takes longer to evaluate (\ref{GammaToj}) than it does to compute $\Phi_l$ directly.  This can be explained by two factors.  First, the $\Fp$-operations used in Algorithm~\ref{alg1} effectively have unit cost for word-size primes, making it faster than Theorem~\ref{main-thm} would suggest for all but very large $l$.  Secondly, the evaluation of (\ref{GammaToj}) becomes extremely memory intensive when $l$ is large.  However, if we are  computing $\Phi_l \bmod m$ with $\log m \ll l\log l$, then the time to apply (\ref{GammaToj}) modulo $m$ is negligible.  In this situation it is quite advantageous to derive $\Phi_l\bmod m$ from $\Phi_l^{\gamma_2}\bmod m$, as may be seen in Table~3 of Section~\ref{results}.

\subsection{Computing modular polynomials for the Weber $\mathfrak{f}$ function}\label{Weberf}
We now consider the classical Weber function~\cite[p.~114]{Weber:Algebra} defined by
$$
\mathfrak{f}(z)=\zeta_{48}^{-1}\frac{\eta((z+1)/2))}{\eta(z)},
$$
where $\zeta_{48}=e^{\frac{\pi i}{24}}$ and $\eta(z)$ is the Dedekind eta 
function.  This is a modular function of level 48 that satisfies 
$\gamma_2 = (\mathfrak{f}^{24}-16)/\mathfrak{f}^8$, see~\cite[p.~179]{Weber:Algebra}, thus we have
$$
\Psi^{\mathfrak{f}}(X,j) = (X^{24}-16)^3 - X^{24}j.
$$
Asymptotically, we expect to be able to reduce the height bound $B_l$ by a 
factor of $\deg_X\Psi^{\mathfrak{f}}/\deg_j\Psi^{\mathfrak{f}}=72$ when 
computing $\Phi_l^\mathfrak{f}$.  One can derive an explicit bound along the 
lines of~(\ref{gamma2height}), but this tends to overestimate the $O(l)$ term 
quite significantly, so in practice for large $l$ we use the heuristic bound
\begin{equation}\label{Weberheight}
B_l^\mathfrak{f} = \frac{1}{12}l\log l + \frac{1}{5}l\qquad\qquad(l>2400),
\end{equation}
which has been verified for every prime $l$ between 2400 and 10000.  The 
modular polynomial $\Phi_l^{\mathfrak{f}}$ is also sparse: the coefficient 
of $X^aY^b$ can be nonzero only when
\begin{equation}\label{Webersparse}
la+b\equiv l+1\bmod 24,
\end{equation}
as shown in~\cite[p.~266]{Weber:Algebra}.  Thus $\Phi_l^\mathfrak{f}$ is
roughly $72\cdot24=1728$ times smaller than $\Phi_l$.  By applying the
technique described above for $\gamma_2$, \emph{mutatis mutandis}, we can
actually compute $\Phi_l^\mathfrak{f}$ more than 1728 times faster than
$\Phi_l$ for large values of $l$, as may be seen in Table~2 of
Section~\ref{results}.  When applying Algorithm~\ref{alg2}, we now insist that the
order $\O$ have discriminant $D\equiv 1\bmod 8$ and $3\nmid D$, since
the Weber function yields class invariants in (at least) this case, see
~\cite{Gee:GeneratingClassFields}, for example.

Since $-\mathfrak{f}$ will also yield class invariants for~$\O$, the 
polynomial $\Psi^\mathfrak{f}(X,j_0)$ will always have at least {\it two\/}
roots in $\Fp$. The following lemma tells us that we can impose a congruence condition
on~$p$ to ensure that we have exactly two roots.

\begin{lemma}\label{WeberTwoRoots}
Let $p\equiv 11\bmod 12$ be prime and let $j_0$ be the $j$-invariant of an 
elliptic curve $E/\Fp$ with $\End(E)$ isomorphic to an imaginary quadratic 
order $\O$ with discriminant $D\equiv 1\bmod 8$ and $3\nmid D$.  
Then $\Psi^\mathfrak{f}(X,j_0)\in\Fp[X]$ has exactly two roots in $\Fp$, and 
these are of the form $x_0$ and $-x_0$.
\end{lemma}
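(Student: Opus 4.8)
The plan is to reduce the statement to a count of roots, translate that into a question about which roots of a cubic are squares in $\Fp$, and settle the latter through the splitting behaviour of $p$ in ray class fields (equivalently, via the matrix $A$ of Theorem~\ref{Shimura}).

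First the easy reductions. Since $\Psi^{\mathfrak f}(X,j_0)=(X^{24}-16)^3-X^{24}j_0$ is a polynomial in $X^{24}$ it is even in $X$, and $\Psi^{\mathfrak f}(0,j_0)=-16^3\ne 0$, so any root $x_0\in\Fp$ has $x_0\ne 0$, $-x_0\ne x_0$, and $-x_0$ is again a root; hence it suffices to prove there are exactly two roots in $\Fp$. Writing $Y=X^{24}$, a root $X\in\Fp$ gives a root $Y\in\Fp$ of $g(Y):=(Y-16)^3-Yj_0$, and conversely, because $p\equiv 11\bmod 12$ forces $\gcd(24,p-1)=2$, each nonzero square root $Y\in\Fp$ of $g$ is the $24$th power of exactly two elements of $\Fp$ while a nonsquare is of none. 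So the number of roots equals $2\cdot\#\{\,Y\in\Fp:g(Y)=0,\ Y\in(\Fp^{*})^{2}\,\}$, and I am reduced to showing $g$ has exactly one square root in $\Fp$.

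Now bring in CM theory. As $\O$ is imaginary quadratic, $E$ is ordinary and $p\nmid D$, so $j_0=j(E)$ being a root of $H_\O$ in $\Fp$ forces $p$ to split completely in the ring class field $K_\O$; fix $\mathfrak P\mid p$ of $K_\O$ with residue field $\Fp$. Because $D\equiv 1\bmod 8$ the prime $2$ splits in $\O$, so $(\O/2\O)^{*}$ is trivial and $K_{2,\O}=K_\O$. The function $\mathfrak f^{24}$ has level $2$, so it and its $\CC(j)$-conjugates take values in $K_{2,\O}=K_\O$ at a CM point $\tau$ with $\O=\Z[\tau]$; hence the three roots $Y_1,Y_2,Y_3$ of $g$ all lie in $\Fp$, and they are distinct since $\Psi^{\mathfrak f}$ is separable mod $p$. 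Using $D\equiv 1\bmod 8$ and $3\nmid D$, $\mathfrak f$ is a class invariant for $\O$, so $\mathfrak f(\tau)\in K_\O$ and we may take $Y_1=\mathfrak f(\tau)^{24}$; its reduction mod $\mathfrak P$ is the square of $\mathfrak f(\tau)^{12}\bmod\mathfrak P\in\Fp$. Since the product of the roots of $g$ is $2^{12}$, a square, $Y_2Y_3$ is a square mod $\mathfrak P$, so $Y_2$ and $Y_3$ are either both squares or both nonsquares in $\Fp$. It remains to exclude the former. Writing $\sqrt{Y_2}=\pm h_2(\tau)^{12}$, where $h_2$ is the $\CC(j)$-conjugate of $\mathfrak f$ with $h_2^{24}=Y_2$, the function $h_2^{12}$ has level $4$ and does not descend to level $2$ (already $\mathfrak f^{12}(z+2)=-\mathfrak f^{12}(z)$), so $h_2(\tau)^{12}\in K_{4,\O}$, and by the standard Shimura reciprocity computation for the Weber function (see \cite{Gee:GeneratingClassFields}) it generates $K_{4,\O}$ over $K_\O$; as $2$ splits, $[K_{4,\O}:K_\O]=\#\bigl((\O/4\O)^{*}/\{\pm1\}\bigr)=2$, so $K_\O(\sqrt{Y_2})=K_{4,\O}$. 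Finally, $p$ does not split completely in $K_{4,\O}$: that would need $\pi_p\equiv\pm1\bmod 4\O$, but from $4p=t^2-v^2D$ with $p\equiv 3\bmod 4$ and $D\equiv 1\bmod 8$ one gets $t=\tr(\pi_p)\equiv 0\bmod 4$, hence $\overline{\pi_p}\equiv-\pi_p\bmod 4\O$, while in $\O/4\O\cong \Z/4\Z\times\Z/4\Z$ complex conjugation interchanges the two factors, so $\pi_p\equiv\pm1$ would give $\overline{\pi_p}\equiv\pi_p$, contradicting $\overline{\pi_p}\equiv-\pi_p$ as $\pi_p$ is a unit mod $2$. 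Thus $\mathfrak P$ is inert in $K_{4,\O}$, so $Y_2$ and $Y_3$ are nonsquares in $\Fp$, $g$ has exactly the square root $Y_1$, and $\Psi^{\mathfrak f}(X,j_0)$ has exactly two roots in $\Fp$, of the form $x_0,-x_0$ with $x_0\ne 0$.

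\textbf{Main obstacle.} The delicate point is the middle of the last paragraph: showing that $h_2(\tau)^{12}$ genuinely generates $K_{4,\O}$ over $K_\O$, rather than already lying in $K_\O$ (which would produce six roots). This is the ``standard computation'' the paper refers to, and it is cleanest to carry out by applying Theorem~\ref{Shimura} directly: one evaluates the action of the matrix $A$ of $(\ref{Amatrix})$ on the $72$ conjugates of $\mathfrak f$, and the same dichotomy reappears as the assertion that, under the hypotheses $p\equiv 11\bmod 12$ and $D\equiv 1\bmod 8$, $3\nmid D$, the matrix $A$ fixes $\mathfrak f$ (a class invariant, with $p$ split in $K_\O$), interchanges $h_2$ and $h_3$, and therefore has exactly the two fixed points $\mathfrak f$ and $-\mathfrak f$ among all conjugates.
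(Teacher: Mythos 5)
Your proof takes a genuinely different, more concrete route than the paper, which simply invokes Theorem~\ref{Shimura} and cites the computation in Br\"oker's thesis. You first exploit the specific shape of $\Psi^{\mathfrak{f}}$: evenness gives the $\pm x_0$ pairing, and $\gcd(24,p-1)=2$ (from $p\equiv 11\bmod 12$) reduces the root count to $2\cdot\#\{Y: g(Y)=0,\ Y\in(\Fp^{*})^{2}\}$. You then use $K_{2,\O}=K_\O$ (from $D\equiv 1\bmod 8$) to land all three roots of $g$ in $\Fp$, the class-invariant property to see that $Y_1$ is a square, the product relation $Y_1Y_2Y_3=2^{12}$ to tie $Y_2,Y_3$ together, and a ray-class-field argument showing $\gp$ is inert in $K_{4,\O}$. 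This is more illuminating than the paper's one-liner and shows exactly where each hypothesis enters. Two remarks. First, your computation $t\equiv 0\bmod 4$ is correct (writing $t=2t'$, $v=2v'$ one gets $p=t'^2-v'^2D$ and then $t'$ even, $v'$ odd mod $4$), but there is a shortcut: if $\pi_p\equiv\pm 1\bmod 4\O$ then $p=\pi_p\bar\pi_p\equiv 1\bmod 4\O$, hence $p\equiv 1\bmod 4$ in $\Z$, directly contradicting $p\equiv 3\bmod 4$ without needing $\bar\pi_p\equiv-\pi_p$. Second, as you candidly flag in your closing paragraph, the crux --- that $h_2(\tau)^{12}$ actually leaves $K_\O$ rather than producing six roots --- is left as a citation to Shimura reciprocity, which is precisely the computation the paper relies on via Theorem~\ref{Shimura} and \cite[\S6.7]{Broker:Thesis}. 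So both proofs ultimately rest on the same unproved-here input; yours packages it as a statement about $K_{4,\O}/K_\O$ at the price of invoking level facts about $\mathfrak{f}^{24}$ and $\mathfrak{f}^{12}$ and the structure of $\O/4\O$, while the paper's is maximally terse.
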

\noindent
Note that if the lemma applies to $\O$, it also applies to the order $R$ of index $l$ in $\O$.

\begin{proof}
We only have to apply Theorem~\ref{Shimura}. The action of $A$ on the roots of 
$\Psi^\mathfrak{f}(X,j_0)$ is computed in~\cite[\S6.7]{Broker:Thesis}, and 
this yields the lemma.
\end{proof}

Given a $j$-invariant $j_0\in\Fp$ that corresponds to $j(\tau_0)\in K_\O$, we cannot readily determine which of the roots $x_0$ and $-x_0$ of $\Psi^\mathfrak{f}(X,j_0)$ actually corresponds to $\mathfrak{f}(\tau_0)$.  The functions $\mathfrak{f}$ and $-\mathfrak{f}$ yield distinct class invariants, but they share the same modular polynomials, since $\Phi_l^\mathfrak{f}(X,Y)=\Phi_l^\mathfrak{f}(-X,-Y) = \Phi_l^{-\mathfrak{f}}(X,Y)$, by (\ref{Webersparse}).

Thus for the initial $j_0$ obtained in Step 2 of Algorithm~\ref{alg1}, it does not matter whether we pick $x_0$ or $-x_0$ as a root of $\Psi^\mathfrak{f}(X,j_0)$, and we need not be concerned with making a consistent choice for each prime $p$.  However it is critical that while computing $\Phi_l^\mathfrak{f}\bmod p$ we make a consistent choice of sign for each $j$-invariant we convert to an ``$\mathfrak{f}$-invariant" (a root of $\Psi^\mathfrak{f}(X,j_i)\bmod p$).  This makes it impractical to enumerate $j$-invariants and convert them \emph{en masse}.  Instead, as described for $\gamma_2$ above, we use modular polynomials $\Phi_{l'}^\mathfrak{f}$ for small $l'$ to enumerate sets $\Ell_\O^{\mathfrak{f}}(\Fp)$ and $\Ell_R^{\mathfrak{f}}(\Fp)$ from starting points $x_0$ and $x_0'$ satisfying $\Psi^\mathfrak{f}(x_0,j_0)=\Psi^\mathfrak{f}(x_0',j_0')=0$.  This ensures that signs are chosen consistently within each of these sets; we only need to check that the sign choices for the two sets are consistent with each other.

To do so, we use the fact that the coefficient of $X^lY^l$ in $\Phi_l^\mathfrak{f}(X,Y)$ is $-1$.  This is shown for $\Phi_l$ in~\cite[\S69]{Weber:Algebra}, and the same argument applies to $\Phi_l^\mathfrak{f}$.  We modify Algorithm~\ref{alg1} to compute the coefficient of $X^lY^l$ in $\Phi_l^\mathfrak{f}\bmod p$ in between Steps 5 and 6.  We do this twice, switching the signs in $\Ell_\O^{\mathfrak{f}}(\Fp)$ the second time, and expect exactly one of these computations to yield $-1$, thereby determining a consistent choice of signs.  This test should be regarded as a heuristic, since we do not rule out the possibility that both choices produce $-1$.  However, in the course of extensive testing this has never happened, and we suspect that it cannot.  If it does occur, the algorithm can detect this and simply choose a different prime $p$.

\subsection{Eta quotients and Atkin modular functions}\label{miscfunctions}
For a prime $N$, let
$$
f_N(z) = N^{s/2}\left(\frac{\eta(Nz)}{\eta(z)}\right)^{s},
$$
where $s=24/\gcd(12,N-1)$.  These are modular functions of level $N$, and the polynomials $\Psi_N = \Psi^{f_N}$ that relate $f_N$ to $j$ are sometimes called canonical modular polynomials~\cite[p.~418]{Cohen:HECHECC}.  The functions $f_N$ are closely related to the functions $\mathfrak{w}_N^s$ considered in~\cite{Enge:GeneralizedWeberI}, and in fact $\Psi^{f_N}=\Psi^{\mathfrak{w}_N^s}$, so what follows applies to both. When $N$ is 2, 3, 5, 7, or 13, we have $\deg_j \Psi_N = 1$ and can adapt Algorithm~\ref{alg2} to compute polynomials $\Phi_l^{f_N}$ for odd primes $l\nmid N$.  We assume here that $N$ is also odd.

We have $\deg_X \Psi_N=N+1$, hence we can reduce the height bound $B_l$ by a
factor of approximately $N+1$.  When selecting a suitable order $\O$, we
require that $N$ is prime to the conductor and splits into prime ideals that
are distinct in $\Pic(\O)$. This assumption is stronger than we need, but it simplifies the implementation.
For the primes $p\in S$ we require that $N$ is prime to $v$, where $4p=t^2-v^2\disc(\O)$.

As shown in \cite{Muller:thesis}, the polynomial $\Psi_N(X,j_0)\bmod p$ has the same splitting type
as $\Phi_N(X,j_0)\bmod p$.  In particular, for $j_0\in\Ell_\O(\Fp)$ (or $j_0\in \Ell_R(\Fp)$) it has exactly two roots, say $x_1$, and $x_2$.  These correspond to $N$-isogenies as follows:
the $j$-invariants $j_1$ and $j_2$ of the two elliptic curves that are $N$-isogenous to $j_0$ are uniquely determined by the relations $\Psi_N(N^s/x_1,j_1)=0$ and $\Psi_N(N^s/x_2,j_2)$.  Here the transformation $x\mapsto N^s/x$ realizes the Atkin-Lehner involution on $f_N(z)$.

Starting points $x_0$ and $x_0'$ corresponding to $j_0$ and $j_0'$ are chosen as follows.  Let $x_0'$ be a root of $\Psi_N(X,j_0')$, chosen arbitrarily, and let $j_1'$ be determined by $\Psi_N(N^s/x_0',j_1')=0$.  We then use V\'elu's formulas to obtain the $j$-invariant $j_1$ of the  elliptic curve that is $l$-isogenous to $j_1'$ (there is exactly one and it lies in $\Ell_\O(\Fp)$, since $j_1'\in\Ell_R(\Fp)$ is on the floor of its $l$-volcano).  Finally, $x_0$ is uniquely determined by the constraints $\Psi_N(x_0,j_0)=0$ and $\Psi_N(N^s/x_0,j_1)=0$.

We next consider double eta-quotients~\cite{Enge:DoubleEtaQuotient,EngeSchertz:CompositeLevel} of
composite level $N=p_1p_2$:
$$
\mathfrak{w}_{p_1,p_2}^s(z) = \left(\frac{\eta(\frac{z}{p_1})\eta(\frac{z}{p_2})}{\eta(\frac{z}{p_1p_2})\eta(z)}\right)^s,
$$
where $p_1\ne p_2$ are primes and $s=24/\gcd(24,(p_1-1)(p_2-1))$.  For $(p_1,p_2)$ in
$$
\bigl\{(2,3),(2,5),(2,7),(2,13),(3,5),(3,7),(3,13),(5,7)\bigr\},
$$
the polynomial $\Psi_{p_1,p_2}=\Psi^{\mathfrak{w}_{p_1,p_2}^s}$ has degree 2 in
$j$ and we can compute $\Phi_l^{\mathfrak{w}_{p_1,p_2}^s}$ for odd primes
$l\nmid N$.  Our restrictions on $\O$ are analogous to those for $f_N$ or
$\mathfrak{w}_N^s$: we require that $N$ is prime to the conductor and that both
$p_1$ and $p_2$ split into distinct prime ideals in $\Pic(\O)$.  Our
requirements for $p\in S$ are as above.  We can reduce the height bound $B_l$ by a
factor of approximately $(p_1+1)(p_2+1)/2$.

With the double eta-quotients, the polynomial $\Psi_{p_1,p_2}(X,j_0)$ has four
roots, corresponding to four distinct isogenies of (composite) degree $N$.
Each root $x_i$ uniquely determines the $j$-invariant of a curve $N$-isogenous
to $E/\Fp$ as the unique root of $\Psi_{p_1,p_2}(x_i,J)/(J-j_0)\in\Fp[J]$.  The
double eta-quotients are invariant under the Atkin-Lehner involution, so we
need not transform $x_i$.  With this understanding, the procedure for selecting
$x_0$ and $x_0'$ is as above.

In some cases one can obtain smaller modular polynomials by considering
suitable roots of the functions defined above.  For example, a sixth root of
$f_3$ also yields class invariants (this is shown for $\mathfrak{w}_3^2$
in~\cite{Enge:GeneralizedWeberI}), and the corresponding modular polynomials
are sparser and of lower height (by a factor of 6).

Our algorithm also applies to the Atkin modular functions, which we denote
$A_N$.  These are (optimal) modular functions for $X_0^+(N)$ invariant under
the Atkin-Lehner involution, see
\cite{Elkies:AtkinBirthday,Morain:PointCounting} for further details.  The
polynomials $\Psi^{A_N}$ are known as Atkin modular polynomials, and are
available in computer algebra systems such as Magma~\cite{Magma} and Sage
\cite{SAGE}.  For primes $N<32$, and also $N$ in the set $\{41,47,59,71\}$, we
have $\deg_j\Psi^{A_N} = 2$ and can compute polynomials $\Phi_l^{A_N}$ for odd
primes $l\ne N$.  This is done in essentially the same way as with the double
eta-quotients, except that now $N$ is prime and $\Psi^{A_N}(X,j_0)$ has just
two roots, rather than four.  For these $A_N$, the height bound can be reduced
by a factor of approximately $(N+1)/2$.

Finally, we note an alternative approach applicable to both eta-quotients and
the Atkin modular functions.  If we choose $\O$ so that the prime factors of
$N$ are all ramified, then there is actually a unique $x_0\in\Fp$ corresponding
to each $j_0$ in $\Ell_\O(\Fp)$ and $\Ell_R(\Fp)$, that is, $\Psi^g(X,j_0)$ has exactly one root
in $\Fp$.  In this scenario we can
simply enumerate $j$-invariants as usual and then replace each $j_i$ with a
corresponding $x_i$.  This is not as
efficient and places stricter requirements on $\O$, but it allows us to compute
$\Phi_l^g$ without needing to know $\Phi_{l'}^g$ for any $l'$.  This provides a
convenient way to ``bootstrap" the process.  In fact all of the modular
polynomials $\Phi_l^g$ we have considered can eventually be obtained via
Algorithm~\ref{alg2}, starting from the polynomials $\Psi^g$ and $\Phi_2$.

\section{Computational results}\label{results}
We have applied our algorithm to compute polynomials $\Phi_l^g$ for all the modular functions discussed in Section~\ref{OtherFunctions} and every applicable $l$ up to 1000.  For the functions $j$, $\gamma_2$, and $\mathfrak{f}$ we have gone further, and present details of these computations here.

\subsection{Implementation} The algorithms described in this paper were implemented using the GNU C/C++ compiler~\cite{GNU} and the GMP library~\cite{GMP} on a 64-bit Linux platform.  Multiplication of large polynomials is handled by the 
{\tt zn\_poly} library developed by Harvey~\cite{Harvey:KroneckerSubstitution,Harvey:zn_poly}.

The hardware platform included four 3.0 GHz AMD Phenom II processors, each with four cores and 8GB of memory.  Up to 16 cores were used in the larger tests, with essentially linear speedup.  For consistency we report total CPU times, noting that in a multi-threaded implementation, disk and network I/O can be overlapped with CPU activity so that all computations are CPU bound.

As a practical optimization, we do not use the Hilbert class polynomial $H_\O$ in Step~1 of Algorithm~\ref{alg2}.  Instead, we compute the minimal polynomial of some more favorable class invariant, as described in~\cite{EngeSutherland:CRTClassInvariants}, which is then used to obtain a $j$-invariant.  Additionally, as noted in Section~\ref{selectorder}, it suffices to compute a class polynomial for the maximal order containing $\O$.  With these optimizations the time spent computing class polynomials is completely negligible (well under one second).

Another important optimization is the use of polynomial gcds to accelerate root-finding when walking paths in the isogeny graph, a technique developed in~\cite[\S 2]{EngeSutherland:CRTClassInvariants}.  This greatly accelerates the enumeration of the sets $\Ell_\O(\Fp)$ and $\Ell_R(\Fp)$ in Steps 3 and 5 of Algorithm~\ref{alg2}.  As a result, most of the computation (typically over 75\%) is spent interpolating polynomials in Steps 6 and 7.

\subsection{Computations over $\ZZ$}
Tables 1 and 2 provide performance data for computations of $\Phi_l$ and $\Phi_l^\mathfrak{f}$ using Algorithm~\ref{alg2}.  For each $l$ we list:
\begin{itemize}
\item The discriminant $D$ of the suitable order $\O$.
\item The number of CRT primes $n=\#S$ used.
\item The height bound $B_l$ in bits and the actual bit-size $b_l$ of the largest coefficient.
\item The total size of $\Phi_l$ (resp. $\Phi_l^\mathfrak{f}$) in megabytes (1MB = $10^6$ bytes), computed as the sum of the coefficient sizes, with symmetric terms counted only once.
\item The total CPU time, in seconds.  This includes the time to select $\O$.
\item The throughput, defined as the total size divided by the total CPU time.
\end{itemize}

\begin{table}
\begin{center}
\begin{tabular}{@{}rrrrrrrr@{}}
$l$ &$|D|$&$n$&$B_l$&$b_l$&size (MB)&time (s)&MB/s\\
\midrule
 101 &    216407 &  184 &   6511 &   5751 &   2.65 &   2.25 & 1.18\\
 211 &    393047 &  369 &  14949 &  13359 &   27.6 &   14.4 & 1.92\\
 307 &    837407 &  531 &  22748 &  20483 &   90.5 &   51.0 & 1.78\\
 401 &    626431 &  725 &  30640 &  27642 &    211 &    130 & 1.62\\
 503 &   3076175 &  870 &  39421 &  35686 &    431 &    264 & 1.63\\
 601 &    461351 & 1011 &  48027 &  43542 &    755 &    485 & 1.56\\
 701 &   1254871 & 1229 &  56953 &  51731 &   1227 &    863 & 1.42\\
 809 &    916599 & 1376 &  66731 &  60743 &   1926 &   1410 & 1.37\\
 907 &    986855 & 1517 &  75712 &  69017 &   2759 &   2010 & 1.37\\
1009 &   2871983 & 1728 &  85157 &  77653 &   3857 &   2910 & 1.32\\
2003 &  91696103 & 3410 & 180941 & 166095 &  33120 &  31800 & 1.04\\
3001 & 248329639 & 5122 & 281635 & 259272 & 117256 & 143000 & 0.82\\
4001 &  72135279 & 6939 & 385300 & 355707 & 287783 & 363000 & 0.79 \\
5003 &  67243191 & 8373 & 491355 & 454429 & 577740 & 749000 & 0.77 \\
\bottomrule
\end{tabular}
\\
\vspace{6pt}
\textsc{Table} 1.  Computations of $\Phi_l$ over $\Z$.\\
\end{center}
\end{table}

In the last column of Table 1 one can see the quasilinear performance of Algorithm~\ref{alg2} as a function of the size of $\Phi_l$, and the constant factors appear to be advantageous relative to other algorithms.  For example, computing $\Phi_{1009}$ with the evaluation/interpolation algorithm of~\cite{Enge:ModularPolynomials} uses approximately 100000 CPU seconds (scaled to our hardware platform), while Algorithm~\ref{alg2} needs less than 3000. 

\begin{table}
\begin{center}
\begin{tabular}{@{}rrrrrrrr@{}}
$l$ &$|D|$&$n$&$B_l$&$b_l$&size (MB)&time (s)&MB/s\\
\midrule
 1009 &     1391 &   33 &   1275 &   1099 &   2.34 &   1.59 & 1.47\\
 2003 &    37231 &   58 &   2542 &   2271 &   19.5 &   10.7 & 1.81\\
 3001 &    88879 &   88 &   3822 &   3611 &   69.6 &   47.7 & 1.46\\
 4001 &    53191 &  112 &   5201 &   4801 &    167 &    116 & 1.45\\
 5003 &    30959 &  136 &   6613 &   6228 &    339 &    241 & 1.41\\
 6007 &   463039 &  170 &   8052 &   7530 &    595 &    493 & 1.21\\
 7001 &   150631 &  192 &   9496 &   8876 &    957 &    701 & 1.37\\
 8009 &   315031 &  220 &  10979 &  10292 &   1453 &   1200 & 1.21\\
 9001 &   179159 &  240 &  12453 &  11974 &   2123 &   1790 & 1.18\\
10009 &   207919 &  265 &  13964 &  13453 &   2953 &   2630 & 1.12\\
20011 &  1114879 &  537 &  29485 &  27860 &  24942 &  27600 & 0.90\\
30011 &  2890639 &  795 &  45649 &  43304 &  87660 & 123000 & 0.71\\
40009 & 22309439 & 1032 &  62210 &  59439 & 214273 & 335000 & 0.64\\
50021 & 37016119 & 1316 &  79116 &  78077 & 508571 & 677000 & 0.75\\
60013 & 27334823 & 1594 &  96165 &  91733 & 747563 &1150000 & 0.65\\
\bottomrule
\end{tabular}
\\
\vspace{6pt}
\textsc{Table} 2.  Computations of $\Phi_l^{\mathfrak{f}}$ over $\Z$.\\
\vspace{2pt}
\end{center}
\end{table}

The first five rows of Table 2 may be compared to the corresponding rows of Table 1 to see the performance advantage gained when computing modular polynomials for the Weber $\mathfrak{f}$ function rather than $j$.  As expected, these polynomials are approximately 1728 times smaller, and the speedup achieved by Algorithm~\ref{alg2} is even better; we already achieve a speedup of around 1800 when $l=1009$, and this increases to to over 3000 when $l=5003$.  This can be explained by the superlinear complexity of interpolation, as well as the superior cache utilization achieved by condensing the sparse coefficients of $\Phi_l^\mathfrak{f}$, as described in Section~\ref{ComputingGamma2}.

As noted in Section~\ref{Weberf}, we used a heuristic height bound for the computations in Table~2.  The gap between the values of $b_l$ and $B_l$ in each case gives us high confidence in the results (the probability of this occurring by chance is negligible).

\subsection{Computations modulo $\boldsymbol{m}$}

Table~3 gives timings for computations of $\Phi_l$ modulo 256-bit and 1024-bit primes $m$.  The values of $m$ are arbitrary, and, in particular, they are not of a form suitable for direct computation with Algorithm~\ref{alg1}.  Instead, Algorithm~\ref{alg2} derives $\Phi_l\bmod m$ from the computations of $\Phi_l\bmod p$, for $p\in S$, using the explicit CRT.  The same set $S$ is used as when computing $\Phi_l$ over $\Z$, so the running time is largely independent of $m$, but using the explicit CRT yields a noticeable speedup when $\log m$ is significantly smaller than $6l\log l$.  For example, when $l=1009$ it takes approximately 2300 seconds to compute $\Phi_l\bmod m$, for the $m$ listed in Table~3, versus about 2900 seconds to compute $\Phi_l$ over $\Z$.

In addition to computing $\Phi_l\bmod m$ directly, we may also obtain $\Phi_l\bmod m$ by computing $\Phi_l^{\gamma_2}\bmod m$ and applying (\ref{GammaToj}), as discussed in Section~\ref{ComputingGamma2}.  The time to compute $\Phi_l^{\gamma_2}\bmod m$ is essentially independent of $m$, but the time to apply (\ref{GammaToj}) is not.  Even so, for the 256-bit and 1024-bit $m$ that we used, computing $\Phi_l\bmod m$ in this fashion is much faster than computing $\Phi_l\bmod m$ directly; for $l=1009$ we achieve times of 223 and 403 seconds, respectively.  As with $\Phi_l^{\mathfrak{f}}$, this speedup improves superlinearly, and for large~$l$ it exceeds the expected factor of 9.

When computing $\Phi_l^{\gamma_2}\bmod m$ we used the height bound $B_l^{\gamma_2}=2l\log l + 8l$ given by (\ref{gamma2height}).  The timings in Table~3 would be further improved if the heuristic bound $B_l^{\gamma_2}=2l\log l + 4l$ were used instead.

The computations listed in Tables 1 and 2 were practically limited by space, not time.  The largest computations took only a day or two when run on 16 cores, but required nearly a terabyte of disk storage.  However when computing $\Phi_l\bmod m$, we can handle larger values of $l$ without using an excessive amount of space.  When $l=20011$, for example, the total size of $\Phi_l$ is over 30 terabytes, but we are able to compute $\Phi_l$ modulo a 256-bit integer $m$ using less than 10 gigabytes.

\begin{table}
\begin{center}
\begin{tabular}{@{}rrrrrrrrrr@{}}
&&\multicolumn{3}{c}{$m = 2^{256}-189$}&&\multicolumn{3}{c}{$m = 2^{1024}-105$}\\
\cmidrule(r){3-5}\cmidrule(r){7-9}
$l$&&$\Phi_l$&$\Phi_l^{\gamma_2}$&$\Phi_l^*$&\hspace{12pt}&$\Phi_l$&$\Phi_l^{\gamma_2}$&$\Phi_l^*$\\
\midrule
  101 &&    2.12 &   0.16 &   0.47 &&    2.16 &   0.17 &   1.53 \\
  211 &&    12.4 &   1.64 &   3.26 &&    12.7 &   1.68 &   7.95 \\
  307 &&    43.3 &   4.82 &   8.34 &&    44.0 &   4.93 &   19.3 \\
  401 &&     109 &   10.9 &   17.9 &&     111 &   11.1 &   38.0 \\
  503 &&     215 &   23.3 &   34.0 &&     219 &   23.8 &   66.4 \\
  601 &&     390 &   40.5 &   55.8 &&     395 &   41.4 &    110 \\
  701 &&     695 &   69.1 &   90.2 &&     703 &   70.3 &    158 \\
  809 &&    1130 &    105 &    134 &&    1150 &    107 &    222 \\
  907 &&    1590 &    158 &    194 &&    1600 &    160 &    306 \\
 1009 &&    2300 &    223 &    267 &&    2320 &    225 &    403 \\
 2003 &&   23900 &   2400 &   2590 &&   24100 &   2440 &   3210 \\
 3001 &&  106000 &   9250 &   9650 &&  107000 &   9360 &  11200 \\
 4001 &&  283000 &  25100 &  25900 &&  287000 &  25400 &  28600 \\
 5003 &&  647000 &  57000 &  58300 &&  653000 &  60200 &  65700 \\
10009 && 7180000 & 681000 & 687000 && 7320000 & 688000 & 713000\\
\bottomrule
\end{tabular}
\\
\vspace{6pt}
\textsc{Table} 3.  Computations of $\Phi_l$ and $\Phi_l^{\gamma_2}$ modulo $m$.\\
\vspace{2pt}
\footnotesize
Columns $\Phi_l^*$ list the total time to obtain $\Phi_l$ by computing $\Phi_l^{\gamma_2}$ and applying (\ref{GammaToj}).
\normalsize
\end{center}
\end{table}

\section*{Acknowledgments}
We thank David Harvey for the {\tt zn\_poly} library, and Andreas Enge for providing timings for his evaluation/interpolation algorithm.
We also thank Igor Shparlinski for his helpful comments on an early draft of this paper.

\section*{Appendix}
\begin{lemma}\label{HWbound}
Let $c$ be a real number greater than $c_0=\log_2 e\approx 1.44$.
Let $\pi_c(x)$ count the integers $n\in[3,x]$ for which $\omega(n) \ge c\log\log n$.
Then $\pi_c(x) = O \bigl( x (\log x)^{1-c/c_0} \bigr).$
\end{lemma}
\begin{proof}
Let $d(n)$ count the divisors of $n$.  From \cite[Thm.~320]{Hardy:NumberTheory} we have
\[
\sum_{n\le x}2^{\omega(n)}\le \sum_{n\le x}d(n) = x\log x + O(x).
\]
At most $O\bigl(x(\log x)^{1-c/c_0}\bigr)$ terms on the LHS have $n\ge \sqrt{x}$ and $\omega(n)\ge c\log\log x$.
Applying $\sqrt{x}=O\bigl(x(\log x)^{1-c/c_0}\bigr)$ and $\frac{\log\log\sqrt{x}}{\log\log x} = 1+o(1)$ yields the lemma.
\end{proof}

\bibliographystyle{amsplain}

\begin{thebibliography}{10}

\bibitem{Bach:ERHbounds}
Eric Bach, \emph{Explicit bounds for primality testing and related problems},
  Mathematics of Computation \textbf{55} (1990), no.~191, 355--380.

\bibitem{Belding:HilbertClassPolynomial}
Juliana Belding, Reinier Br\"{o}ker, Andreas Enge, and Kristin Lauter,
  \emph{Computing {H}ilbert class polynomials}, Algorithmic Number Theory
  Symposium--{ANTS VIII} (A.~J. {van der Poorten} and A.~Stein, eds.), Lecture
  Notes in Computer Science, vol. 5011, Springer, 2008, pp.~282--295.

\bibitem{Berlekamp:PolyFactoringLargeFF}
Elwyn~R. Berlekamp, \emph{Factoring polynomials over large finite fields},
  Mathematics of Computation \textbf{24} (1970), no.~111, 713--735.

\bibitem{Bernstein:ModularExponentiation}
Daniel~J. Bernstein, \emph{Modular exponentiation via the explicit {C}hinese
  {R}emainder {T}heorem}, Mathematics of Computation \textbf{76} (2007),
  443--454.

\bibitem{Biehl:FormReductionComplexity}
Ingrid Biehl and Johannes Buchmann, \emph{An analysis of the reduction
  algorithms for binary quadratic forms}, Voronoi's Impact on Modern Science
  (P.~Engel and H.~Syta, eds.), Institute of Mathematics, Kyiv, 1998, available
  at \url{http://www.cdc.informatik.tu-darmstadt.de/reports/TR/TI-97-26.ps.gz},
  pp.~71--98.

\bibitem{BissonSutherland:Endomorphism}
Gaetan Bisson and Andrew~V. Sutherland, \emph{Computing the endomorphism ring
  of an ordinary elliptic curve over a finite field}, Journal of Number Theory
  (2009), to appear, \url{http://arxiv.org/abs/0902.4670}.

\bibitem{Blake:EllipticCurves}
Ian Blake, Gadiel Seroussi, and Nigel Smart, \emph{Elliptic curves in
  cryptography}, London Mathematical Society Lecture Note Series, vol. 265,
  Cambridge University Press, 1999.

\bibitem{Blake:ModularPolynomials}
Ian~F. Blake, J\'{a}nos~A. Csirik, Michael Rubinstein, and Gadiel Seroussi,
  \emph{On the computation of modular polynomials for elliptic curves}, Tech.
  report, Hewlett-Packard Laboratories, 1999,
  \url{http://www.math.uwaterloo.ca/~mrubinst/publications/publications.html}.

\bibitem{Bostan:FastIsogenies}
Alin Bostan, Bruno Salvy, Fran\c{c}ois Morain, and \'{E}ric Schost, \emph{Fast
  algorithms for computing isogenies between elliptic curves}, Mathematics of
  Computation \textbf{77} (2008), 1755--1778.

\bibitem{Broker:Thesis}
Reinier Br\"{o}ker, \emph{Constructing elliptic curves of prescribed order},
  {P}h{D} thesis, Universiteit Leiden, 2006.

\bibitem{Broker:pAdicClassPolynomial}
\bysame, \emph{A $p$-adic algorithm to compute the {H}ilbert class polynomial},
  Mathematics of Computation \textbf{77} (2008), 2417--2435.

\bibitem{Broker:pAdicClassInvariants}
\bysame, \emph{$p$-adic class invariants}, LMS Journal of Computation and
  Mathematics (2010), to appear.

\bibitem{BrokerSutherland:PhiHeightBound}
Reinier Br{\"o}ker and Andrew~V. Sutherland, \emph{An explicit height bound for
  the classical modular polynomial}, Ramanujan Journal \textbf{22} (2010),
  293--313.

\bibitem{Buchmann:BinaryQuadraticForms}
Johannes Buchmann and Ulrich Vollmer, \emph{Binary quadratic forms: an
  algorithmic approach}, Algorithms and Computations in Mathematics, vol.~20,
  Springer, 2007.

\bibitem{Magma}
J.J. Cannon and W.~Bosma (Eds.), \emph{Handbook of {Magma} functions}, 2.15
  ed., 2008, available at
  \url{http://magma.maths.usyd.edu.au/magma/htmlhelp/MAGMA.htm}.

\bibitem{Castagnos:NICECryptanalysis}
Guilhem Castagnos and Fabien Laguillaumie, \emph{On the security of
  cryptosystems with quadratic decryption: the nicest cryptanalysis}, Advances
  in Cryptology: EUROCRYPT 2009 (A.~Joux, ed.), Lecture Notes in Computer
  Science, vol. 5479, Springer, 2009, pp.~260--277.

\bibitem{CharlesLauter:ModPoly}
Denis Charles and Kristin Lauter, \emph{Computing modular polynomials}, LMS
  Journal of Computation and Mathematics \textbf{8} (2005), 195--204.

\bibitem{Cohen:CANT2}
Henri Cohen, \emph{Advanced topics in computational number theory}, Springer,
  2000.

\bibitem{Cohen:HECHECC}
Henri Cohen and Gerhard~Frey et~al., \emph{Handbook of elliptic and
  hyperelliptic curve cryptography}, Chapman and Hall, 2006.

\bibitem{CohenPaula:ModularPolynomials}
Paula Cohen, \emph{On the coefficients of the transformation polynomials for
  the elliptic modular function}, Math. Proc. of the Cambridge Philosophical
  Society \textbf{95} (1984), 389--402.

\bibitem{Cox:ComplexMultiplication}
David~A. Cox, \emph{Primes of the form $x^2+ny^2$: {F}ermat, class field
  theory, and complex multiplication}, John Wiley and Sons, 1989.

\bibitem{Elkies:AtkinBirthday}
Noam~D. Elkies, \emph{Elliptic and modular curves over finite fields and
  related computational issues}, Computational Perspectives on Number Theory
  (D.~A. Buell and J.~T. Teitelbaum, eds.), Studies in Advanced Mathematics,
  vol.~7, AMS, 1998, pp.~21--76.

\bibitem{Enge:FloatingPoint}
Andreas Enge, \emph{The complexity of class polynomial computation via floating
  point approximations}, Mathematics of Computation \textbf{78} (2009),
  1089--1107.

\bibitem{Enge:ModularPolynomials}
\bysame, \emph{Computing modular polynomials in quasi-linear time}, Mathematics
  of Computation \textbf{78} (2009), 1809--1824.

\bibitem{Enge:GeneralizedWeberI}
Andreas Enge and Francois Morain, \emph{Generalized {W}eber functions {I}},
  2009, \url{http://arxiv.org/abs/0905.3250}.

\bibitem{Enge:DoubleEtaQuotient}
Andreas Enge and Reinhard Schertz, \emph{Constructing elliptic curves over
  finite fields using double eta-quotients}, Journal de Th\'eorie des Nombres
  de Bordeaux \textbf{16} (2004), no.~3, 555--568.

\bibitem{EngeSchertz:CompositeLevel}
\bysame, \emph{Modular curves of composite level}, Acta Arithmetica
  \textbf{118} (2005), no.~2, 129--141.

\bibitem{EngeSutherland:CRTClassInvariants}
Andreas Enge and Andrew~V. Sutherland, \emph{Class invariants for the {CRT}
  method}, Algorithmic Number Theory Symposium--{ANTS IX} (G.~Hanrot,
  F.~Morain, and E.~Thom\'{e}, eds.), Lecture Notes in Computer Science, vol.
  6197, Springer-Verlag, 2010, pp.~142--156.

\bibitem{GNU}
Free~Software Foundation, \emph{{GNU} compiler collection}, January 2010,
  version 4.4.3, available at \url{http://gcc.gnu.org/}.

\bibitem{Fouquet:IsogenyVolcanoes}
Mireille Fouquet and Fran\c{c}ois Morain, \emph{Isogeny volcanoes and the {SEA}
  algorithm}, Algorithmic Number Theory Symposium--{ANTS V} (C.~Fieker and
  D.~R. Kohel, eds.), Lecture Notes in Computer Science, vol. 2369, Springer,
  2002, pp.~276--291.

\bibitem{Galbraith:GHSattack}
Steven~D. Galbraith, Florian Hess, and Nigel~P. Smart, \emph{Extending the
  {GHS} {W}eil descent attack}, Advances in Cryptology---{EUROCRYPT 2002},
  Lecture Notes in Computer Science, vol. 2332, Springer, 2002, pp.~29--44.

\bibitem{Gee:GeneratingClassFields}
Alice Gee and Peter Stevenhagen, \emph{Generating class fields with {S}himura
  reciprocity}, Algorithmic Number Theory Symposium--{ANTS III}, Lecture Notes
  in Computer Science, vol. 1423, Springer, 1998, pp.~442--453.

\bibitem{GMP}
Torbj\"{o}rn {Granlund et al.}, \emph{{GNU} multiple precision arithmetic
  library}, September 2010, version 5.0.1, available at
  \url{http://gmplib.org/}.

\bibitem{Hardy:NumberTheory}
Godfrey~H. Hardy and Edward~M. Wright, \emph{An introduction to the theory of
  numbers}, fifth ed., Oxford Science Publications, 1979.

\bibitem{Harvey:zn_poly}
David Harvey, \emph{$\text{zn\_poly}$: a library for polynomial arithmetic},
  2008, version 0.9, \url{http://cims.nyu.edu/~harvey/zn_poly}.

\bibitem{Harvey:KroneckerSubstitution}
\bysame, \emph{Faster polynomial multiplication via multipoint {K}ronecker
  substitution}, Journal of Symbolic Computation \textbf{44} (2009), no.~10,
  1502--1510.

\bibitem{Herrmann:FourierCoefficients}
Oskar Herrmann, \emph{Uber die {B}erechnung der {F}ourierkoeffizienten der
  {F}unktion $j(\tau)$}, J. Reine Agnew. Math. \textbf{274/275} (1975),
  187--195.

\bibitem{Holt:CGTHandbook}
Derek~F. Holt, Bettina Eick, and Eamonn~A. O'Brien, \emph{Handbook of
  computational group theory}, CRC Press, 2005.

\bibitem{Ito:ModularEquation}
Hideji Ito, \emph{Computation of the modular equation}, Proc. Japan Acad. Ser.
  A \textbf{71} (1995), 48--50.

\bibitem{Jarden:Density}
Moshe Jarden, \emph{Transfer principles for finite and p-adic fields}, Nieuw
  Archief voor Wiskunde \textbf{3} (1980), no.~28, 139--158,
  \url{http://www.tau.ac.il/~jarden/Articles/paper27.pdf}.

\bibitem{Kaltofen:ModularEquation}
Erich Kaltofen and Noriko Yui, \emph{On the modular equation of order $11$},
  Proceedings of the 1984 MACSYMA Users Conference, 1984, pp.~472--485.

\bibitem{Kohel:thesis}
David Kohel, \emph{Endomorphism rings of elliptic curves over finite fields},
  {PhD} thesis, University of California at Berkeley, 1996.

\bibitem{Lagarias:Chebotarev}
J.~C. Lagarias and A.~M. Odlyzko, \emph{Effective versions of the {C}hebotarev
  density theorem}, Algebraic number fields: {$L$}-functions and {G}alois
  properties (Proc. Sympos., Univ. Durham, Duram, 1975), Academic Press, 1977,
  pp.~409--464.

\bibitem{Lang:EllipticFunctions}
Serge Lang, \emph{Elliptic functions}, second ed., Springer-Verlag, 1987.

\bibitem{LMMS:PointCounting}
Frank Lehmann, Markus Maurer, Volker M\"{u}ller, and Victor Shoup,
  \emph{Counting the number of points on elliptic curves over finite fields of
  characteristic greater than three}, Algorithmic Number Theory
  Symposium--{ANTS I} (L.~M. Adleman and M.-D. Huang, eds.), Lecture Notes in
  Computer Science, vol. 877, 1994, pp.~60--70.

\bibitem{Lenstra:RigorousFactoring}
Hendrik~W. {Lenstra, Jr.} and Carl Pomerance, \emph{A rigorous time bound for
  factoring integers}, Journal of the American Mathematical Society \textbf{5}
  (1992), no.~3, 483--516.

\bibitem{Morain:PointCounting}
Fran\c{c}ois Morain, \emph{Calcul du nombre de points sur une courbe elliptique
  dans un corps fini: aspects algorithmiques}, Journal de Th\'{e}orie des
  Nombres de Bordeaux \textbf{7} (1995), no.~1, 111--138.

\bibitem{Muller:thesis}
Volker M\"uller, \emph{Ein {A}lgorithmus zur {B}estimmung der {P}unktanzahl
  elliptischer {K}urven \"uber endlichen {K}\"orpern der {C}harakteristik
  gr\"o\ss er drei}, {PhD} thesis, Universit\"at des Saarlandes, 1995.

\bibitem{Neukirch:AlgebraicNumberTheory}
J\"{u}rgen Neukirch, \emph{Algebraic number theory}, Springer, 1999.

\bibitem{Schonhage:Multiplication}
Arnold Sch\"{o}nhage and Volker Strassen, \emph{Schnelle {M}ultiplikation
  gro\ss{}er {Z}ahlen}, Computing \textbf{7} (1971), 281--292.

\bibitem{Schoof:ECPointCounting2}
Ren\'{e} Schoof, \emph{Counting points on elliptic curves over finite fields},
  Journal de Th\'{e}orie des Nombres de Bordeaux \textbf{7} (1995), 219--254.

\bibitem{SAGE}
William Stein and David Joyner, \emph{{SAGE}: System for {A}lgebra and
  {G}eometry {E}xperimentation}, Communications in Computer Algebra (SIGSAM
  Bulletin) (2005), 61--64.

\bibitem{Stevenhagen:NumberRings}
Peter Stevenhagen, \emph{The arithmetic of number rings}, Algorithmic Number
  Theory: Lattices, Number Fields, and Cryptography (J.P. Buhler and
  P.~Stevenhagen, eds.), Mathematical Sciences Research Institute Publications,
  vol.~44, Cambridge University Press, 2008.

\bibitem{Sutherland:HilbertClassPolynomials}
Andrew~V. Sutherland, \emph{Computing {H}ilbert class polynomials with the
  {C}hinese {R}emainder {T}heorem}, Mathematics of Computation \textbf{80}
  (2011), 501--538.

\bibitem{Velu:Isogenies}
Jacques V\'{e}lu, \emph{Isog\'{e}nies entre courbes elliptiques}, Comptes
  Rendus Hebdomadaires des S\'{e}ances de l'Acad\'{e}mie des Sciences,
  S\'{e}ries A et B \textbf{273} (1971), 238--241.

\bibitem{Gathen:ComputerAlgebra}
Joachim von~zur Gathen and J\"{u}rgen Gerhard, \emph{Modern computer algebra},
  second ed., Cambridge University Press, 2003.

\bibitem{Washington:EllipticCurves}
Lawrence~C. Washington, \emph{Elliptic curves: Number theory and cryptography},
  second ed., CRC Press, 2008.

\bibitem{Weber:Algebra}
Heinrich Weber, \emph{Lehrbuch der algebra}, third ed., vol. III, Chelsea,
  1961.

\end{thebibliography}
\providecommand{\bysame}{\leavevmode\hbox to3em{\hrulefill}\thinspace}
\providecommand{\MR}{\relax\ifhmode\unskip\space\fi MR }
\providecommand{\MRhref}[2]{%
  \href{http://www.ams.org/mathscinet-getitem?mr=#1}{#2}
}
\providecommand{\href}[2]{#2}

\end{document}